\definecolor{darkgreen}{rgb}{0.0, 0.7, 0.0}
\definecolor{cyan}{cmyk}{1,0,0,0}
\newcommand{\bdg}{\begin{dg}}
\newcommand{\edg}{\end{dg}}
\newtheorem{tm}{Theorem}[subsection]
\newtheorem{lm}[tm]{Lemma}
\newtheorem{pr}[tm]{Proposition}
\newtheorem{rmk}[tm]{Remark}
\newtheorem{cor}[tm]{Corollary}
\newtheorem{ex}[tm]{Example}
\newtheorem{fact}[tm]{Fact}
\newtheorem{??}[tm]{Question}
\newtheorem{defi}[tm]{Definition}
\newcommand\blacksquare{{\hspace*{\fill} $\fbox{}$}} 
\newcommand{\ben}{\begin{enumerate}}
\newcommand{\een}{\end{enumerate}}
\newcommand{\bit}{\begin{itemize}}
\newcommand{\eit}{\end{itemize}}
\newcommand{\beq}{\begin{equation}}
\newcommand{\eeq}{\end{equation}}
\newcommand{\la}{\label}
\newcommand{\n}{\noindent}
\newcommand\ci{\cite}
\font\tenmsb=msbm10
\font\sevenmsb=msbm7
\font\fivemsb=msbm5
\def\Bbb#1{{\fam\msbfam #1}}
\font\teneufm=eufm10
\font\seveneufm=eufm7
\font\fiveeufm=eufm5
\def\frak#1{{\fam\eufmfam\relax#1}}
\newcommand{\coke}{ \hbox{\rm Coker} }
\newcommand{\im}{ \hbox{\rm Im} }
\newcommand{\ke}{ \hbox{\rm Ker} }
\newcommand{\lorw}{\longrightarrow}
\newcommand\oql{\overline{\Bbb Q}_\ell}
\newcommand\zed{{\Bbb Z}}
\newcommand\s{\sigma}
\newcommand\e{\epsilon}
\newcommand{\w}[1]{\widetilde{#1}}
\newcommand{\ov}[1]{\overline{#1}}
\newcommand{\m}[1]{\mathcal{#1}}
\newcommand{\ms}[1]{\mathscr{#1}}
\newcommand{{\socle}}{\rm Socle}
\newcommand{\cu}[2]{\m{#1}_{#2}}
\title{A support theorem for the Hitchin fibration:
\\ the case of $SL_n$}
\author{
Mark Andrea A.  de Cataldo}
\address{Mark Andrea A. de Cataldo, Department of Mathematics, Stony Brook University, Stony Brook, NY, 11794-3651}
\thanks{
The research of M.A. de Cataldo was partially supported by NSF grants DMS-1301761 and DMS-1600515,  and by a grant from the Simons Foundation ($\#$296737 to Mark Andrea de Cataldo).
}
\date{}
\begin{document}

\begin{abstract}
We prove that the direct image complex for the  $D$-twisted $SL_n$ Hitchin fibration
is determined by its restriction to the elliptic locus, where the spectral curves are integral. The analogous result
for $GL_n$ is due to  P.-H. Chaudouard
and G. Laumon.
Along the way, 
we prove that the Tate module  of the relative Prym group scheme is polarizable,
and we also prove 
$\delta$-regularity results for some  auxiliary weak abelian  fibrations.
\end{abstract}

\maketitle
MSC: 14D20, 14H60.

Keywords: Hitchin morphism, special linear group, decomposition theorem, supports.

\newpage

\tableofcontents

\section{Introduction}\la{intro}

Let $C$ be a nonsingular projective and integral curve  of genus $g$ over   an algebraically closed field 
 of characteristic zero. Let $D$ be a line bundle   on $C,$ with $d:= \deg{(D)} > 2g-2$.

Fix a pair of coprime positive integers $(n,e)$. The  $GL_n$ moduli space  we consider is  the   moduli space \ci{nitsure} of  stable, rank $n$,  degree $e$, $D$-twisted  Higgs bundles $(E, \phi: E \to E(D))$ on $C$;
it is an integral, quasi projective and nonsingular variety.
There is   the projective  Hitchin morphism $h_n: M_n \to A_n=\oplus_{i=1}^nH^0(C,iD)$ onto the affine space of the possible
characteristic polynomials of $\phi$.

The decomposition theorem 
\ci{bbd} predicts that the direct image complex ${Rh_{n}}_* \oql$ splits into a  finite direct sum of shifted 
simple perverse sheaves, each supported on an integral closed subvariety $S\subseteq A_n$.
These subvarieties are called the   supports of ${Rh_{n}}_* \oql$.
 The {\em socle of  ${Rh_{n}}_* \oql$}, denoted by  ${{\socle}} ({Rh_{n}}_* \oql)$, is the finite subset of $A_n$  of  generic points $\eta_S$ of the supports $S$
 of  ${Rh_{n}}_* \oql$. 
 
 One of the main geometric ingredients of B.C. Ng\^o's proof \ci{ngofl} of the Langlands-Shelstad fundamental lemma
 for reductive Lie groups $G,$
 is his support theorem \ci[Thm. 7.2.1]{ngofl}. This  is a statement concerning  the socle of the
 direct image complex via the Hitchin morphism $M_G \to A_G$ associated with $(G,C,D)$, after restriction to a
 certain  large open subset of the target $A_G$.
In the special case $G= GL_n$,  one considers the elliptic locus, i.e. the  dense open subvariety $A_n^{\rm ell} \subseteq A_n$ corresponding to those points $a \in A_n$ for which the associated spectral curve  is geometrically  integral.
Then   Ng\^o support theorem  implies that   
${\socle} (Rh_{n,*} \oql)  \cap A_n^{\rm ell}= \{\eta_{A_n}\}$, the generic point of the target $A_n.$
In other words, over the elliptic locus,   the simple summands appearing in the decomposition theorem are the intermediate extensions to $A_n^{\rm ell}$  of the direct image lisse sheaves over the locus  $A_n^{\rm smooth}$ of regular values of $h_n.$
 This has striking consequences for the
handling of orbital integrals over the elliptic locus (for every $G$),  which thus  become more tractable:
the ones corresponding to points in $A^{\rm ell}_n \setminus A^{\rm smooth}$  can be related to the ones over $A^{\rm smooth}_n$ by a principle of continuity
on $A_n^{\rm ell};$ this is precisely because
there are no  new supports on the boundary $A_n^{\rm ell}\setminus
A_n^{\rm smooth}$  (cf. \ci[\S1]{ngoaf}). 

Support-type theorems have been appearing in the related geometric contexts of  relative Hilbert schemes
and of  relative compactified Jacobians
of families of reduced  planar curves in \ci{Ma-Yu,Mi-Sh,Mi-Sh2,Mi-Sh-Vi,Sh}, also in connection
with BPS invariants.

It is  thus interesting, important, and seemingly non-trivial, to ``go beyond the elliptic locus."
P.-H.  Chaudouard and G. Laumon have extended 
\ci{ch-laflp2} Ngo's result on $A_n^{\rm ell}$  (which holds for every $G$),
   by proving  that (and here we specialize their result to $G=GL_n$) ${\socle} (Rh_{n,*} \oql)  \cap A_n^{\rm grss}= \{\eta_{A_n}\}$, where   $A_n^{\rm ell} \subseteq A_n^{\rm grss}$ is the larger  open locus for which the  associated spectral curves are reduced.
They have also  subsequently extended this result  to the whole base $A_n$ of the $D$-twisted  $GL_n$ Hitchin fibration in   \ci{ch-la}, where they prove the following
 \begin{tm}\la{ch-la-mt}
{\rm ({\bf $GL_n$ socle} \ci{ch-la})}
${{\socle}} ({R{{h}_n}}_* \oql) = \{\eta_{A_n}\}.$
\end{tm}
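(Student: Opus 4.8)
The plan is to treat $Rh_{n,*}\oql$ over the whole base by reducing to what is already known and then supplying the missing global inputs. The elliptic and, more generally, the reduced locus $A_n^{\mathrm{grss}}$ (where the spectral curves are reduced) are already settled — over $A_n^{\mathrm{grss}}$ the socle is $\{\eta_{A_n}\}$ — so the task is to rule out any support whose generic point lies in the non-reduced locus $A_n \setminus A_n^{\mathrm{grss}}$. I would do this by exhibiting $h_n \colon M_n \to A_n$ as a $\delta$-regular weak abelian fibration over the \emph{entire} base and invoking Ng\^o's support theorem globally. Concretely, let $g_n \colon P_n \to A_n$ be the relative degree-zero Picard group scheme of the family of spectral curves $\widetilde{C}_a \subset \mathrm{Tot}(D)$, acting on $M_n$ fibrewise by tensorisation. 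I must check that $g_n$ is smooth and commutative of relative dimension $d = \dim M_n - \dim A_n$ with affine stabilisers, that $(M_n/A_n, P_n/A_n)$ is a weak abelian fibration, and that the Tate module $T_{\oql}(P_n)$ is polarisable. The polarisability comes from autoduality of the Jacobian and a Weil-type pairing, which must be made to persist across the reducible and non-reduced spectral fibres; this is, for the Jacobian, the exact analogue of the Prym polarisability established in the present paper.

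The second, geometrically decisive input is $\delta$-regularity on all of $A_n$. Writing $\delta(a)$ for the dimension of the affine part of $P_a$ (the total $\delta$-invariant of the singularities of $\widetilde{C}_a$) and $A_n^{\ge \delta} := \{a : \delta(a) \ge \delta\}$, one needs $\mathrm{codim}_{A_n}(A_n^{\ge \delta}) \ge \delta$ for every $\delta$. This is a uniform dimension count over the space of characteristic polynomials; the hypothesis $d = \deg D > 2g-2$ makes the linear systems $|iD|$ positive enough to run the count, which must be pushed down to the deepest, maximally non-reduced strata accumulating at $0 \in A_n$.

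With these in hand, Ng\^o's support theorem applies. Decomposing $Rh_{n,*}\oql$ according to the characters $\kappa$ of the sheaf of connected components $\pi_0(P_n)$, every support $Z$ is attached to such a $\kappa$ and to an associated endoscopic datum; the support inequality together with $\delta$-regularity pins $\mathrm{codim}_{A_n}(Z)$ to its expected value and places $Z$ as a component of a $\delta$-stratum closure. For $GL_n$ there is no nontrivial endoscopy, so the only character surviving the polarisation-induced freeness of $Rh_{n,*}\oql$ over $\bigwedge^\bullet T_{\oql}(P_n)$ is $\kappa = 1$, whose support is the full base. This reproduces the known conclusion over $A_n^{\mathrm{grss}}$ and, granted the global inputs above, extends it across the non-reduced locus. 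To handle the enlarged component group $\pi_0(P_a)$ produced by reducibility and non-reducedness, I would localise via the product formula, expressing the contribution at each singular point of $\widetilde{C}_a$ through local, affine-Springer-type Jacobian factors whose supports are controlled, and verify that their assembly yields no simple summand beyond $\eta_{A_n}$.

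I expect the principal obstacle to be the global $\delta$-regularity estimate on the maximally degenerate strata — precisely where the compactified Picard, its component group, and hence the $\kappa$-decomposition are most delicate — together with the matching product-formula bookkeeping certifying that the extra $\kappa$-isotypic pieces on the non-reduced locus contribute nothing new. Securing polarisability of $T_{\oql}(P_n)$ across these same non-integral fibres is the secondary difficulty, since autoduality of Jacobians must be made to hold for non-reduced spectral curves.
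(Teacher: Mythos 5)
Your proposal diverges from the actual argument at its load-bearing step, and the divergence is a genuine gap. You propose to prove $\delta$-regularity of $J_n/A_n$ over the \emph{entire} base by ``a uniform dimension count'' and then apply Ng\^o's support theorem \ci[Thm. 7.2.1]{ngofl} globally. But global $\delta$-regularity is precisely what is not available: the only known source of $\delta$-regularity here is the Severi inequality $d_a^{\rm ab}(J_n) \geq d_{h_n}-d_{A_n}+d_a$, a statement about compactified Jacobians of \emph{integral} locally planar curves, and it is invoked in the paper only over $A_n^{\rm ell}$ (Theorem \ref{f4}.(2)); no analogue is established for non-reduced spectral curves by any routine count. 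Worse, even granting global $\delta$-regularity, it would exclude nothing off the elliptic locus: combined with the support inequality (\ref{99}) it yields the exact \emph{equality} $d_a^{\rm ab}(J_n)=d_{h_n}-d_{A_n}+d_a$ at every socle point, which is perfectly consistent with supports on non-reduced strata. You would then be thrown onto the second half of your plan, namely controlling the top direct image sheaf over the non-reduced locus via a $\kappa$-decomposition and ``affine-Springer-type'' local factors. That machinery does not exist there: the fibers over non-reduced strata are moduli of rank-one torsion-free sheaves on non-reduced curves (cf. Remark \ref{no} for the fiber over $0$) and can be reducible, so the top sheaf genuinely jumps, while the endoscopic/$\kappa$ formalism you appeal to is developed only over the elliptic locus; for $GL_n$ on non-reduced strata the component groups are not of endoscopic nature, and ``no nontrivial endoscopy for $GL_n$'' is an elliptic-locus statement, not a substitute for this analysis.

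The proof this paper reproduces (Chaudouard--Laumon's, repaired in Remark \ref{nt} and Corollary \ref{lm5b}) sidesteps all of this by replacing global $\delta$-regularity with a \emph{strictly stronger} multi-variable inequality built from the type stratification (\ref{part}). If $a$ has type $(n_\bullet,m_\bullet)$, then by (\ref{3030}) the abelian part of $J_{n,a}$ equals that of $J_{n_\bullet,\alpha}$ for a lift $\alpha \in A_{n_\bullet}^{\rm ell}$ of $a$ under the finite map $\lambda_{n_\bullet m_\bullet}$ -- the abelian part only sees the normalization of the reduction -- and Severi applied factorwise over $A_{n_\bullet}^{\rm ell}$ gives $d_a^{\rm ab}(J_n) \geq \sum_k (d_{h_{n_k}}-d_{A_{n_k}}) + d_a$, i.e. (\ref{sperema}). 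Since $\sum_k (d_{h_{n_k}}-d_{A_{n_k}}) - (d_{h_n}-d_{A_n}) = \left(n-\sum_k n_k\right)\left[d-2(g-1)\right] + (s-1)$, this bound exceeds the plain $\delta$-regularity bound by a strictly positive amount unless $s=1$ and $m_1=1$; played against the support inequality (\ref{99}) it forces every socle point into $A_n^{\rm ell}$ outright, with no analysis of component groups or top cohomology on the non-reduced locus ever needed. Only over $A_n^{\rm ell}$ does one invoke \ci[Thm. 7.2.1]{ngofl}, where integrality of the spectral curve makes the fibers irreducible and the top sheaf constant, so the unique support is $A_n$. The parts of your proposal that are correct and do appear in the paper are the weak-abelian-fibration structure over the whole base: affine stabilizers (Proposition \ref{affstbz}) and polarizability of $T_{\oql}(J_n)$ across non-integral fibres (Theorem \ref{poltmd}) -- though the latter is proved via the determinant-of-cohomology Weil pairing, with the weighted formula (\ref{q10}) on the normalized components, rather than by any autoduality of Jacobians.
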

 
In particular, there  are no  new supports
 as one passes from the regular locus $A_n^{\rm smooth}$, to  the elliptic locus $A_n^{\rm ell}$,
 to $A_n^{\rm grss}$ and, finally,  to the whole of $A_n$.
The decomposition theorem then  takes the form of an isomorphism
 ${Rh_{n}}_*  \oql \cong \oplus_{q \geq 0} \m{IC}_{A_n} (R^q) [-q]$, where $R^q$ is the lisse  restriction
 of the $\oql$-constructible sheaf ${Rh_{n}}_* \oql$ to  $A_n^{\rm smooth},$ and where  $\m{IC}$ denotes
 the intermediate extension functor shifted so as to ``start" in cohomological degree zero.
 Since the general fibers of $h_n$ are (connected)  abelian varieties, we even have $R^q
 \cong \bigwedge^q R^1$ for every $0 \leq q \leq 2  d_{h_n},$ where $d_{h_n}$ is the relative dimension of $h_n.$

When  $G= SL_n$, we have the following picture, which goes back, at least implicitly, to \ci{nitsure}; see \S\ref{sndf}.  
Our  $SL_n$ moduli space $\check{M}_n \subseteq M_n$ consists of those stable
pairs
with fixed $\e=\det (E)$ and trivial trace ${\rm tr}(\phi)=0$. Then 
$\check{M}_n $ is an  integral, quasi projective and nonsingular variety. 
The restriction of the Hitchin morphism  $h_n,$  yields  the Hitchin morphism $\check{h}_n: \check{M}_n \to \check{A}_n:= \oplus_{i=2}^n H^0(X,iD),$ whose socle is the object of study of this paper.

This socle is known over the ellitpic locus $\check{A}_n^{\rm ell} =\check{A}_n \cap A_n:$ 
by work of  Ng\^o \ci{ngoend, ngofl}, we have that    ${\rm socle} ( {R{{\check h}_n}}_* \oql) \cap 
\check{A}_n^{\rm ell}$  is  given by the generic point $\eta_{\check{A}_n},$ union a finite set of points
(\ref{mopz}),
directly related to the endoscopy theory of $SL_n.$

 The purpose of this paper is to prove the following theorem, to the effect that 
 there are no new supports in $\check{A}_n\setminus \check{A}_n^{\rm ell}$, beyond the ones
(\ref{mopz})
already known to dwell in $\check{A}_n^{\rm ell}.$
\begin{tm}\la{maintm}
{\rm ({\bf $SL_n$ socle})}
${{\socle}} ({R{{\check h}_n}}_* \oql) \subseteq \check{A}_n^{ell}.$
\end{tm}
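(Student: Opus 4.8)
The plan is to realize $\check h_n$ as a polarizable, $\delta$-regular weak abelian fibration in the sense of Ng\^o and to combine his support theorem with the $GL_n$ result of Theorem \ref{ch-la-mt}. First I would exhibit the relative Prym group scheme $P \to \check A_n$, whose fiber over $a$ is the connected component of the kernel of the norm map $\mathrm{Pic}(\tilde C_a) \to \mathrm{Pic}(C)$ attached to the spectral curve $\tilde C_a$, and verify that $P$ acts on $\check M_n$ over $\check A_n$ with affine stabilizers, making $(\check h_n, P)$ a weak abelian fibration of the expected relative dimension. The two technical inputs advertised in the abstract enter precisely here: the polarizability of the Tate module $T_{\ql}(P)$, which is exactly what makes Ng\^o's support inequality available, and the $\delta$-regularity, which controls the defect function $\delta$ measuring the dimension of the affine part of the Prym fibers. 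Together they bound the codimension of any support $Z$ of $R\check h_{n,*}\oql$ by $\delta(\eta_Z)$, while $\delta$-regularity forces the reverse estimate on the degeneration strata, so that the two can be simultaneously saturated only along very special loci.

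Next I would exploit the isogeny relationship between the $GL_n$ and $SL_n$ pictures. Writing $A_n \cong H^0(C,D)\times \check A_n$ via the trace, the additive group $H^0(C,D)$ acts on $A_n$ by translating the trace, the abelian variety $\mathrm{Pic}^0(C)$ acts fiberwise on $M_n \to A_n$ by tensoring $E$, and the finite group $\Gamma := \mathrm{Pic}^0(C)[n]$ acts on $\check M_n$ by $E \mapsto E\otimes L$, preserving $\det E$ and $\mathrm{tr}\,\phi$; one recovers $M_n$ from $\check M_n \times \mathrm{Pic}^0(C)\times H^0(C,D)$ as a $\Gamma$-quotient, where $\Gamma$ acts diagonally, by tensoring on the first factor and by translation on the second. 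Since translating the trace does not affect integrality of $\tilde C_a$, this identification carries $A_n^{\rm ell}$ to $H^0(C,D)\times \check A_n^{\rm ell}$, compatibly with $\check A_n^{\rm ell}=\check A_n\cap A_n^{\rm ell}$. A K\"unneth decomposition now splits $R\check h_{n,*}\oql$ into $\Gamma$-isotypic components; because $\Gamma$ acts by translation on $\mathrm{Pic}^0(C)$ it acts trivially on $H^*(\mathrm{Pic}^0(C))$, so $Rh_{n,*}\oql$ sees only the $\Gamma$-invariant component of $R\check h_{n,*}\oql$, tensored by the cohomology of $\mathrm{Pic}^0(C)$ and pulled back along the contractible trace direction. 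By Theorem \ref{ch-la-mt} this invariant component therefore contributes only the principal support $\eta_{\check A_n}\in \check A_n^{\rm ell}$.

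The heart of the matter is the non-trivial $\Gamma$-isotypic components, on which the $GL_n$ theorem is silent and in which the endoscopic supports (\ref{mopz}) genuinely live. Here I would run the support theorem intrinsically on the Prym fibration: combining the support inequality $\mathrm{codim}_{\check A_n}(Z)\le \delta(\eta_Z)$ with the $\delta$-regularity bounds proved both for $\check h_n$ and for the auxiliary weak abelian fibrations attached to the strata of $\check A_n\setminus\check A_n^{\rm ell}$, where $\tilde C_a$ becomes reducible and the Prym acquires extra toric directions. The strategy is to show that the forced saturation of these inequalities off the elliptic locus can be matched, via the endoscopic decomposition of $P$ along the degeneration, with a support already present over the integral locus, so that no genuinely new support can form. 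I expect the main obstacle to be exactly this boundary analysis: establishing the \emph{sharp} form of $\delta$-regularity for the auxiliary fibrations, and verifying that the polarization on $T_{\ql}(P)$ stays non-degenerate as $\tilde C_a$ breaks up, since it is this non-degeneracy that obstructs the appearance of spurious supports in $\check A_n\setminus\check A_n^{\rm ell}$.
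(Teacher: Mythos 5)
Your first paragraph matches the paper's setup: $(\check M_n,\check A_n,\check J_n)$ is made into a weak abelian fibration (Theorem \ref{95}), and you correctly identify the two advertised inputs. One point you should sharpen: the polarizability of $T_{\oql}(\check J_n)$ (Theorem \ref{slnpol}) must be proved over \emph{all} of $\check A_n$, including where spectral curves are non-reduced, precisely because the support inequality (\ref{99biz}) has to be applied at arbitrary socle points; and at such degenerate points the pairing is \emph{not} non-degenerate on the full Tate module --- its kernel is exactly the affine part, which is the definition of a polarization here. Your K\"unneth/isotypic reduction via the \'etale $\Gamma={\rm Pic}^0(C)[n]$-cover of Lemma \ref{lm2} is a genuine alternative for part of the statement (the paper never invokes Theorem \ref{ch-la-mt} at all), but, as you yourself note, it only controls the $\Gamma$-invariant summand of $R\check h_{n,*}\oql$; the endoscopic constituents, where the theorem has content, are untouched by it.

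The gap is your third paragraph, which is where the theorem actually lives. Two concrete problems. First, $\delta$-regularity of $\check J_n/\check A_n$ is available only over $\check A_n^{\rm ell}$ (Proposition \ref{1q2}), and the paper explicitly observes (\S\ref{stry}) that even there the single-variable inequality (\ref{98bis}) is of no use toward Theorem \ref{maintm}; there are no ``auxiliary fibrations attached to the strata of $\check A_n\setminus\check A_n^{\rm ell}$'' with a sharp $\delta$-regularity of the kind you invoke, and your proposed mechanism --- matching would-be boundary supports with supports over the elliptic locus via an endoscopic decomposition of the Prym --- is not substantiated and is not how any known argument runs. Second, the missing idea is the \emph{multi-variable} $\delta$-inequality (\ref{speremb}): given a socle point $a$ of type $(n_\bullet,m_\bullet)$, one lifts it along the finite map $\lambda_{n_\bullet m_\bullet}:A_{n_\bullet}\to A_n$ of (\ref{lam}) to a point $\alpha$ in the multi-variable elliptic locus, uses $d_a^{\rm ab}(J_n)=d_\alpha^{\rm ab}(J_{n_\bullet})$ --- which holds because by (\ref{3030}) the abelian part of ${\rm Pic}^0(\sum_k m_k\Gamma_k)$ is $\prod_k{\rm Pic}^0(\w{\Gamma_k})$, independent of the multiplicities $m_k$ --- and imposes the weighted-traceless condition $\sum_k m_k a_k(1)=0$ of (\ref{dr}); the whole point of \S\ref{qw2}--\S\ref{rt5} is that this codimension-$h^0(C,D)$ slicing does not spoil the codimensions of the $\delta$-loci, thanks to the product decompositions (\ref{c4}) and (\ref{e3}). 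Feeding (\ref{speremb}) and $d_a^{\rm ab}(\check J_n)=d_a^{\rm ab}(J_n)-g$ (Proposition \ref{azz=}) into the support inequality and computing dimensions yields (\ref{t1}), namely $0\geq (n-\sum_k n_k)\left[d-2(g-1)\right]+(s-1)$, which forces $s=1$ and all $m_k=1$, i.e. $a\in\check A_n^{\rm ell}$. This argument applies uniformly to all of $R\check h_{n,*}\oql$, so no isotypic decomposition is needed; without something equivalent to (\ref{speremb}), your plan cannot close.
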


At first sight, the proof of our main Theorem \ref{maintm} for the $SL_n$ socle runs in parallel with the one 
of Theorem \ref{ch-la-mt} for the $GL_n$ socle in \ci[\S 9]{ch-la}, where  the authors   use: Ng\^o support inequality
over the whole base  $A_n;$  a multi-variable $\delta$-regularity inequality for  the Jacobi group scheme acting on the Hitchin fibers over
the elliptic locus;  the identity  between the abelian variety parts of the Jacobian of an arbitrary
spectral curve, and the Jacobian of the  normalization of its reduction.

The situation over $SL_n$ presents some substantial differences, which we now summarize.
\ben
\item
We need to prove the support inequality Theorem \ref{f4}.(1) over the  whole $SL_n$ base  $\check{A}_n.$
This had been known \ci{ngofl} over $\check{A}_n^{\rm ell}$  only.

\item
In order to achieve the $SL_n$ support inequality, we need to establish the polarizability 
Theorem \ref{slnpol} of the Tate module of the Prym group scheme over $\check{A}_n.$

\item
In turn, this required that:   we determine the explicit form (\ref{q10}) of a natural  polarization of the  Tate module of the  Jacobian
of an arbitrary spectral curve (see the $GL_n$ polarizability
Theorem \ref{poltmd}); we combine the explicit (\ref{q10})  
with the identification (\ref{lk}) of the affine parts of the fibers of the  Jacobi  and Prym groups schemes.
At this juncture, the $SL_n$ polarizability result follows by first exhibiting the Prym Tate module
as a natural direct summand of the Jacobi Tate module, and then  by using that pull-back and push-forward (norm)
are adjoint for the cup product.

\item
The $\delta$-regularity inequality over $\check{A}_n^{\rm ell}$ afforded by (\ref{98bis}) is not useful towards proving our main result
Theorem \ref{maintm}.
However, the method of proof  is: we use a product formula for the Hitchin fibration, and the identification   (\ref{lk}) of the affine parts of the  Jacobi and Prym
varieties, to show that the codimensions of the $\delta$-loci  are preserved when passing from the
elliptic locus $A_n^{\rm ell},$ to  the traceless elliptic locus $\check{A}_n^{\rm ell},$ so that (\ref{98bis}) holds.

\item
We pursue the same line of argument to reach the correct  $SL_n$ replacement (\ref{speremb}) of the 
 $GL_n$  multi-variable  $\delta$-regularity inequality used in \ci[\S9]{ch-la}.
This is done by first considering a multi-variable Hitchin base,  then by slicing it using linear weighted conditions
on the traces, and finally by  verifying that the codimensions of the $\delta$-loci are un-effected by the slicing.

\item
We fix a minor inaccuracy in \ci{ch-la}. See Remark \ref{nt}.

\een

As to the structure of the paper, we refer the reader to the  summaries at the beginning of
each of the five sections.

{\bf Acknowledgments.} I thank  Pierre-Henri Chaudouard, Brian Conrad,  Jochen Heinloth, Andrea Maffei, Eyal Markman, Luca Migliorini, Mircea Musta\c{t}\u{a},
Christian Pauly, Giulia Sacc\`a and   Jason Starr
for very stimulating conversations. I am very grateful to the anonymous referee for suggestions on improving the article.
\section{Preliminaries}\la{prelim}

This \S\ref{prelim} is  a collection of preliminary constructions, results and definitions.
\S\ref{aninnov}, \S\ref{sndf} 
introduce the $D$-twisted $SL_n$ Hitchin morphism  $\check{h}_n: \check{M}_n\to \check{A}_n$
which is the focus of this paper. The $GL_n$ case plays an important role, and is thus  discussed as well.  \S\ref{spcv} discusses spectral curves and covers: diagram
(\ref{gh}) plays a recurrent  role in the paper.  Spectral curves afford an important alternative
interpretation of the fibers of the Hitchin morphism via the Hitchin, Beauville-Narasimhan-Ramanan, Schaub correspondence, which is discussed in \S\ref{mmee}, together with some essential properties of the Hitchin morphism
and of its fibers: connectivity,  action of the Prym variety (\ref{rb}), irreducible components over the elliptic locus.
This leads to a discussion  in \S\ref{vbn} of the endoscopic locus  for $SL_n,$  which can be described with the aid
of the $n$-torsion in ${\rm Pic}^0(C)$. Section \S\ref{w2} discusses Ng\^o's notion of  $\delta$-regular weak abelian fibration, which is
a very important tool in the study of Hitchin systems, and an  essential one for this paper; two highlights are Ng\^o support inequality, and its ``opposite", the   $\delta$-regularity inequality.

Unless otherwise mentioned, we work with   varieties --separated schemes of finite type-- over a field of characteristic zero.
Let $C$ be an integral and  nonsingular curve of genus $g$ and let   $D \in {\rm Pic}^d(C)$ be a  fixed line bundle on $C$ of degree
$d > 2g-2.$ We fix  two coprime integers $(n,e)$ and a  degree $e$ line bundle $\e \in {\rm Pic}^e(C)$. Recall that
the co-primality condition ensures that the two notions of  stability and of semi-stability coincide,
so that the (coarse=fine) moduli spaces of Higgs bundles  we consider are nonsingular.

\subsection{$GL_n$ and $SL_n$ Hitchin fibrations}\la{aninnov} $\;$

A standard reference for what follows is \ci{nitsure}. 

{\bf The $GL_n$ case.}
Let $\ms{M}$ be the moduli space of stable, $D$-twisted, $GL_n$ Higgs bundles of rank $n$ and degree $e$ on the curve $C$.
Then  $\ms{M}$ is a nonsingular and  quasi-projective variety
of pure dimension $n^2d+1$. It parameterizes stable  pairs
$(E,\phi)$, where:  $E$ is a rank $n$ and degree $e$ vector bundle on the curve $C$, and  $\phi: E \to E(D)$ is a morphism
of $\mathcal{O}_C$-modules.
The notion of stability is the usual one:  for every $\phi$-invariant proper sub-bundle $F\subseteq E$,
the slopes $\mu:= \deg/{\rm rk}$ satisfy the inequality
 $\mu (F) < \mu (E)$. There is the projective characteristic morphism 
\[
h: \ms{M} \to \ms{A}:= \oplus_{i=1}^n H^0(C, iD),
\]
sending  $(E,\phi)$ to the  coefficients $(- {\rm tr} (\phi), + {\rm tr}(\wedge^2 \phi), \ldots, (-1)^n \det (\phi))$
of the characteristic polynomial of $\phi$. The elements of $\ms{A}$ are called  characteristics.

 The pure-dimensional nonsingular variety  $\ms{M}$ is connected, hence  irreducible. One way to see this, is to
couple the fact that the proper characteristic morphism is of pure relative dimension 
(\ci[Corollaire 8.2]{ch-la}) with the fact (Remark \ref{rmkre}) that
the general fiber, being the Jacobian of a nonsingular and connected spectral curve,  is connected.
I thank the anonymous referee for bringing this to my attention.

 The moduli space $N$
of rank $n$ and degree $e$ vector bundles on $C$ sits naturally in $\ms{M}$ (take $\phi:=0$).  It is well-known that $N$ is integral, nonsingular, projective and of dimension
$n^2(g-1)+1$. We have inclusions
$\ms{M} = \overline{T}  \supseteq T \supseteq N$, where $T$ is the total space of the vector bundle
of rank $n^2 [d-(g-1)]$
over $N$ with fiber at $E$ given by $H^0(C, {\rm End}(E)(D))$; see \ci[Prop. 7.1 and the formula above it]{nitsure}. 
Then  $T$ is integral, nonsingular, of dimension
$n^2d+1$, and it is  a Zariski-dense  open subvariety
of $\ms{M}$; see \ci[p.297-8]{nitsure}.

{\bf The $GL_n$ traceless case.} We need the following simple  traceless variant of the $D$-twisted $GL_n$ moduli space: geometrically, it 
is the pre-image via the morphism $h: \ms{M} \to \ms{A}$  of the locus $\ms{A}(0) \subseteq \ms{A}$ of traceless characteristics.
Let $\ms{M}(0) \subseteq \ms{M}$ be the moduli space of   stable pairs $(E,\phi)$ as above, subject to the additional traceless constraint ${\rm tr}(\phi)=0$. By repeating the arguments in \ci{nitsure} concerning $\ms{M}$,
but with the  traceless constraint, we see that  $\ms{M}(0)$  is a nonsingular and  quasi-projective variety,
of pure dimension $nd^2+1 - h^0(D).$ Morevoer, we have a natural isomorphism $\ms{M}\cong
H^0(C,D) \times \ms{M}(0)$ (see \S\ref{prd}, (\ref{codgsq})), implying that the nonsingular $\ms{M}(0)$ is connected and  irreducible.

As above, we have inclusions $\ms{M}(0) = \overline{T(0)} \supseteq T(0) \supseteq N$, with the same properties listed above, except that we 
take traceless endomorphisms, and the rank of the corresponding vector bundle on $N$ equals $h^0 (C, {\rm End}^0(E)(D))= n^2[d-(g-1)] - h^0(D)$. We have  the projective  characteristic morphism 
\[h(0) : \ms{M}(0) \to \ms{A}(0):= \oplus_{i=2}^n H^0(C,iD).\]

{\bf The $SL_n$ case.} Finally, we introduce the moduli space to which this paper is devoted.
Fix a line bundle $\e \in {\rm Pic}^e(C)$  on $C$, of degree $e.$
Let $\ms{M}(0,\e) \subseteq \ms{M}(0) \subseteq  \ms{M}$ be the moduli space of  stable pairs $(E,\phi)$ as above, subject to  ${\rm tr}(\phi)=0$ and to $\det (E) =\e$.
 By repeating the arguments in \ci{nitsure}, but with  the traceless and fixed-determinant constraints, we see that the variety $\ms{M}(0,\e)$  is  nonsingular and  quasi-projective,
of pure dimension $n^2d+1 - h^0(D)- g.$  We have  the projective  characteristic map
\[h(0,\e) : \ms{M}(0,\e) \to \ms{A}(0):= \oplus_{i=2}^n H^0(C,iD).\]
Let $\ms{M}(0,\e)_o$ be the irreducible (also a connected) component containing the moduli space $N(\e)$ of stable rank $n$ and degree $e$ bundles on $C$ with fixed determinant $\e \in {\rm Pic}^e(C)$.
It is well-known that the variety $N(\e)$ is integral, nonsingular, projective,  and of dimension  $(n^2-1)(g-1)$.
As above, we have inclusions $\ms{M}(0,\e)_o = \overline{T(0,\e)} \supseteq T(0,\e) \supseteq N(\e)$, with the same properties listed above (again, we  take traceless endomorphisms).

Note that $\ms{M}(0,\e)=\ms{M}(0,\e)_o$ and that  the isomorphism class of $\ms{M}(0,\e)_o$ is independent of $\e \in  {\rm Pic}^e(C)$.
This can be seen as in the proof of the following simple

\begin{lm}\la{rt6} The variety $\ms{M}(0,\e)$ is connected, i.e. $\ms{M}(0,\e)=\ms{M}(0,\e)_o$.
The variety $\ms{M}(0,\e)$ is the fiber over $\e \in {\rm Pic}^e(C)$ of the determinant map
$\det: \ms{M}(0) \to {\rm Pic}^e(C)$, as well as the fiber over $(0,\e) \in H^0(C,D)\times {\rm Pic}^e(C)$
of the trace-determinant map ${\rm tr} \times \det:\ms{M} \to H^0(C,D)\times {\rm Pic}^e(C)$. 
\end{lm}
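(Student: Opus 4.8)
The plan is to treat the two ``fiber'' assertions as essentially definitional and to concentrate the real work on the connectedness statement $\ms{M}(0,\e)=\ms{M}(0,\e)_o$, which I would deduce from an action of the Jacobian $J:={\rm Pic}^0(C)$, using the already-established connectedness of $\ms{M}(0)$ and the (well-known) connectedness of $N(\e)$.

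I begin with the fiber descriptions, which are immediate from the moduli definitions. By construction $\ms{M}(0,\e)\subseteq \ms{M}(0)$ is exactly the locus of pairs $(E,\phi)$ with $\det(E)=\e$, so as a set it is $\det^{-1}(\e)$ for $\det\colon \ms{M}(0)\to {\rm Pic}^e(C)$; and inside $\ms{M}$ it is cut out by the two conditions ${\rm tr}(\phi)=0$ and $\det(E)=\e$, i.e. it is $({\rm tr}\times\det)^{-1}(0,\e)$. To upgrade this to an equality of schemes I would observe that $\det$ is $J$-equivariant and surjective onto the single $J$-orbit ${\rm Pic}^e(C)$, hence flat and, the source and target being nonsingular with equidimensional fibers, smooth; therefore the scheme-theoretic fiber is reduced and nonsingular and agrees with the moduli space $\ms{M}(0,\e)$. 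The trace factor is handled by the product structure $\ms{M}\cong H^0(C,D)\times\ms{M}(0)$ from \S\ref{aninnov}, under which ${\rm tr}$ becomes a linear projection. The dimension count recorded in \S\ref{aninnov}, namely $\dim\ms{M}(0)-\dim{\rm Pic}^e(C)=\dim\ms{M}(0,\e)$, is consistent with all this.

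For the connectedness, I introduce the action $L\cdot(E,\phi):=(E\otimes L,\phi\otimes{\rm id}_L)$ of $J$ on $\ms{M}(0)$. Tensoring by a line bundle preserves stability and the characteristic polynomial, hence the traceless condition, and acts on determinants by $\det(E\otimes L)=\det(E)\otimes L^{n}$; thus $\det$ is equivariant for the $J$-action $L\cdot\e=\e\otimes L^{n}$ on ${\rm Pic}^e(C)$. Since the $n$-th power isogeny $J\to J$ is surjective with kernel the finite group $J[n]\cong(\zed/n\zed)^{2g}$, this action is transitive with stabilizer $J[n]$, so ${\rm Pic}^e(C)=J/J[n]$ (after fixing $\e$) and the action map $a\colon J\times\ms{M}(0,\e)\to\ms{M}(0)$, $a(L,m)=L\cdot m$, is surjective. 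The key structural step is to identify $\ms{M}(0)$ with the associated bundle $(J\times\ms{M}(0,\e))/J[n]$ for the free action $\tau\cdot(L,m)=(L\tau^{-1},\tau\cdot m)$, so that $a$ is finite \'etale of degree $|J[n]|$, in particular open and closed.

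The conclusion is then a $\pi_0$-count. As $J$ is connected, the components of $J\times\ms{M}(0,\e)$ are the sets $J\times Z$ for $Z$ a component of $\ms{M}(0,\e)$, and passing to the finite \'etale quotient gives $\pi_0(\ms{M}(0))=\pi_0(\ms{M}(0,\e))/J[n]$; since $\ms{M}(0)$ is connected, $J[n]$ acts transitively on $\pi_0(\ms{M}(0,\e))$. On the other hand $J[n]$ fixes the distinguished component $\ms{M}(0,\e)_o$: it contains the connected, $J[n]$-stable subvariety $N(\e)$ (indeed $\det(E\otimes\tau)=\e\otimes\tau^{n}=\e$ for $\tau\in J[n]$), so each $\tau\in J[n]$ sends $\ms{M}(0,\e)_o$ to the component through $N(\e)$, namely itself. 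A transitive action on a finite set that fixes a point forces that set to be a singleton, so $\ms{M}(0,\e)=\ms{M}(0,\e)_o$ is connected, and nonsingularity upgrades this to irreducible; the transitivity of $J$ on ${\rm Pic}^e(C)$ simultaneously yields the independence of the isomorphism class on $\e$. I expect the main obstacle to be precisely the structural claim that $a$ realizes $\ms{M}(0)$ as the free quotient $(J\times\ms{M}(0,\e))/J[n]$ --- that the only coincidences $a(L,m)=a(L',m')$ inside a fiber come from $J[n]$ (which uses that a stable bundle has finite tensor-stabilizer contained in $J[n]$), and that the quotient map is finite \'etale so as to control $\pi_0$. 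Granting this, the equivariance, surjectivity, and component-chasing are all formal.
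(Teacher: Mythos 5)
Your proposal is correct, but your route to the connectedness statement is genuinely different from the paper's. The paper also starts from the ${\rm Pic}^0(C)$-equivariance of $\det$ (whence smoothness and mutually isomorphic fibers), but then concludes by a dimension count: the complement $Z:=\ms{M}(0)\setminus T(0)$ is closed, ${\rm Pic}^0(C)$-invariant, and properly contained in the irreducible $\ms{M}(0)$, so by equivariance every fiber of $Z\to{\rm Pic}^e(C)$ has the same dimension, strictly smaller than $\dim\ms{M}(0,\e)$; since the smooth fiber $\det^{-1}(\e)$ is of pure dimension $\dim\ms{M}(0,\e)$, no connected component of it can be contained in $Z$, hence every component meets $T(0)\cap\det^{-1}(\e)=T(0,\e)$, which is irreducible and contained in $\ms{M}(0,\e)_o$, forcing $\det^{-1}(\e)=\ms{M}(0,\e)_o$. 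You instead (writing $J:={\rm Pic}^0(C)$) construct the finite \'etale Galois cover $a\colon J\times\ms{M}(0,\e)\to\ms{M}(0)$ with group $J[n]$ and run a $\pi_0$ orbit--fixed-point argument: transitivity from the connectedness of $\ms{M}(0)$, the fixed component being the one through the $J[n]$-stable connected $N(\e)$. This is a valid and rather elegant alternative, with one caution you rightly flag: your ``key structural step'' is in substance the cartesian square of the paper's Lemma \ref{lm2}, whose proof in the paper \emph{invokes} Lemma \ref{rt6} (``$\det$ is smooth with integral fibers''), so you must establish it without assuming anything about the fibers --- and this is doable, indeed more cheaply than you anticipate: smoothness of $\det$ needs only equivariance; the scheme-theoretic equality $\ms{M}(0,\e)=\det^{-1}(\e)$ then follows as you say; and the shearing assignment $(L,x)\mapsto (L,\,L^{-1}\cdot x)$ is a \emph{morphism} from the fiber product $J\times_{{\rm Pic}^e(C)}\ms{M}(0)$ (taken along $L\mapsto \e\otimes L^{\otimes n}$) to $J\times\ms{M}(0,\e)$, inverse to $(L,m)\mapsto (L,\,L\cdot m)$. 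This gives the cartesian square directly and lets you bypass any ``bijective implies isomorphism'' step, which would be delicate at this stage precisely because the domain is not yet known to be irreducible (bijectivity alone does not yield an isomorphism for a disconnected source, as $(\mathbb{A}^1\setminus\{0\})\sqcup\{0\}\to\mathbb{A}^1$ shows). Two smaller points: determinant cancellation alone shows that the fibers of $a$ are exactly the free $J[n]$-orbits ($\e\otimes L^{\otimes n}=\e\otimes L'^{\otimes n}$ gives $\tau:=L\otimes L'^{-1}\in J[n]$ and $m'=\tau\cdot m$), so the finite tensor-stabilizer of a stable bundle only governs when $\tau\cdot m=m$, not the orbit description; and your use of the product structure $\ms{M}\cong H^0(C,D)\times\ms{M}(0)$ for the trace factor is non-circular, since Lemma \ref{lm1} is independent of Lemma \ref{rt6}. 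Comparing costs: the paper's argument is more elementary and self-contained, needing only the density and irreducibility of $T(0,\e)$; yours requires setting up the \'etale cover but explains the connectedness conceptually (a transitive $J[n]$-action on components with a fixed point) and delivers a Lemma-\ref{rt6}-independent form of Lemma \ref{lm2} as a by-product.
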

\begin{proof}
The map $\det$ is    equivariant with respect
to the action of ${\rm Pic}^0(C)$ given by $L\cdot (E,\phi):= (E\otimes L, \phi \otimes {\rm Id}_L)$
on the domain, and by $L \cdot M:= M \otimes L^{\otimes n}$ on the target.  It follows that $\det$ is smooth
of relative dimension $\dim{(\ms{M}(0,\e))}$, and that all of its fibers are mutually isomorphic to each other. The same is true of the restriction of $\det$ to the ${\rm Pic}^0(C)$-invariant
open subvariety $T(0) \subseteq \ms{M}(0).$ Let $Z:= \ms{M} (0) \setminus T(0)$ be the closed complement. 
The resulting map $Z \to {\rm Pic}^0(C)$  is also  ${\rm Pic}^0(C)$-invariant, so that all of its fibers have the same dimension, which must be strictly smaller than  $\dim{(\ms{M}(0,\e))}$.
It is clear that $\ms{M}(0,\e)_o$ is contained in $\det^{-1} (\e)=\ms{M}(0\,e)$ and that, by the smoothness of $\det$, it must constitute a connected component
of such fiber. Since the fiber $\det^{-1} (\e)$ is of pure dimension  $\dim{(\ms{M}(0,\e))}$, the variety $Z$ cannot contain
any other connected component of  the smooth fiber $\det^{-1} (\e)$. We have thus proved 
that $\det^{-1}(\e) = \ms{M}(0,\e)=\ms{M}(0,\e)_o$, which are thus all connected, for the third one is by construction.
The assertion concerning ${\rm tr}\times \det$ is proved in a similar way.
\end{proof}

\subsection{Simplified notation for  Hitchin fibrations}\la{sndf}$\;$

 We want to simplify our notation, while emphasizing the role of the rank $n$.   
 
Fix $(n,e,\e, D)$. Denote the characteristic Hitchin   morphisms
\[h: \ms{M} \to \ms{A},  \qquad h(0): \ms{M}(0) \to 
\ms{A}(0),   \qquad
h(0,e) : \ms{M}(0,\e) \to \ms{A}(0)
\] 
as follows:
\beq\la{hitm}
h_n: M_n \to A_n, \qquad  h_n(0) : M_n(0) \to A_n(0), \qquad \check h_n: \check M_n \to \check A_n:=A_n(0).
\eeq
We are denoting the same object $\check{A}_n = A_n(0)$ in two different ways:
we prefer to use the notation $A_n(0)$ when dealing with $M_n(0)$,
and to use $\check{A}_n$ when dealing with $\check{M}_n$.

The projective morphisms   $h_n$ and $\check{h}_n$ are known  as the $D$-twisted, Hitchin $GL_n$ and $SL_n$ fibrations.  The morphism $h_n (0)$ plays an important auxiliary role in this paper.

We shall also need to consider two  several-variable-variants
of these Hitchin fibrations, namely  
$h_{n_\bullet}: M_{n_\bullet} \to A_{\bullet}$,  and $h_{n_\bullet m_\bullet}(0): M_{n_\bullet m_\bullet} (0)
\to A_{n_\bullet m_\bullet} (0)$ (cf. \S\ref{qwoz} and \S\ref{qw2}).

An important locus inside the base of the Hitchin fibration is the elliptic locus. In the case of $GL_n$ and $SL_n$ we define 
it as follows.

\begin{defi}\la{defell} {\rm ({\bf  Elliptic locus})}
The elliptic loci $A_n^{\rm ell} \subseteq A_n$  and $\check{A}_n^{\rm ell} \subseteq \check{A}_n$ are the respective  Zariski dense open subvarieties
of points such that the  associated spectral curves are geometrically integral.
\end{defi}

Clearly, $\check{A}_n^{\rm ell} = A_n^{\rm ell} \cap \check{A}_n$.

 \subsection{Spectral covers and the norm map}\la{spcv}$\;$

Let $\pi:V(D)\to C$ be the surface  total space of the line bundle $D$ on $C$.
Let $t$ be the universal section of $\pi^* D$,
with zero set on $V(D)$ given by $C$, viewed as the zero section on $V(D)$. Let $\mathcal C=\mathcal C_n \subseteq V(D) \times A_n$ be the universal spectral curve, that is the relative curve over $A_n$
with fiber $\mathcal C_a$ over  a closed point $a=(a(1), \ldots, a(n)) \in A_n,$ given by the zero set in $V(D)\times \{a\}$ of the section
$P_a(t):= t^n + \pi^*a(1)t^{n-1} + \pi^*a(2) t^{n-2} + \ldots +  \pi^*a(n)$ of  the line bundle $ \pi^*(nD)$ on  $V(D)\times \{a\}$.
Note that $A_n$ is an affine space inside the projective space  given by the linear system $|nC|$ on the standard projective completion
$\mathbb{P}_C (\m{O}_C \oplus \m{O}_C(-D))$
of $V(D)$, where $C$ sits as the zero section.
Let $p:\m{C} \to A_n$ be the natural ensuing morphism.
For $a \in A_n,$ the  spectral curve $\mathcal C_a$ is geometrically connected  and maps  $n:1$ onto $C_a:=C\otimes k(a)$
via the flat finite morphism $p_a:=p_{|\m{C}_a}: \mathcal C_a \to C_a$.
The total space of the family  $\mathcal C$ is integral and  nonsingular, and  the natural morphism
$\m{C} \to C\times A_n$ is finite, flat  and of degree $n$.

When we view each spectral curve $\mathcal C_a$  over a geometric  point $a $ of $A_n,$   as an effective Cartier divisor on $V(D)\otimes k(a),$ we may write
$\mathcal C_a = \sum_{k=1}^s m_{k,a} \mathcal C_{k,a}$,
where each $\mathcal C_{k,a}$ is geometrically  integral, each  integer $m_{k,a} >0$, and the expression is unique.
Each curve $\mathcal C_{k,a}$ maps  finitely  onto $C_a;$ denote the corresponding degree by $n_{k,a}.$ Clearly, $n= \sum_k m_{k,a} n_{k,a}.$
By considering the coefficients $a(i)$ above as the  $i$-th symmetric functions of the $D$-valued roots
of the polynomial equation $P_a(t)$, we obtain the unique factorization $P_a(t)= \prod_{k} P^{m_{k,a}}_{a_k}(t)$, where each $a_k(i) \in H^0(C,iD)$, $1\leq i\leq n_k,$ is the $i$-th symmetric function
of the $D$-valued roots of $P_a(t)$  that lie on $\mathcal C_{k,a}$. In particular,  we have that $a_k$ is a geometric point of $A_{n_k}$ (base of the Hitchin fibration for $(n_k, e, D)$), and that  $\m{C}_{k,a}$ is a spectral curve for the $D$-twisted $GL_{n_k}$ Hitchin fibration. 

Let $a \in A_n$.
 We need to list the various covers of the curve $C_a=C \otimes k(a)$ that arise from the given spectral cover $p_a: \mathcal{C}_a \to C_a.$ In doing so, we also simplify and abuse  the notation a little bit.
We do not assume the point $a \in A_n$ to be a geometric one,
so that the intervening integral curves may not be geometrically integral.

We denote the curve  $\m{C}_a= \sum_k m_k  \Gamma_k$, where: each   $\Gamma_k$
is a spectral curve,  zero-set of a section $\frak{s}_k$  of the line bundle $\pi^* (n_kD)$ on the surface $V(D)\otimes k(a)$;  the $n_k >0$ are uniquely-determined positive integers, and  we have $n = \sum n_k m_k.$ Scheme-theoretically, $m_k \Gamma_k$ is the zero set of the $m_k$-th
power $\frak{s}^{m_k}$, and $\m{C}_a = \sum_k m_k \Gamma_k$ 
is the zero set of the product $\prod_k \frak{s}_k^{m_k}$. We denote by $\xi_{3,k}: \w{\Gamma_k} \to \Gamma_k$ the normalization
morphism. 

We have the following commutative diagram of finite surjective  morphisms of curves
\beq\la{gh}
\xymatrix{
&&\sum_k \Gamma_k  = {\mathcal C}_{a,{\rm red}} \ar[rd]^\rho& \\
\w{\mathcal C_{a,{\rm red}}}=\coprod_k \w{\Gamma_k} \ar[rru]^\nu \ar@/^1pc/[rrr]^{\hskip 13mm \xi} \ar[r]_{\hskip
5mm \xi_3}  \ar[rrdd]^{\w{p}} & 
\coprod_k \Gamma_k \ar[rr]_{\xi_4} \ar[ru] \ar[rd]^{\xi_2} \ar@/_/[rdd]^{p''} && \sum_k m_k \Gamma_k = \mathcal C_a \ar[ddl]_p \\
&& \coprod_k m_k \Gamma_k \ar[ru]^{\xi_1} \ar[d]^{p'} & \\
&& C_a=C\otimes k(a).& &
}
\eeq

\begin{fact}\la{piczero}
{\rm ({\bf The Jacobian of a spectral curve})}
Let $\ov{a}$ be a geometric point of $A_n$ with underlying Zariski point $a\in A_n$. Then 
the identity connected component ${\rm Pic}^0 (\m{C}_a)$  of the degree zero component of ${\rm Pic}(\m{C}_a)$
consists of the isomorphism classes of line bundles on  the spectral curve $\m{C}_a$ whose restriction 
to each irreducible component of $\m{C}_{\ov{a}}$ have degree zero; see \ci[\S9.3, Cor. 13]{neron}.
\end{fact}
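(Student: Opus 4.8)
The plan is to deduce the statement from the structure theory of the Picard scheme of a (possibly non-reduced, reducible) projective curve, the cited \ci[\S9.3]{neron} furnishing the only substantive input, by successively reducing to the geometric fibre, then to the underlying reduced curve, and finally to its normalisation. Since $\m{C}_a$ is a geometrically connected projective curve over $k(a)$, as established above, the Picard functor is represented by a group scheme ${\rm Pic}_{\m{C}_a/k(a)}$ locally of finite type over $k(a)$, and ${\rm Pic}^0(\m{C}_a)$ is its identity component, a group scheme of finite type. First I would observe that the identity component of a group scheme locally of finite type over a field is geometrically connected, so that its formation commutes with the base change $k(a)\to k(\ov a)$; the condition that the component-degrees of a line bundle $L$ vanish is likewise, by definition, a condition on $L_{\ov a}$. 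Hence $L\in {\rm Pic}^0(\m{C}_a)$ if and only if $L_{\ov a}\in {\rm Pic}^0(\m{C}_{\ov a})$, and we are reduced to the algebraically closed field $k(\ov a)$. Write $Y:=\m{C}_{\ov a}$.

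Over $k(\ov a)$ I would peel off two layers. The closed immersion $Y_{\rm red}\hookrightarrow Y$ induces a surjective homomorphism ${\rm Pic}(Y)\to {\rm Pic}(Y_{\rm red})$ whose kernel, arising from $1+(\text{nilpotents})$, is a connected unipotent group. The normalisation $\nu:\w{Y_{\rm red}}\to Y_{\rm red}$ --- a disjoint union of the smooth projective curves normalising the geometric irreducible components of $Y$, cf. (\ref{gh}) --- induces, through the exact sequence $1\to\m{O}_{Y_{\rm red}}^*\to \nu_*\m{O}_{\w{Y_{\rm red}}}^*\to \m{Q}\to 1$ with $\m{Q}$ a skyscraper supported on the finitely many singular points, a surjective homomorphism $\nu^*:{\rm Pic}(Y_{\rm red})\to {\rm Pic}(\w{Y_{\rm red}})$ with connected affine kernel. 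In both cases the kernel is connected, so the preimage of an identity component is connected; together with the fact that the image of ${\rm Pic}^0$ is connected and contains the origin, this forces ${\rm Pic}^0(Y)$ to coincide with the preimage of ${\rm Pic}^0(\w{Y_{\rm red}})$, the product of the Jacobians of the normalised components. Since the degree of $L|_{\Gamma}$ on a geometric component $\Gamma$ equals the degree of the pullback of $L$ to the smooth curve normalising $\Gamma$, and is unaffected by the nilpotents, membership in this preimage is exactly the vanishing of the degree on every geometric component, which is the assertion.

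The only non-formal ingredient, and precisely what \ci[\S9.3, Cor. 13]{neron} provides, is the connectedness of the two kernels above, i.e. the identification of the affine part of ${\rm Pic}^0(Y)$ assembled from the nilpotents and from the gluing data at the singular and non-geometrically-integral points. Granting this, the connectedness bookkeeping is routine; the point to keep in view --- and the only place where the non-reducedness really enters --- is that the multiplicities $m_k$ in $\m{C}_a=\sum_k m_k\Gamma_k$ feed solely into this connected affine kernel, and therefore do not enlarge the identity component beyond the geometric-component-degree condition.
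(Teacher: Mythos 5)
Your proposal is correct; note, though, that the paper offers no proof of this Fact at all --- it is stated as a citation to \ci[\S9.3, Cor.\ 13]{neron}, and your two-step devissage (nilpotents giving a connected unipotent kernel, then the normalisation sequence $1\to\m{O}^*_{Y_{\rm red}}\to\nu_*\m{O}^*_{\w{Y_{\rm red}}}\to\m{Q}\to 1$ giving a connected affine kernel, so that ${\rm Pic}^0$ is the preimage of $\prod_k{\rm Pic}^0(\w{\Gamma_k})$) is exactly the structure theory underlying the cited corollary, the same devissage the paper itself invokes later via \ci[\S9.3, Cor.\ 11]{neron} in (\ref{q0}) and Remark \ref{tapt}. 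Your connectedness bookkeeping and the reduction to the geometric fibre (geometric connectedness of the identity component, since it carries the rational point $e$) are sound, so there is nothing to add beyond the pedantic remark that identifying line bundles on $\m{C}_a$ with $k(a)$-points of the Picard scheme uses Hilbert 90 for injectivity, a point the paper also leaves tacit.
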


Each of the morphisms to $C_a$  in diagram (\ref{gh}) comes with an associated 
norm morphism  into ${\rm Pic}(C_a),$  and  with an  associated pull-back morphism from ${\rm Pic}(C_a).$ 
Similarly,
for ${\rm Pic}^0$'s. For the definition and properties of the norm morphism, see \ci[\S6.5]{EGAII} and \ci[\S21.5]{EGAIV.4}.
For a quick reference for the facts we use in this paper,  see also \ci[\S3]{ha-pa}.
See also Fact   \ref{norm}.
 We have    the norm morphism
 \beq\la{nme}
N_p : {\rm Pic} (\m{C}_a) \lorw {\rm Pic}(C), \;\;{\rm Pic}^0 (\m{C}_a) \lorw {\rm Pic}^0(C_a).
 \eeq
We  also have the norm morphisms $N_{\w{p}},
N_{p'}$ and $N_{p''},$ as well as the pull-back morphisms $\w{p}^*, {p'}^*$ and ${p''}^*;$
similarly, for  each of  their $k$-th component.

We end this section with the following  consideration that will play a role later. 

\begin{fact}\la{zeta}
Since $D$ has positive degree $d > 2g-2$ on $C$, we have that, on each $\Gamma_k,$ the line bundle 
 $({p''}^*n_k D)_{|\Gamma_k}$  admits  some nontrivial section $z_k$  with  zero  subscheme $\zeta_k$ supported at a closed  finite non-empty subset of $\Gamma_k$.  We fix such a section, and we obtain the short exact sequences of $\m{O}_{\Gamma_k}$-modules 
\beq\la{rv0}
0 \lorw \m{O}_{\Gamma_k}(-\Gamma_k)   \lorw  \m{O}_{\Gamma_k} \lorw \m{O}_{\zeta_k}  \lorw 0.
\eeq
\end{fact}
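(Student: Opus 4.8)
The plan is to reduce the sequence (\ref{rv0}) to the existence of a single nonzero section of $({p''}^*n_kD)_{|\Gamma_k}$ with finite nonempty zeros, and to produce such a section by pulling one back from the \emph{smooth} base curve $C_a$, where the hypothesis $d>2g-2$ applies directly. First I would pin down the line bundle. By construction, $\Gamma_k$ is the zero divisor on the surface $V(D)\otimes k(a)$ of the global section $\frak{s}_k$ of $\pi^*(n_kD)$, so $\m{O}_{V(D)\otimes k(a)}(\Gamma_k)\cong \pi^*(n_kD)$. Writing $q:=p''_{|\Gamma_k}\colon \Gamma_k\to C_a$ for the finite surjective morphism of degree $n_k$ induced by the bundle projection $\pi$, restriction to $\Gamma_k$ yields a canonical isomorphism $\m{O}_{\Gamma_k}(\Gamma_k)\cong (\pi^*(n_kD))_{|\Gamma_k}=q^*(n_kD)=({p''}^*n_kD)_{|\Gamma_k}=:M$.

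Granting this identification, the claimed sequence (\ref{rv0}) is precisely the divisor sequence attached to the zero scheme $\zeta_k$ of a section of $M$: if $z_k\in H^0(\Gamma_k,M)$ has zero scheme $\zeta_k$, then $M\cong \m{O}_{\Gamma_k}(\zeta_k)$, and the ideal-sheaf sequence $0\to\m{O}_{\Gamma_k}(-\zeta_k)\to\m{O}_{\Gamma_k}\to\m{O}_{\zeta_k}\to0$ becomes (\ref{rv0}) after $\m{O}_{\Gamma_k}(-\zeta_k)\cong M^{-1}\cong\m{O}_{\Gamma_k}(-\Gamma_k)$. So it remains to exhibit a nonzero $z_k$ whose zero scheme is finite and nonempty. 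I would build $z_k$ by pullback. Since $\deg(n_kD)=n_kd\ge d>2g-2=\deg K_{C_a}$ on the smooth curve $C_a$, Serre duality gives $H^1(C_a,n_kD)=0$, and Riemann--Roch gives $h^0(C_a,n_kD)=n_kd-g+1$; as $n_kd>2g-2$ together with $d>0$ forces $n_kd\ge g$, this is positive. I would choose a nonzero $s\in H^0(C_a,n_kD)$ and set $z_k:=q^*s\in H^0(\Gamma_k,M)$.

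Because $q$ is a finite surjective morphism of integral curves, it carries the generic point of $\Gamma_k$ to that of $C_a$, so $q^*s$ is nonzero there; hence $z_k\neq0$. Its zero scheme is the scheme-theoretic preimage $\zeta_k=q^{-1}(\mathrm{div}\,s)$, which is finite, being the preimage of a finite subscheme under the finite map $q$, and nonempty, since $\mathrm{div}\,s\neq\emptyset$ (as $\deg(n_kD)>0$) and $q$ is surjective. This produces the desired $z_k$ and $\zeta_k$, and hence (\ref{rv0}).

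The one point requiring care --- and the reason I route the argument through $C_a$ rather than through $\Gamma_k$ --- is that $\Gamma_k$ is only integral over $k(a)$, not necessarily geometrically integral nor smooth, so one has no a priori grip on its arithmetic genus and cannot simply run Riemann--Roch on $\Gamma_k$ to find sections of $M$. Pulling back from the smooth base $C_a$, where $d>2g-2$ is available verbatim, avoids this entirely and simultaneously forces $\zeta_k$ to be finite (a preimage under a finite map) and nonempty (by surjectivity). I would only make sure to use that $\frak{s}_k$ is a genuine \emph{global} section of $\pi^*(n_kD)$, as recorded in the construction, so that the identification $\m{O}_{\Gamma_k}(\Gamma_k)\cong({p''}^*n_kD)_{|\Gamma_k}$ is legitimate; this identification is exactly what makes the kernel in (\ref{rv0}) come out as $\m{O}_{\Gamma_k}(-\Gamma_k)$.
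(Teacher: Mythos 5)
Your proof is correct and fills in exactly the argument the paper leaves implicit: Fact \ref{zeta} is stated without a written proof, and the intended justification is precisely your route --- identifying $\m{O}_{\Gamma_k}(\Gamma_k)\cong({p''}^*n_kD)_{|\Gamma_k}$ from the defining section $\frak{s}_k$ of $\pi^*(n_kD)$, and producing $z_k$ by pulling back a nonzero section of $n_kD$ from the smooth curve $C_a$, where $n_kd>2g-2$ yields $h^0(C_a,n_kD)=n_kd-g+1>0$ and finiteness/nonemptiness of $\zeta_k$ follow from $q$ being finite and surjective. Your added care on the two genuine subtleties --- that $\Gamma_k$ is only integral (so Riemann--Roch is run on $C_a$, not on $\Gamma_k$) and that $z_k=q^*s$ is nonzero because it is nonzero at the generic point, hence a nonzerodivisor making $\m{O}_{\Gamma_k}(-\Gamma_k)\to\m{O}_{\Gamma_k}$ injective --- is exactly what makes the short exact sequence (\ref{rv0}) legitimate.
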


 \subsection{The fibers of the Hitchin fibrations}\la{mmee} $\;$

Let $a \in A_n$ and let $p: \m{C}_a=: \Gamma= \sum_k m_k\Gamma_k  \to C_a$ be the corresponding spectral cover,
with $n = n_\Gamma = \deg{(p)} = \sum_k n_k m_k  = \rm{rk}_C (p_* \m{O}_\Gamma)$;
see (\ref{gh}).  Let  $j_k: \eta_k \to \Gamma$
be the finitely many generic points in $\Gamma,$ one for each irreducible component $m_k \Gamma_k.$
A coherent sheaf $\m{E}$ on $\Gamma$ is  torsion free iff the natural map
$\m{E} \to \prod_k \m{E}_k$ is injective, where $\m{E}_k = {j_k}_* j_k^* \m{E};$ see \ci[Def. 1.1. and Prop. 1.1]{sc}.
A torsion free $\m{E}$ is said to have ${\rm Rk}_{\Gamma} (\m{E}) = r$ if
its lengths at the generic points satisfy $l_k (\m{E}):=l_{\m{O}_{\eta_k}} (\m{E}_{\eta_k})= r \,m_k,$
for every $k;$ such a rank is then a non-negative rational number, which is zero iff $\m{E}=0.$  A torsion free $\m{E}$ may fail to have a well-defined ${\rm Rk}_\Gamma (\m{E}).$
When  this rank is well-defined,     one defines
the  degree by setting  ${\rm Deg}_\Gamma (\m{E}) := \chi(\m{E})   - {\rm Rk}_\Gamma (\m{E})  \chi (\m{O}_\Gamma).$ 

Let $P_\Gamma =
\prod_k P_{\Gamma_k}^{m_k}$ be the characteristic equation defining $\Gamma.$
A torsion free  coherent sheaf $\m{E}$ on $\Gamma$ correspond, via $p_*,$ to a 
pair $(E,\phi:E\to E(D))$ on $C$, where: $E=p_* \m{E}$ is locally free of rank ${\rm rk}_C(E)=
\sum_k n_k l_k;$  $\phi$ is the twisted endomorphism corresponding to multiplication by $t$
on $\m{E}.$  Then $\phi$  has  characteristic polynomial $P_\phi=\prod_k P_{\Gamma_k}^{l_k}.$ 
It follows that $P_\phi = P_{\Gamma}$ iff  ${\rm Rk}_\Gamma (\m{E})$ is well-defined and equals $1$ (this is the content of
\ci[Prop. 2.1]{sc}). 

  Note that  \ci[\S3.3]{ha-pa} introduces, via Riemann-Roch, a different  notion of rank and degree for every coherent $\m{O}_\Gamma$-module,
even for those torsion-free ones for which the notion of degree given above is not well-defined.
In this paper, we use the notion of rank and degree  given above \ci{sc}, not the one in \ci{ha-pa}. The forthcoming modular description of the fibers of the Hitchin fibration is given in terms of the notions employed in this paper, and the torsion free sheaves on spectral curves that arise are, by necessity,
the ones for which the rank is well-defined and it has value one.

\begin{ex}\la{ew}
 Let $nC= \m{C}_0$ be the spectral curve for the characteristic polynomial $t^n$, i.e. for $a=0 \in A_n.$
 See {\rm \S\ref{spcv}} for the notation.

 For $1 \leq m \leq n$, we consider the curves $mC$, their structural sheaves  $\m{O}_{mC}$
 and their ideal sheaves $\m{I}_{mC,nC} \subseteq \m{O}_{nC}$.
 We have $\chi (\m{O}_{mC}) = -{m\choose 2}d - m(g-1);$ see {\rm (\ref{rr})}.
 We then have: ${\rm Rk}_{nC} (\m{O}_{mC}) = m/n$; ${\rm Rk}_{nC} (\m{I}_{mC, nC}) = 1-m/n$;
 ${\rm Deg}_{nC}(\m{O}_{mC})= \frac{m}{2} (n-m)d$; ${\rm Deg}_{nC}(\m{I}_{mC, nC})= -\frac{m}{2} (n-m)d.$
 We have $P(\m{O}_{mC})=P_C^m,$ $P(\m{I}_{mC,nC})=P_C^{m-n}.$

 Let $E$ be a  stable  vector bundle of rank $n$ and degree $e$
  on $C$; let $i:C\to nC$
 be the natural map induced by the zero section $C \to C\subseteq V$, followed by the closed embedding
 $C= (nC)_{\rm red} \to nC$; we have that ${\rm Rk}_{nC}(i_*E)=1$ and ${\rm Deg}_{nC}(i_*E) =  e + {n \choose 2}d$.  We have $P(i_* E)=P_{nC}=P_C^n.$

 \end{ex}

It is easy to show that in the context of torsion free and  ${\rm Rk}_\Gamma (-)=r$
coherent sheaves on $\Gamma,$  the notion of slope   in  \ci[p.55]{siI} and \ci[Cor. 6.9]{siII},
and the notion   of slope  ${\rm Deg}_\Gamma / {\rm Rk}_\Gamma$, 
yield coinciding  notions of slope stability.   In turn, this coincides with the notion of 
slope-stable   Higgs pair $(p_* \m{E},\phi),$
with slopes defined by taking  $\deg_C / {\rm rk}_C.$ By working with quotients, instead of with
subobjects, the stability condition takes the form (\ref{34}) below.  Define
 \beq\la{e'}
 e':= e + {n \choose 2}d.
 \eeq

 \begin{rmk}\la{compq}
 As pointed out in \ci[Rmk. 4.2]{ch-la}, the statement of \ci[Thm. 3.1]{sc}, which characterizes  stability, needs to be slightly modified {\rm (cf. {\rm (\ref{34})})}.
 \end{rmk}
 \begin{rmk}\la{compqbis}
 Let us point out that one has also to correct some minor  inaccuracies  at the  end of the proof of  \ci[Prop. 2.1, p. 303, from the  top,  to the  end of the proof]{sc}: the degrees on the finite maps from the reduced irreducible components of the spectral curve are omitted from the first two displayed equalities;   the inequality  on the lengths implying that the rank should be one is not justified. One  remedies this minor inaccuracies by means of the discussion 
 at the beginning of this section involving  the role of the characteristic polynomials.
 \end{rmk}

 {\bf Modular description of the Hitchin fiber $M_{n,a}:=h_n^{-1}(a)$, $a\in A_n$.}
  The discussion that follows does not require that one first  proves that $M_n$ is irreducible; in particular,
 it can used in order to establish this fact, as it has been done in \S\ref{aninnov}.
The Hitchin fiber $M_{n,a}:=h_n^{-1}(a)$, i.e. the moduli space 
 of stable $D$-twisted Higgs pairs with  rank $n$ and degree $e$  and with characteristic $a\in A_n$, is isomorphic
 to the  moduli space of torsion free sheaves $\m{E}$ on the spectral curve $\m{C}_a$ with
 ${\rm Rk}_{\m{C}_a}(\m{E})=1$ (and hence  with associated characteristic polynomial $P_\phi=P_{\m{C}_a}$)
  and ${\rm Deg}_{\m{C}_a}(\m{E})= e',$ subject to the following stability condition:
 for every closed subscheme $i_Z: Z\to \m{C}_a$ of pure dimension one, 
 for every  torsion free quotient  $\m{O}_Z$-module $\xymatrix{i_Z^*E \ar@{->>}[r] &\m{E}_Z}$  with ${\rm Rk}_Z (\m{E}_Z)=1$, we have that 
 \beq\la{34}
\frac{ {\rm Deg}_Z (\m{E}_Z)}{{\rm Rk}_C (p_* \m{O}_Z)} +
\frac{1}{2} \left(n - {\rm Rk}_C (p_* \m{O}_Z)   \right) d > \frac{e'
}{n}.
 \eeq
 The isomorphism is given by the push-forward morphism ${p_a}_*$ on coherent sheaves under the finite, flat, degree $n$, spectral cover morphism 
 $p_a:\m{C}_a \to C_a=C\otimes k(a).$
 
 \begin{rmk}\la{rmkre}
 If the spectral curve $\m{C}_a$ is smooth, i.e. for $a \in A_n$ general, then the   fiber 
 $M_{n,a}$ is geometrically connected, for, in view of its modular description, it coincides with ${\rm Pic}^{e'}(\m{C}_a)$.
 \end{rmk}
 
 Let us record the  properties of the norm map that we need.
 \begin{fact}\la{norm}
Let $p_a: \m{C}_a \to C_a$ be  a spectral cover (of degree $n$) with norm map $N_{p_a}: {\rm Pic}^0(\m{C}_a)\to {\rm Pic}^0(C_a)$ and pull-back
map $p_a^*: {\rm Pic}^0(C_a)\to {\rm Pic}^0(\m{C}_a)$. For what follows, see \ci[Cor. 1.3 and \S3]{ha-pa}.

\ben
\item
For every  $L \in {\rm Pic}(C_a)$, we have  $N_{p_a} (p_a^* L) = L^{\otimes n};$ in particular, $N_{p_a}$ is surjective.
\item
Let $\m{E}$ be a torsion free $\m{O}_{\m{C}_a}$-module of some integral rank ${\rm Rk}_{\m{C}_a}(\m{E})=:r$
and let $\m{L} \in {\rm Pic} (\m{C}_a)$;
then $\det ({p_a}_*( \m{E} \otimes \m{L}) ) = \det ({p_a}_* \m{E}) \otimes N_{p_a}(\m{L})^{\otimes r};$
\item 
if  $a \in A_n$ is general, then   $\ke{(N_{p_a})}$ is a (connected) abelian variety
 (see {\rm \S\ref{vbn}}).
\een

\end{fact}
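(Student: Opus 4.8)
The plan is to derive (1) and (2) formally, from the projection formula together with the multiplicativity of the determinant in short exact sequences, and to obtain (3) from the geometry of the general spectral curve. Throughout I use that $p_a:\m{C}_a\to C_a$ is finite and flat of degree $n$ onto the \emph{nonsingular} curve $C_a$, so that the pushforward of any torsion free $\m{O}_{\m{C}_a}$-module is torsion free, hence locally free, on $C_a$, and its determinant is a well-defined line bundle. I will also use the determinantal description of the norm: for a line bundle $\m{M}$ on $\m{C}_a$ one has $N_{p_a}(\m{M})=\det({p_a}_*\m{M})\otimes\det({p_a}_*\m{O}_{\m{C}_a})^{-1}$, which agrees with the norm of \ci[\S6.5]{EGAII} and with the divisorial norm $N_{p_a}(\m{O}_{\m{C}_a}(x))=\m{O}_{C_a}(p_a(x))$ of \ci[\S3]{ha-pa}.

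For (2) I would first establish the formula when $\m{L}=\m{O}_{\m{C}_a}(x)$ for a point $x$ at which $\m{E}$ is locally free of rank $r$; since $\m{E}$ is torsion free, there are only finitely many points to avoid. From the short exact sequence $0\to\m{E}\to\m{E}(x)\to\m{E}(x)|_x\to 0$, which remains exact after the finite pushforward ${p_a}_*$, multiplicativity of $\det$ gives $\det({p_a}_*\m{E}(x))=\det({p_a}_*\m{E})\otimes\det({p_a}_*(\m{E}(x)|_x))$; the last factor is the determinant of a length-$r$ skyscraper supported at $p_a(x)$, hence equals $\m{O}_{C_a}(r\,p_a(x))=N_{p_a}(\m{O}_{\m{C}_a}(x))^{\otimes r}$. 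Writing an arbitrary $\m{L}$ as $\m{O}_{\m{C}_a}(\sum_i\pm x_i)$ with the $x_i$ chosen away from the finite bad locus, and adding or removing one point at a time, the formula follows by induction; independence of the chosen divisor is automatic, since both sides depend only on the class of $\m{L}$. Part (1) is then the case $\m{E}=\m{O}_{\m{C}_a}$, $r=1$, $\m{L}=p_a^*L$: the projection formula gives ${p_a}_*p_a^*L=L\otimes{p_a}_*\m{O}_{\m{C}_a}$, whence $\det({p_a}_*p_a^*L)=L^{\otimes n}\otimes\det({p_a}_*\m{O}_{\m{C}_a})$ and therefore $N_{p_a}(p_a^*L)=L^{\otimes n}$. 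Surjectivity onto ${\rm Pic}^0(C_a)$ follows because the image contains $[n]\,{\rm Pic}^0(C_a)={\rm Pic}^0(C_a)$, the last equality by divisibility of the abelian variety ${\rm Pic}^0(C_a)$.

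For (3), for general $a$ the spectral curve $\m{C}_a$ is nonsingular and connected, so both ${\rm Pic}^0(\m{C}_a)$ and ${\rm Pic}^0(C_a)$ are abelian varieties and, by (1), $N_{p_a}$ is a surjective homomorphism between them. In characteristic zero its kernel is a smooth closed subgroup scheme, whose identity component is therefore an abelian variety. The one delicate point, and the main obstacle, is connectedness of the \emph{full} kernel: the kernel of a surjection of abelian varieties is a priori an extension of a finite group by an abelian variety, and for an étale cover (say a double cover) it genuinely disconnects. Here it does not, and I would deduce this from the fact that for general $a$ the spectral cover $p_a$ is ramified with full symmetric monodromy $S_n$, so that $\m{C}_a\to C_a$ factors through no nontrivial connected étale cover; by the standard criterion (equivalently, $p_a^*$ is injective and $N_{p_a}\circ p_a^*=[n]$ has connected fibers) this forces $\ke(N_{p_a})$ to be connected. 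This is the content of \ci[\S3]{ha-pa}, and is discussed further in \S\ref{vbn}.
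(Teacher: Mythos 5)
The paper offers no proof of Fact \ref{norm}: it is recorded as a fact with a pointer to \ci[Cor. 1.3 and \S3]{ha-pa}, so your attempt can only be judged on its own terms. Your argument for (1) is correct and standard (projection formula plus $\det(L\otimes F)=L^{\otimes n}\otimes\det F$, then divisibility of ${\rm Pic}^0(C_a)$), and your argument for (3) is essentially the right one. Your argument for (2), however, has a genuine gap.

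The gap is the claim that a torsion free $\m{O}_{\m{C}_a}$-module of integral rank $r$ is locally free away from finitely many points, so that the Cartier divisors representing $\m{L}$ can be chosen inside the locally free locus. This is true on reduced curves but fails on non-reduced spectral curves, which is exactly the case this Fact must cover: statement (2) is quantified over all $a\in A_n$ and is invoked in Propositions \ref{kj} and \ref{kh} for every fiber, including the nilpotent cone $a=0$, where $\m{C}_0=nC$. On $nC$, take $\m{E}=i_*E$ with $E$ a rank $n$ bundle on $C$, as in Example \ref{ew}: it is torsion free with ${\rm Rk}_{nC}(\m{E})=1$, yet it is \emph{nowhere} locally free as an $\m{O}_{nC}$-module for $n\geq 2$ (the section $t$ cutting out $C$ acts as zero on $\m{E}$ but is nonzero in every local ring of $nC$, whereas it acts with nonzero image on any nonzero free module). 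For such $\m{E}$ there is no ``finite bad locus'' to avoid, and your induction never gets started; note the statement itself remains true here, since $N_{p_0}(\m{L})=(\m{L}|_C)^{\otimes n}$ and $i_*E\otimes\m{L}=i_*(E\otimes \m{L}|_C)$ — it is only the proof that breaks.

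The mechanism can be repaired, but the repair is where the hypothesis ${\rm Rk}_{\m{C}_a}(\m{E})=r$ (which your proof never actually uses beyond local freeness) must enter. For \emph{any} effective Cartier divisor $D=Z(f)$, $f$ is a nonzerodivisor on every torsion free $\m{E}$ ($\m{C}_a$ is Cohen--Macaulay, so $f$ is a unit at each generic point), whence $0\to\m{E}\to\m{E}(D)\to\m{E}(D)|_D\to 0$ is always exact; one then needs the local multiplicity identity ${\rm length}_y\bigl({p_a}_*(\m{E}/f\m{E})\bigr)=r\cdot{\rm length}_y\bigl({p_a}_*(\m{O}_{\m{C}_a}/f)\bigr)$ at each $y\in C_a$, which follows by d\'evissage (additivity of the Herbrand quotient, torsion modules contributing zero) using precisely that the generic lengths are $l_k(\m{E})=r\,m_k$ on every component. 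With this, your exact-sequence computation goes through for arbitrary $D$, and writing $\m{L}$ as a difference of effective Cartier divisors via an ample twist finishes the proof. Finally, in (3) the parenthetical criterion ``$N_{p_a}\circ p_a^*=[n]$ has connected fibers'' is wrong as stated ($[n]$ has finite kernel); the correct statement, from \ci[\S3]{ha-pa} and as used in \S\ref{vbn}, is that $\pi_0(\ke{(N_{p_a})})$ is controlled by $\ke{(p_a^*)}\subseteq{\rm Pic}^0(C_a)$, which your full-monodromy argument does kill: a transitive subgroup generated by transpositions is $S_n$, and $S_{n-1}\subset S_n$ is maximal, so the general spectral cover factors through no nontrivial intermediate cover.
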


\begin{pr}\la{kj}
The projective, $D$-twisted, $GL_n$ Hitchin morphism  $h_n:M_n \to A_n$ is surjective, with geometrically connected fibers,
flat of pure relative dimension 
\beq\la{rr}
d_{h_n} =  {n\choose 2} d + n(g-1) +1.
\eeq 
Let $a \in A_n$. Then ${\rm Pic}^0(\m{C}_a)$ acts on the Hitchin fiber $M_{n,a}$.
If the spectral 
curve $\m{C}_a$ is smooth, then  the corresponding Hitchin  fiber $M_{n,a}\cong {\rm Pic}^{e'}(\m{C}_a)$ is smooth, and 
 a ${\rm Pic}^0(\m{C}_a)$-torsor via tensor product.
 \end{pr}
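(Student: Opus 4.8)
The plan is to read off every assertion from the modular description of the fibers in \S\ref{mmee}, combined with the smoothness and dimension count for $M_n$ from \S\ref{aninnov}. First I would identify the number $d_{h_n}$ as $\dim M_n - \dim A_n$. Since each $iD$ has degree $id > 2g-2$, Serre duality gives $h^1(C,iD)=0$, so $h^0(C,iD) = id + 1 - g$ and $\dim A_n = \sum_{i=1}^n(id+1-g) = {n+1 \choose 2}d + n(1-g)$; subtracting this from $\dim M_n = n^2d+1$ yields exactly ${n\choose 2}d + n(g-1)+1$, which is (\ref{rr}). As a check, for smooth $\m{C}_a$ the fiber is ${\rm Pic}^{e'}(\m{C}_a)$, whose dimension is the genus $g(\m{C}_a) = 1 - \chi(\m{O}_{\m{C}_a})$; since $p_{a*}\m{O}_{\m{C}_a} = \bigoplus_{i=0}^{n-1}D^{-i}$, one recomputes the same value. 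For flatness and purity I would invoke that the projective $h_n$ has equidimensional fibers of this dimension, \ci[Corollaire 8.2]{ch-la}: as $M_n$ is smooth, hence Cohen--Macaulay, and $A_n$ is a nonsingular affine space, miracle flatness then forces $h_n$ to be flat, and flat with equidimensional fibers is precisely flat of pure relative dimension.

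Next, for surjectivity, $h_n$ is projective, hence proper, so its image is closed in the irreducible $A_n$; for general $a$ the spectral curve $\m{C}_a$ is smooth (generic smoothness in characteristic zero, $\m{C}$ being nonsingular), and then the fiber is ${\rm Pic}^{e'}(\m{C}_a)\neq\emptyset$ (Remark \ref{rmkre}), so the image contains the generic point and therefore all of $A_n$. For connectedness I would form the Stein factorization $h_n = g\circ f$ with $f$ proper having geometrically connected fibers and $g$ finite: the generic fiber of $h_n$ is geometrically connected, being ${\rm Pic}^{e'}$ of a smooth connected curve, so $g$ has generic degree one, i.e. is birational; a finite birational morphism onto the normal variety $A_n$ is an isomorphism, whence $h_{n*}\m{O}_{M_n} = \m{O}_{A_n}$ and all geometric fibers of $h_n$ are connected.

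For the action, the modular description presents $M_{n,a}$ as a moduli space of torsion-free $\m{O}_{\m{C}_a}$-modules $\m{E}$ with ${\rm Rk}_{\m{C}_a}(\m{E})=1$, ${\rm Deg}_{\m{C}_a}(\m{E})=e'$, and stability (\ref{34}), and ${\rm Pic}^0(\m{C}_a)$ acts by $\m{L}\cdot\m{E}:=\m{L}\otimes\m{E}$. Tensoring by a line bundle preserves torsion-freeness and the generic lengths, so the rank stays $1$; by Fact \ref{piczero} such an $\m{L}$ restricts to degree zero on every component of $\m{C}_{\ov{a}}$, hence $\chi(\m{L}\otimes\m{E})=\chi(\m{E})$ and ${\rm Deg}_{\m{C}_a}$ is unchanged; and the same invariance shows that (\ref{34}) is preserved, so the action is well defined. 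When $\m{C}_a$ is smooth, hence integral, a torsion-free rank-one sheaf is a line bundle, the stability (\ref{34}) is automatic, and $M_{n,a} = {\rm Pic}^{e'}(\m{C}_a)$ is smooth; the tensor action of ${\rm Pic}^0(\m{C}_a)$ on it is the classical simply transitive one, exhibiting $M_{n,a}$ as a ${\rm Pic}^0(\m{C}_a)$-torsor.

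The main obstacle is the flatness/purity claim. Everything else reduces to a dimension count, a Stein-factorization argument using the normality of $A_n$, and the elementary verification that tensoring by a degree-zero line bundle leaves the numerical and stability data of the modular description untouched; by contrast, the fact that no fiber over the singular or non-integral locus acquires excess dimension is genuinely delicate and is exactly the content of \ci[Corollaire 8.2]{ch-la} that I would cite rather than reprove.
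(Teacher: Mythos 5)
Your proposal is correct and takes essentially the same route as the paper's proof: the torsor structure over the dense locus of smooth spectral curves gives dominance and hence surjectivity, generic connectedness plus normality of $A_n$ gives connectedness of all fibers (your Stein factorization is just the standard implementation of the paper's direct appeal to Zariski's main theorem), and both arguments cite \ci[\S 8, Cor.]{ch-la} for equidimensionality of the fibers before concluding flatness by miracle flatness from the nonsingularity of $M_n$ and $A_n$. The only cosmetic deviation is that you justify invariance of ${\rm Deg}_{\m{C}_a}$ under tensoring via Fact \ref{piczero}, where the paper invokes Fact \ref{norm}.(2).
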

\begin{proof}
In view of the modular description
of $M_{n,a}$, it is clear that Fact \ref{norm}.(2)  implies that, for every $a \in A_n$, ${\rm Pic}^0(\m{C}_a)$ acts on $M_{n,a}$
via tensor product  (degree and stability are preserved), and that, when $\m{C}_a$ is smooth, this  action turns $M_{n,a}$ into the 
${\rm Pic}^0(\m{C}_a)$-torsor ${\rm Pic}^{e'}(\m{C}_a)$.
 Since the locus of characteristics in $A_n$ yielding a smooth spectral curve is open and dense in $A_n$, we conclude that $h_n$ is  dominant. Since $h_n$ is projective, it is  also surjective.
The same line of argument implies that the general fiber of $h_n$ is geometrically connected.
On the other hand, since $A_n$ is nonsingular, hence normal, Zariski main theorem implies that $h_n$ has 
geometrically connected fibers.
In view of  \ci[\S8,  Cor.]{ch-la}, the morphism  $h_n:M_n \to A_n$ is of pure relative
dimension the arithmetic genus of the spectral curves, which can be easily shown to be (\ref{rr}).
Since $M_n$ and $A_n$ are nonsingular, the pure-relative-dimension morphism $h_n$ is flat.
\end{proof}

\begin{rmk}\la{no} {\rm ({\bf No line bundles in the nilpotent cone when $(e,n)=1$})}
 The fiber $M_{n,0}$ over the origin does not contain line bundles. In fact, the spectral curve is
 of the form $nC$ (given by $t^n=0$ on the surface $V(D)$, a non-reduced curve with multiple structure  
 of multiplicity $n,$ and with  reduced curve $C;$ 
  it follows that every line bundle on it has degree ${\rm Deg}_{nC}$  a multiple of $n$; since the required degree is $e'= e+ {n \choose 2}d$ and $(e,n)=1$,  in general, there is no such line bundle (e.g. if $n$ is odd or if $d$ is even). By way of contrast,  if the spectral curve $\m{C}_a$
  is geometrically integral, then ${\rm Pic}^{e'}(\m{C}_a) \subseteq M_{n,a}$ is an integral,  Zariski-dense, open  subvariety.
  Finally, if we arrange for $e'=0$ (in which case, we may not have the coprimality of the pair $(e,n)$),
  one sees that, for every $a \in A_n$, the variety  ${\rm Pic}^0(\m{C}_a)$ is open  in $M_{n,a}$; see \ci[Cor. 5.2]{sc}.
 \end{rmk}

 {\bf Modular description of the Hitchin fiber $\check{h}_n^{-1}(a)$, $a\in \check{A}_n$.}
 The description in question is the same as the modular description given above, except for the  added constraint 
 on the determinant $\det ({p_a}_*\m{E}) = \e,$ where $\e \in {\rm Pic}^e(C)$ is the fixed line bundle 
 involved in the definition (\ref{hitm}) of $\check{M}_n$.
 
 \begin{defi}\la{defprym}
 {\rm ({\bf The Prym variety of a spectral cover})}
Let $a \in A_n$ and set
\beq\la{rb}
{\rm Prym}_{a}:= \ke \,\left\{ { N_{ p_a}}: {\rm Pic}^0 (\m{C}_a) \to 
{\rm Pic}^0(C_a)\right\}.
\eeq
\end{defi}
In general, the Prym variety ${\rm Prym}_a$  is a disconnected group scheme
with finitely many components; see \ci{ha-pa} for a description of these components at geometric points of $A_n$.
We also call Prym variety the corresponding  identity connected component. In a given context, we shall make it clear which
Prym variety we are using. 

If $a \in A_n$ is general,  then ${\rm Prym}_{a}$
is  geometrically connected (Fact \ref{norm}.(3)).

 \begin{pr}\la{kh}
The projective, $D$-twisted, $SL_n$ Hitchin morphism  $\check{h}_n: \check{M}_n \to \check{A}_n$ is surjective, with geometrically  connected fibers, flat of pure relative dimension
\beq\la{wdv}
d_{\check{h}_n} = d_{h_n} -g ={n\choose 2} d + (n-1)(g-1).
\eeq 
Let $a\in \check{A}_n$. Then  ${\rm Prym}_{a}$ acts on the Hitchin fiber $\check{M}_{n,a}$.
If the spectral 
curve $\m{C}_{a}$ is smooth, then ${\rm Prym}_{a}$ is connected,  the corresponding $SL_n$ Hitchin  fiber $\check{M}_{n,a}$ is smooth, and 
a ${\rm Prym}_{a}$-torsor via tensor product.
 \end{pr}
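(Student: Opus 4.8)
The plan is to work fiberwise over $\check{A}_n$ and to realize each $SL_n$ Hitchin fiber $\check{M}_{n,a}$ inside the corresponding $GL_n$ fiber $M_{n,a}$ as the locus where the determinant equals $\e$, so as to leverage Proposition \ref{kj} together with the norm-map formalism of Fact \ref{norm}. By the two modular descriptions, $\check{M}_{n,a}$ is cut out in $M_{n,a}$ by the single condition $\det({p_a}_*\m{E})=\e$. First I would pin down the symmetry group: by Fact \ref{norm}.(2) applied with ${\rm Rk}=1$, tensoring $\m{E}$ by $\m{L}\in{\rm Pic}^0(\m{C}_a)$ multiplies $\det({p_a}_*\m{E})$ by $N_{p_a}(\m{L})$; hence the subgroup of ${\rm Pic}^0(\m{C}_a)$ preserving the determinant constraint, and therefore acting on $\check{M}_{n,a}$, is exactly $\ker N_{p_a}={\rm Prym}_a$ (Definition \ref{defprym}). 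This establishes the asserted ${\rm Prym}_a$-action.

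For the smooth case, when $\m{C}_a$ is smooth one has $M_{n,a}\cong {\rm Pic}^{e'}(\m{C}_a)$ by Proposition \ref{kj}. Fixing a base point and writing a line bundle as $\m{L}_0\otimes\m{M}$ with $\m{M}\in{\rm Pic}^0(\m{C}_a)$, Fact \ref{norm}.(2) identifies the determinant map $\m{L}\mapsto\det({p_a}_*\m{L})$ with the translate $\m{M}\mapsto\det({p_a}_*\m{L}_0)\otimes N_{p_a}(\m{M})$ of the norm map. Since $N_{p_a}$ is surjective (Fact \ref{norm}.(1)), the fiber $\check{M}_{n,a}=\det^{-1}(\e)$ is nonempty and is a torsor under $\ker N_{p_a}={\rm Prym}_a$, in particular smooth, of dimension $\dim{\rm Prym}_a=\dim{\rm Pic}^0(\m{C}_a)-g=d_{h_n}-g$, matching (\ref{wdv}). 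The connectedness of ${\rm Prym}_a$ for smooth $\m{C}_a$ is the one genuinely delicate point: it amounts to the injectivity of $p_a^*:{\rm Pic}^0(C_a)\to{\rm Pic}^0(\m{C}_a)$, equivalently to $p_a$ not factoring through a nontrivial connected \'etale cover of $C_a$, and I would deduce it from the description of the components of the Prym in \S\ref{vbn} and \ci{ha-pa} (a smooth spectral curve being in particular elliptic, this is consistent with Fact \ref{norm}.(3)).

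It remains to treat surjectivity, pure relative dimension, flatness, and connectedness for arbitrary $a\in\check{A}_n$. Here I would exploit the ${\rm Pic}^0(C)$-action $L\cdot(E,\phi)=(E\otimes L,\phi\otimes{\rm Id}_L)$ on $M_n$ from Lemma \ref{rt6}: it preserves each characteristic $a$, hence restricts to $M_{n,a}$, and it intertwines $\det|_{M_{n,a}}\colon M_{n,a}\to{\rm Pic}^e(C)$ with the action $M\mapsto M\otimes L^{\otimes n}$ on the target. Since multiplication by $n$ is surjective on the abelian variety ${\rm Pic}^0(C)$, this target action is transitive, so $\det|_{M_{n,a}}$ is a ${\rm Pic}^0(C)$-equivariant homogeneous fibration: it is surjective with mutually isomorphic fibers. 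As $M_{n,a}$ is nonempty and of pure dimension $d_{h_n}$ (Proposition \ref{kj}), every fiber, in particular $\check{M}_{n,a}=\det^{-1}(\e)$, is nonempty and of pure dimension $d_{h_n}-g=d_{\check{h}_n}$. This simultaneously yields surjectivity of $\check{h}_n$ and its pure relative dimension (\ref{wdv}); flatness then follows from ``miracle flatness'', since $\check{M}_n$ and $\check{A}_n$ are nonsingular. Finally, for generic $a$ the fiber $\check{M}_{n,a}$ is a torsor under the connected ${\rm Prym}_a$, hence geometrically connected, and the Stein factorization / Zariski main theorem argument over the normal base $\check{A}_n$ -- exactly as for $h_n$ -- upgrades this to geometric connectedness of all fibers. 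The main obstacle is the connectedness of ${\rm Prym}_a$ over the smooth locus; the a priori delicate task of controlling $\det|_{M_{n,a}}$ along singular spectral curves is dissolved by this ${\rm Pic}^0(C)$-symmetry.
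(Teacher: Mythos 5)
Your overall architecture coincides with the paper's proof: you realize $\check{M}_{n,a}=\frak{p}_a^{-1}(\e)$ for $\frak{p}_a=\det\circ\,{p_a}_*$, use Fact \ref{norm}.(2) to identify ${\rm Prym}_a=\ker (N_{p_a})$ as exactly the subgroup of ${\rm Pic}^0(\m{C}_a)$ preserving the determinant constraint, exploit the ${\rm Pic}^0(C)$-equivariance of $\frak{p}_a$ to get surjectivity of $\check{h}_n$ and pure fiber dimension $d_{h_n}-g$, conclude flatness by miracle flatness over the nonsingular $\check{A}_n$, and obtain geometric connectedness of all fibers by Zariski's main theorem from the general fiber, which is a torsor under the connected (Fact \ref{norm}.(3)) ${\rm Prym}_a$. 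All of this is correct and is precisely the paper's route.

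The one point where you diverge is the connectedness of ${\rm Prym}_a$ for \emph{every} smooth spectral curve, and there your argument has a genuine (though fillable) gap. You correctly reduce it to injectivity of $p_a^*$, equivalently to $p_a$ not factoring through a nontrivial connected \'etale cover of $C$, and you then defer to \S\ref{vbn} and the reference \ci{ha-pa}. But the cited material only \emph{characterizes} the endoscopic locus as the locus where the normalization of the spectral curve factors through an \'etale cyclic cover; it does not assert that a smooth spectral curve never so factors, and the positive codimension (\ref{3ed}) of the endoscopic loci does not by itself preclude smooth spectral curves from lying on them. Closing your route would require an additional geometric argument, e.g.: if $p_a$ factored through an \'etale $\pi:C'\to C$ of degree $m>1$, the pullback of $\m{C}_a$ to $V(\pi^*D)$ would be smooth (\'etale over $\m{C}_a$) yet would split into $m$ distinct Galois translates whose pairwise resultants are nonzero sections of line bundles of positive degree (here $d>0$ is used), forcing the translates to intersect and the pullback to be singular, a contradiction. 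The paper sidesteps all of this by reversing the logical order, and you in fact already hold both ingredients: once Zariski's main theorem gives geometric connectedness of \emph{all} fibers of $\check{h}_n$, and once the fiber over a smooth-spectral-curve point is known to be a ${\rm Prym}_a$-torsor (your second paragraph), the connectedness of that fiber forces ${\rm Prym}_a$ itself to be connected --- a two-line bootstrap that dissolves what you flagged as the main obstacle, with no recourse to the component description of the Prym.
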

\begin{proof}
By Proposition \ref{kj},  for every $a\in \check{A}_n$, the $GL_n$ Hitchin fiber  $M_{n,a} \neq \emptyset.$
There is the natural morphism 
\beq\la{fr9}
\frak{p}_{a}:=\det \circ \, {p_{a}}_*: M_{n, a} \lorw {\rm Pic}^e(C).
\eeq
In view of the modular description of the $SL_n$ Hitchin fiber $\check{M}_{n, a}$, we have that
$\check{M}_{n, a}= {\frak p}_{a}^{-1} (\e)$.

\n
The morphism ${\frak p}_{a}$  is equivariant for the ${\rm Pic}^0(C)$-actions  given
by $L \cdot \m{E} := \m{E} \otimes L$ on the domain and by $L \cdot M:= M \cdot L^{\otimes n}$ on the target (Fact \ref{norm}.(2),(1)). It follows that ${\frak p}_{a}$ is surjective. In particular,  for every $a\in \check{A}_n$, $\check{M}_{n,  a} \neq \emptyset$,
so that $\check{h}_n$ is surjective.

\n
By Zariski main theorem, in order to check that $\check{h}_n$ has geometrically connected fibers, it is enough
to do so at a general point. We do this next.

\n
Since $\check{M}_{n, a}= {\frak p}_{a}^{-1} (\e)$, Fact \ref{kj}.(2) implies that
${\rm Prym}_{a}\subseteq {\rm Pic}^0(\m{C}_{a}) $ is the largest subgroup 
acting on $\check{M}_{n, a}.$ More precisely, if $\m{E} \in \check{M}_{n, a}$
and  $L \in  {\rm Pic}^0(\m{C}_{a})$, then $\m{E} \otimes \m{L}  \in \check{M}_{n, a}$
iff $\m{L} \in {\rm Prym}_{a}$.

\n
Let $a\in \check{A}_n$ be a  traceless characteristic yielding 
a nonsingular spectral curve $\m{C}_{a}$.  Since $M_{n, a}$ is a
${\rm Pic}^0(\m{C}_{a})$-torsor by Proposition \ref{kj},  we deduce that $\check{M}_{n,a}$ is s a
${\rm Prym}_{a}$-torsor.  For $a\in \check{A}_n$ general, ${\rm Prym}_{a}$  is geometrically connected by Fact \ref{norm}.(3),  then so  is the general fiber
  $\check{M}_{n, a},$ and, as anticipated, $\check{h}_n$ has thus  geometrically connected fibers. 
  
\n
Since all fibers of $\check{h}_n$ are  now known to be geometrically connected, so is the fiber 
$\check{M}_{n, a}$ corresponding to a smooth spectral curve. Since such a fiber is a ${\rm Prym}_{a}$-torsor,  the Prym variety ${\rm Prym}_{a}$ is also geometrically connected.

\n
Finally, 
since the morphism $\frak p_{a}$ is flat,  and the morphism $h_n$ is of pure dimension
(\ref{rr}), all the fibers of $\check{h}_n$ are of pure dimension $(\ref{rr})$ minus $g$, hence (\ref{wdv}) holds.
The  flatness of $\check{h}_n$ follows by this and  by the smoothness of $\check{M}_n$ and of $\check{A}_n.$
\end{proof}


\subsection{Endoscopy loci of the Hitchin $SL_n$ fibration
}\la{vbn}
$\;$


Let $a$ be a goemetric point of $A_n^{\rm ell}$, so that the spectral curve $\m{C}_a$ is (geometrically) integral. The $D$-twisted, 
$GL_n$ Hitchin fiber
$M_{n,a}$ is also  integral: it is isomorphic to the compactified Jacobian of the   integral locally planar  
spectral curve, parameterizing rank one and degree $e'$ torsion free coherent sheaves on it.
 In particular, the regular part ${\rm Pic}^{e'}(\m{C}_a) \cong M_{n,a}^{\rm reg} \subseteq M_{n,a}$ of this fiber is 
 integral, 
Zariski open and  dense in the whole fiber, and  it is a ${\rm Pic}^0(\m{C}_a)$-torsor.

Let  $a$ be a geometric point of $\check{A}_n$.  Then the $D$-twisted, $SL_n$ Hitchin fiber   $\check{M}_{n,a} =
{\frak p}_{a}^{-1}(\e)$  (cf. (\ref{fr9})), and it  is (geometrically) connected. 
Since the morphism $\check{h}_n$ is flat and $\check{A}_n$ is nonsingular, every fiber of $\check{h}_n$
is a local complete intersection (l.c.i).

Assume, in addition, that   $a$ is a geometric point of  $\check{A}_n^{\rm ell}$. 
By  the ${\rm Pic}^0(C)$-equivariance of $\frak p_{a}$, the regular part
of $\check{M}_{n,a}$   satisfies  $\check{M}_{n,a}^{\rm reg}= M_{n,a}^{\rm reg} \cap \check{M}_{n,a}$, and it is Zariski open and dense.  Since the fiber $\check{M}_{n,a}$ is a l.c.i., we have that, being smooth on a  Zariski-dense open subset, it is  also reduced. The regular part  $\check{M}_{n,a}^{\rm reg}$ is made of  line bundles $\m{E}$ on the spectral curve with ${\frak p}_{a} (\m{E})=\e$.
It is clear  that $\check{M}_{n,a}^{\rm reg}$ is then a ${\rm Prym}_{a}$-torsor.

\begin{fact}\la{num}
Let $a\in \check{A}_n^{\rm ell}$.
The discussion above implies  that  the number of irreducible components of the pure dimensional
and reduced $\check{M}_{n,a}$ 
coincides with the number of connected components of ${\rm Prym}_{a}$.
\end{fact}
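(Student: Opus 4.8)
The plan is to turn the statement into a chain of three equalities of (finite) cardinalities, passing from the irreducible components of $\check{M}_{n,a}$, to the connected components of its regular part $\check{M}_{n,a}^{\rm reg}$, and finally to the connected components of ${\rm Prym}_{a}$. The three structural facts already secured above are exactly what is needed: that $\check{M}_{n,a}$ is reduced and of pure dimension $d_{\check{h}_n}$, that $\check{M}_{n,a}^{\rm reg}$ is Zariski open and dense in $\check{M}_{n,a}$, and that $\check{M}_{n,a}^{\rm reg}$ is a ${\rm Prym}_{a}$-torsor.

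First I would dispose of the torsor end of the chain. Since $a$ is a geometric point, everything lives over an algebraically closed field, so the nonempty torsor $\check{M}_{n,a}^{\rm reg}$ carries a rational point $\m{E}_0$, and the orbit map $L \mapsto \m{E}_0 \otimes L$ is an isomorphism of schemes ${\rm Prym}_{a} \to \check{M}_{n,a}^{\rm reg}$; in particular the two have the same number of connected components. Being a torsor under the group scheme ${\rm Prym}_{a}$, which is smooth in characteristic zero, $\check{M}_{n,a}^{\rm reg}$ is itself smooth, so that its connected components coincide with its irreducible components; moreover, again via the isomorphism with the equidimensional ${\rm Prym}_{a}$, each of these components is irreducible of one and the same dimension, which is $d_{\check{h}_n}$ because $\check{M}_{n,a}^{\rm reg}$ is dense in the pure-dimensional $\check{M}_{n,a}$.

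Next I would match the irreducible components of $\check{M}_{n,a}^{\rm reg}$ with those of $\check{M}_{n,a}$ by taking Zariski closures. Writing $U_1,\dots,U_r$ for the connected (equivalently, irreducible) components of $\check{M}_{n,a}^{\rm reg}$, density gives $\check{M}_{n,a}=\overline{\check{M}_{n,a}^{\rm reg}}=\bigcup_i \overline{U_i}$, a finite union of irreducible closed subsets, each of the maximal dimension $d_{\check{h}_n}$. If $\overline{U_i}\subseteq \overline{U_j}$, then equality of dimensions forces $\overline{U_i}=\overline{U_j}$, whence the two disjoint dense opens $U_i,U_j$ of this common irreducible space would have to meet, a contradiction unless $i=j$; so the closures are pairwise incomparable and distinct, hence are precisely the irreducible components of $\check{M}_{n,a}$, and the assignment $U_i\mapsto\overline{U_i}$ is a bijection onto them (surjectivity holds because any component of $\check{M}_{n,a}$ meets the dense $\check{M}_{n,a}^{\rm reg}$). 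Combining the two steps yields the asserted equality of cardinalities. The only point that demands genuine care, rather than bookkeeping, is this closure argument: it uses reducedness (so that $\check{M}_{n,a}^{\rm reg}$ is dense and its closures reconstruct the components) together with purity of dimension (so that no lower-dimensional component can hide outside $\check{M}_{n,a}^{\rm reg}$ and no $\overline{U_i}$ can be nested in another)—both inputs available precisely because $\check{h}_n$ is flat over the nonsingular base $\check{A}_n$, making every fiber a reduced l.c.i.
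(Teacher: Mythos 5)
Your proof is correct and takes essentially the same approach the paper intends: Fact \ref{num} is stated as a direct consequence of the preceding discussion (reducedness and pure dimensionality of $\check{M}_{n,a}$, density of the open $\check{M}_{n,a}^{\rm reg}$, and its structure as a ${\rm Prym}_{a}$-torsor), and your two steps---identifying irreducible components of the fiber with connected components of its dense smooth open part via closures, then transporting these to $\pi_0({\rm Prym}_{a})$ via the torsor isomorphism over the algebraically closed residue field---are exactly the bookkeeping the paper leaves implicit.
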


For every $a\in A_n$, the group of connected components $\pi_0({\rm Prim}_{a})$ is described 
in \ci[Thm. 1.1]{ha-pa}.
The locus  $\check{A}_{n,{\rm endo}}\subseteq \check{A}_n$ over which  ${\rm Prym}_{a}$ is disconnected is called the endoscopic locus of the  $SL_n$ Hitchin fibration and it is  described
in \ci[\S5, especially Lemma 5.1;  Lemma 7.1]{ha-pa}:
\beq\la{340}
\check{A}_{n,{\rm endo}} = \bigcup_\Gamma \check{A}_{n,\Gamma},
\eeq
where: $\Gamma$ ranges over the finite set of  cyclic subgroups of ${\rm Pic}^0(C)[n]$
of prime number order. Each $\check{A}_{n,\Gamma} \subseteq \check{A}_n$ is  a geometrically integral 
subvariety.
The codimension   of each $\check{A}_{n,\Gamma}$ can be  computed in the same way as in the proof of \ci[Lemma 7.1]{ha-pa},
whose proof in the case   $D=K_C$, remains valid for $D$: we need 
the knowledge
of $d_{\check A_n}$  (\ref{ecce}), obtained by Riemann-Roch, and  the formula directly above \ci[Lemma 5.1)]{ha-pa}.
The resulting value
\beq\la{3ed}
{\rm codim}_{\check{A}_n}( \check{A}_{n,\Gamma}) =
\frac{1}{2} 
 \left( n - \nu \right) \left\{  (n+\nu) d + \left[ d - 2(g-1)\right]  \right\}, \qquad (\nu:= n/\#(\Gamma)),
\eeq
is  strictly positive in view, for example,  of our assumption $d > 2(g-1)$. 

The subvarieties $\check{A}_{n,\Gamma}^{\rm ell}: = \check{A}_{n,\Gamma} \cap  \check{A}_n^{\rm ell}
\subseteq \check{A}_n^{\rm ell}$ 
are nonsingular and  mutually disjoint \ci[Prop. 10.3]{ngoend}.
By construction, the number  
\beq\la{e45}
o(\Gamma):=\# \left(\pi_0({\rm Prym}_{a})\right)
\eeq
 of connected components
of ${\rm Prym}_{a}$
is independent
of $a\in\check{A}_{n,\Gamma}^{\rm ell}$.

A point  $a\in \check{A}_{n,\Gamma}^{\rm ell}$ iff the spectral cover $p_a : \m{C}_{a} \to C$
has the property that the induced  morphism 
from the normalization of the integral spectral curve $\w{p_{a}}= \w{\m{C}_{a}} \to C$
factors through  the \'etale cyclic cover of $C$ associated with $\Gamma$ (cf \ci[Proof of Thm. 5.3]{ha-pa}).

The locus
\beq\la{endell}
\check{A}_{n,{\rm endo}}^{\rm ell} = \coprod_\Gamma  \check{A}_{n,\Gamma}^{\rm ell}
\eeq
is the $G=SL_n$ endoscopic locus introduced by Ng\^o in \ci[\S10]{ngoend} for  $D$-twisted, $G$ Hitchin fibrations ($G$ reductive). It determines the socle ${{\socle}} ({R{{\check h}_n}}_* \oql) \cap \check{A}_n^{\rm ell} $ over the elliptic locus; see \S\ref{zel}.

\subsection{Weak abelian fibrations and $\delta$-regularity}\la{w2}$\;$

The notion of $\delta$-regular weak abelian fibration has been introduced in \ci{ngofl} as an encapsulation of some
important features of the Hitchin fibration over the elliptic locus: presence of the  action 
of a commutative smooth group scheme with affine stabilizers, polarizability of the associated Tate module, $\delta$-regularity of the group scheme. See also  \ci{ngoaf} for an introduction to this circle of ideas.

In this section, let $g:J\to A$ be  a smooth commutative group scheme over  an irreducible  variety $A$ such that
$g$ has   geometrically connected fibers. 

{\bf Chevalley devissage.}
References for what follows are, for example:
  \ci[Thm. 10.25,  Prop. 10.24, Prop 10.5 (and its proof), Prop. 10.3]{milne} and \ci{bccd}.

Let $\ov{a}$  be  a geometric point on $A$ with underlying point a Zariski point  $a \in A.$
Let $J_{\ov{a}}$ be the fiber of $J$ at $\ov{a}$.
There is a canonical  short exact sequence of commutative connected  group schemes
 over the residue field of $\ov{a}$:
\beq\la{cbr}
0 \to J_{\ov{a}}^{\rm aff} \to J_{\ov{a}} \to J_{\ov{a}}^{\rm ab} \to 0,
\eeq
where $J_{\ov{a}}^{\rm aff} \subseteq J_{\ov{a}}$ is the maximal connected affine linear subgroup 
of $J_{\ov{a}}$, and $J_{\ov{a}}^{\rm ab}$ is 
an abelian variety. The dimensions of these varieties depend only on the Zariski point $a \in A,$ and are denoted by 
$d_a^{\rm aff} (J)$ and $d_a^{\rm ab} (J),$ respectively. Clearly, 
\beq\la{dzx}
d_a (J) = d_a^{\rm aff} (J) + d_a^{\rm ab} (J).
\eeq

{\bf The notion of $\delta$-regularity.}
The function
\beq\la{rf0}
\delta: A \lorw \zed^{\geq 0}, \qquad a \mapsto \delta_a: = d_a^{\rm aff}
\eeq
is upper semi-continuous (jumps up on closed subsets); see \ci[X, Rmk. 8.7]{sga3.2.2}.
We have the  disjoint union decomposition 
\beq\la{ds}
A= \coprod_{\delta \geq 0} S_{\delta}, \qquad S_\delta = S_\delta (J/A):= \left\{ a \in A\, |\; \delta_a = \delta\right\}
\eeq
of $A$ into  locally closed subvarieties of $A$. We call $S_\delta$ the $\delta$-locus of $J/A.$

\begin{defi}\la{delrg}   {\rm ({\bf $\delta$-regularity})}
We say that $g:J\to A$ is $\delta$-regular if 
\beq\la{bt}
{\rm codim}_A (S_\delta) \geq \delta, \qquad \forall \delta \geq 0,
\eeq
where one requires the inequality to hold for every irreducible component of $S_\delta$.
\end{defi}

The following lemma is an immediate consequence of the  upper-semicontinuity of the function $\delta$  and of 
the identity (\ref{dzx}).
\begin{lm}\la{gdr} A group scheme $g:J \to A$ as above is $\delta$-regular if and only if either of the two
 following equivalent conditions hold

\ben
\item
for every closed irreducible subvariety
 $Z \subseteq A$: let $\delta_Z$ be the minimum value of $\delta$ on $Z$
(it is attained at general points of $Z$, as well as at the generic point of $Z$); then ${\rm codim}_A (Z) \geq \delta_Z$;

\item
for every point $a \in A$, let $d_a:= \dim \ov{\{a\}}$; let $d_A:= \dim (A)$; then 
\beq\la{r4x}
d^{\rm ab}_a (J) \geq d_a(J) - d_A + d_a.
\eeq
\een
\end{lm}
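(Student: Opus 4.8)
The plan is to unwind all three formulations into a single pointwise codimension inequality, so that the equivalences reduce to the standard dictionary between scheme-theoretic points of $A$ and closed irreducible subvarieties. First I would rewrite condition (2). By the identity (\ref{dzx}) the left-hand side of (\ref{r4x}) equals $d_a^{\rm ab}(J) = d_a(J) - \delta_a$, so (\ref{r4x}) reads $d_a(J) - \delta_a \geq d_a(J) - d_A + d_a$; cancelling $d_a(J)$ leaves $d_A - d_a \geq \delta_a$. Since $d_a = \dim\overline{\{a\}}$, we have $d_A - d_a = {\rm codim}_A(\overline{\{a\}})$, so condition (2) is exactly the assertion that ${\rm codim}_A(\overline{\{a\}}) \geq \delta_a$ holds for every point $a \in A$.

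Next I would identify this reformulation with condition (1). The map $a \mapsto Z := \overline{\{a\}}$ is a bijection from the scheme-theoretic points of $A$ onto its closed irreducible subvarieties, with inverse $Z \mapsto \eta_Z$. Under it ${\rm codim}_A(\overline{\{a\}}) = {\rm codim}_A(Z)$, and moreover $\delta_a = \delta_Z$: indeed $a = \eta_Z$ is the generic point of $Z$, and by the upper semi-continuity of $\delta$ recorded after (\ref{rf0}) the value of $\delta$ at the generic point is the minimum of $\delta$ over $Z$, which is by definition $\delta_Z$. Hence the reformulation of (2) and condition (1) are the same statement read through this bijection, proving (1) $\Leftrightarrow$ (2).

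It remains to prove $\delta$-regularity $\Leftrightarrow$ (1). From $\delta$-regularity to (1): given a closed irreducible $Z$ with generic point $\eta_Z$, the minimum $\delta_Z$ is attained at $\eta_Z$, so $\eta_Z$ lies on some irreducible component $W$ of $S_{\delta_Z}$; then $Z = \overline{\{\eta_Z\}} \subseteq \overline{W}$ yields ${\rm codim}_A(Z) \geq {\rm codim}_A(\overline{W}) = {\rm codim}_A(W) \geq \delta_Z$, the last inequality being $\delta$-regularity. Conversely, assuming (1), fix $\delta \geq 0$ and an irreducible component $W$ of $S_\delta$; then $Z := \overline{W}$ is closed irreducible with generic point in $S_\delta$, so $\delta_Z = \delta$, and (1) gives ${\rm codim}_A(W) = {\rm codim}_A(\overline{W}) = {\rm codim}_A(Z) \geq \delta_Z = \delta$. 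The only facts used beyond bookkeeping are that the minimum of $\delta$ on an irreducible variety is attained at its generic point (which is precisely the upper semi-continuity of $\delta$) and that passing to the closure of a locally closed set preserves its codimension; both are elementary, so I expect no substantive obstacle, consistent with the statement that the lemma is an immediate consequence of (\ref{rf0}) and (\ref{dzx}).
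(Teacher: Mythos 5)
Your proof is correct and uses exactly the two ingredients the paper invokes --- the upper semi-continuity of $\delta$ (to see that the minimum on an irreducible $Z$ is attained at its generic point) and the identity $d_a(J) = d_a^{\rm aff}(J) + d_a^{\rm ab}(J)$ (to translate (\ref{r4x}) into ${\rm codim}_A(\ov{\{a\}}) \geq \delta_a$) --- so it is essentially the paper's argument, merely written out in full where the paper declares the lemma an immediate consequence. No gaps: the dictionary between scheme points and closed irreducible subvarieties, and the fact that ${\rm codim}_A(W) = {\rm codim}_A(\ov{W})$ for a locally closed $W$, are exactly the bookkeeping needed.
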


{\bf The Tate module $T_{\oql}(J)$ and the notion of its polarizability.}
Let $g:J\to A$ be as above and let $d_g: = \dim{(J)} - \dim{(A})$ be the pure relative dimension
of $g$.
The Tate module of $J$ is the  $\oql$-adic sheaf (\ci[\S4.12]{ngofl})
\beq\la{tm}
T_{\oql}(J):= R^{2d_{g} -1} {g}_! \oql (d_g).
\eeq 
Its stalk at   any geometric point $\ov{a}$ of $A$ is given
by the Tate module $T_{\oql}(J_{\ov{a}})$, i.e. the inverse limit, with respect to $i\in \mathbb N$,  of the $\ell^i$-torsion points on $J_{\ov{a}}$,
tensored with $\oql$ over $\zed_\ell.$
The Chevalley devissage at the stalks yields  the natural short exact sequence  
\beq\la{zrt}
0 \to T_{\oql} (J^{\rm aff}_{\ov{a}})
 \to T_{\oql} (J_{\ov{a}})  \to T_{\oql}{(J^{\rm ab}_{\ov{a}})} \to 0.
 \eeq

The Tate module $T_{\oql}(J)$ is said to be polarizable if it admits a polarization,
i.e. an alternating bilinear pairing
\beq\la{pzx}
\psi: T_{\oql}(J) \otimes_{\oql} T_{\oql}(J) \lorw \oql (1),
\eeq
such that, for every geometric point  $\ov{a}$ of  $A$, we have that  the kernel of $\psi_{\ov{a}}$
is exactly $T_{\oql}(J^{\rm aff}_{\ov{a}})$. In this  case,  the pairings  $\psi_{\ov{a}}$ descend
to non-degenerate, alternating, bilinear parings on the $T_{\oql}(J^{\rm ab}_{\ov{a}}).$

Note that by general principles (cf. \ci[VIII, Cor. 4.10]{sga7.1}, the alternating bilinear pairings we consider in this paper
are automatically trivial on the ``affine" part, and do descend to the ``abelian" part. We do verify this fact along  the way to proving the key fact
that, in the cases we deal with, they  in fact descend to non-degenerate pairings.

{\bf Affine stabilizers.}
Let $h: M\to A$ be a morphism of varieties and let $J\to A$ be a group scheme   acting on  $M/A$.
We say that  that the action has affine stabilizers if for every geometric  point $m$ of $M$, we have that
the stabilizer subgroup ${\rm St_m} \subseteq J_{h(m)}$  is affine.

{\bf $\delta$-regular weak abelian fibrations.}
See \ci{ngofl,ngoaf}. 
Let $h: M\to A \leftarrow J:g$ be a pair of morphisms of varieties, where  
$g$ is  as in in the beginning of this section  \S\ref{w2} (smooth commutative group scheme, with geometrically connected fibers over  an irreducible $A$), $h$ is proper, and $J/A$ acts on $M/A$. We denote this situation
simply by $(M,A,J)$; the context will make it clear which morphisms $h,g$ are being used.

\begin{defi}\la{waf}
{\rm ({\bf Weak abelian fibration})}
We say that $(M,A,J)$ is a weak abelian fibration if $g$ and $h$ have the same pure relative dimension,
the Tate module $T_{\oql}(J)$ is polarizable and the action has affine stabilizers.
{\rm ({\bf $\delta$-regular weak abelian fibration})} A weak abelian fibration $(M,A,J)$ is said to be $\delta$-regular if  $g:J\to A$ is $\delta$-regular
as in Definition {\rm \ref{delrg}}, equation {\rm (\ref{bt})}, or equivalently as in Lemma 
{\rm \ref{gdr}}, equation {\rm (\ref{r4x})}.
\end{defi}

{\bf Ng\^o support inequality.}
The following is a remarkable, and remarkably useful, topological restriction on the dimensions of the supports appearing in the context of weak abelian fibrations. If $a\in A$, then 
$d_a:= \dim{\ov{\{ a\}}}$ is the dimension of the closed subvariety of $A$ with generic point $a$.
For the notion of socle, see \S\ref{intro}.  The celebrated Ng\^o support theorem \ci[Thm. 7.2.1]{ngofl} is a more refined
 restriction
on the geometry of the supports, and it  is proved also by using the support inequality.

  \begin{tm}\la{nst} {\rm  ({\bf N\^go's support  inequality} \ci[Thm. 7.2.2]{ngofl})}
Let $(M,A,J)$ be a weak abelian fibration with $M$ and $A$  nonsingular  and with $h$ projective of pure relative dimension
$d_h$.
If  $a \in {\socle}  (Rh_* \oql)$, then: 
\beq\la{nin}
d_h - d_A + d_a \geq d_a^{\rm ab}(J).
\eeq
\end{tm}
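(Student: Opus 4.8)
The plan is to follow Ng\^o's strategy \ci[\S7]{ngofl}, extracting the inequality from the interplay between the decomposition theorem for $h$ and the cup-product action of the abelian part of the Tate module. Normalize $K := Rh_* \oql[\dim M]$; since $M$ is nonsingular and $h$ is proper, $K$ is self-dual for Verdier duality (up to twist), and by the decomposition theorem \ci{bbd} we may write $K \cong \bigoplus_b \pc{b}{K}[-b]$ with $\pc{b}{K} = \bigoplus_Z \m{IC}_Z(\m L_{Z,b})$, the perverse degrees ranging in $[-d_h,d_h]$ and symmetrically about $0$ by relative hard Lefschetz. Fix $a$ in the socle, so that some simple summand $\m{IC}_Z(\m L)$ with $Z=\ov{\{a\}}$, $\dim Z = d_a$, occurs. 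The first step is to equip $K$ with its module structure over the exterior algebra $\bigwedge^\bullet T_{\oql}(J)$ induced by the action map $J\times_A M \to M$: the degree-one part $T_{\oql}(J)$ raises perverse degree by one, so $\bigwedge^m T_{\oql}(J)$ induces maps $\pc{b}{K} \to \pc{b+m}{K}$.

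Second, I would push this action to the abelian quotient. The polarizability hypothesis (Definition \ref{waf}, pairing (\ref{pzx})) says the alternating form $\psi$ has kernel exactly $T_{\oql}(J^{\rm aff}_{\ov a})$ at every geometric point, so it descends to a \emph{nondegenerate} symplectic form on $T_{\oql}(J^{\rm ab}_{\ov a})$, a space of dimension $2\,d_a^{\rm ab}(J)$ by the Chevalley d\'evissage (\ref{cbr})--(\ref{zrt}). Nondegeneracy makes $\psi$ a Lefschetz operator whose top exterior power is a volume form. The affine-stabilizers hypothesis enters precisely here: it forces the affine part of the action to be inert on top-degree cohomology, so that the whole nontrivial cup action at the top is carried by $\bigwedge^\bullet T_{\oql}(J^{\rm ab})$.

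Third comes the homological freeness statement, the technical core: cup-product with $\bigwedge^{2 d_a^{\rm ab}} T_{\oql}(J^{\rm ab}_{\ov a})$ acts injectively on the $\m{IC}_Z$-isotypic summands at the generic point of $Z$. Granting this, if $\m{IC}_Z(\m L)$ occurs in $\pc{b}{K}$ then it also occurs in $\pc{b+2 d_a^{\rm ab}}{K}$; as the self-duality of $K$ makes the set of occurrence degrees symmetric about $b=0$, the summand must occur at some perverse degree $b \geq d_a^{\rm ab}(J)$. On the other hand $\m{IC}_Z(\m L)$ has lowest stalk cohomology in degree $-d_a$ at the generic point of $Z$, so its occurrence at perverse degree $b$ yields a nonzero class in $\m{H}^{\,b-d_a}(K)_{\ov a} = H^{\,b-d_a+\dim M}(M_{\ov a})$, where $\dim M = d_h + d_A$. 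Since $\dim M_{\ov a} = d_h$, vanishing of $H^{>2d_h}(M_{\ov a})$ forces $b \leq d_h - d_A + d_a$, and combining $d_a^{\rm ab}(J) \leq b \leq d_h - d_A + d_a$ gives
\[
d_h - d_A + d_a \ \geq\ d_a^{\rm ab}(J).
\]

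The main obstacle is the freeness assertion of the third step, and it is exactly where both defining hypotheses of a weak abelian fibration are indispensable: polarizability supplies the nondegeneracy that promotes the top cup operation to an injection, while affine stabilizers single out the abelian variety as the only source of top-degree classes and allow one to localize the module structure to the generic point of the support. I expect the delicate point to be the passage from the stalkwise Lefschetz/$\mathfrak{sl}_2$ picture to an injectivity statement about perverse summands of $K$, which I would carry out as in \ci[\S7.4]{ngofl}.
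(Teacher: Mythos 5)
The paper offers no proof of Theorem \ref{nst}: it is imported verbatim from Ng\^o \ci[Thm. 7.2.2]{ngofl}, and your sketch faithfully reconstructs Ng\^o's own argument --- decomposition theorem and relative hard Lefschetz symmetry for $K = Rh_*\oql[\dim M]$, the module structure over $\bigwedge^\bullet T_{\oql}(J)$, descent to the abelian quotient via the polarization (with affine stabilizers feeding into the freeness statement), and the amplitude bound $b \leq d_h - d_A + d_a$ coming from vanishing of fiber cohomology above degree $2d_h$ --- with correct numerology throughout. Your deferral of the homological freeness statement to \ci[\S 7.4]{ngofl} is exactly where the technical weight of the cited proof sits, so your route coincides with the proof the paper points to.
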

Given that we are assuming $d_h=d_g$, we may re-formulate (\ref{nin}) as follows via (\ref{dzx})
\beq\la{xcd}
d^{\rm aff}_a(J) \geq  {\rm codim} (\ov{\{a\}} ).
\eeq

\section{The $GL_n$ weak abelian fibration}\la{drwaf}$\;$

This \S\ref{drwaf} is devoted to a detailed  study of the $\delta$-regular weak abelian fibration  $(M_n,A_n, J_n),$
arising from the action of  the Jacobi group scheme $J_n/A_n$, associated with the family of spectral curves of the $GL_n$ Hitchin fibration $M_n/A_n.$
\S\ref{jn} introduces the  Jacobi group scheme $J_n/A_n$ and its action on $M_n/A_n$: its fibers are the Jacobians
of the spectral curves.
\S\ref{affstb} shows that the stabilizers for this action are affine.   I am  not aware of an explicit reference in the literature
for this  result  over the whole base $A_n$; \ci[4.15.2]{ngofl} deals with  a suitable open proper subset of $A_n$,
and for every $G$ reductive. \S\ref{poltm} is devoted to the lengthy proof that the Tate module
associated with $J_n/A_n$ is polarizable over the whole base $A_n$. Again,  I am  not aware of an explicit reference in the literature
for this result  over the whole base $A_n;$ the standard reference for this important-for-us
technical fact is \ci[\S4.12]{ngofl}, which deals with the situation over the  elliptic locus
$A_n^{\rm ell} \subseteq A_n.$ Following  this preparation, \S\ref{00}
contains the main result of this section, namely Theorem \ref{f4}, to the effect that 
$(M_n,A_n, J_n)$ is a weak abelian fibration that is $\delta$-regular over the elliptic locus $A_n^{\rm ell};$
this affords the    support inequality over the whole $A_n$, and the  $\delta$-regularity inequality
over the elliptic locus $A_n^{\rm ell}$. We need some of these explicit  details  of this $GL_n$ section  \S\ref{drwaf},
especially in connection with non-reduced spectral curves,
in view of our main Theorem \ref{maintm} on the $SL_n$ socle.

\subsection{The action of the Jacobi group scheme $J_n$}\la{jn}$\;$

For what follows, see \ci[\S5]{ch-la}.
Let $J_n \to A_n$ be the identity connected component of the degree zero component of the relative Picard stack
${\rm Pic}_{\m{C}/A_n}$.
This is a connected, smooth, commutative group scheme over $A_n$, whose fiber $J_{n,a}$ over a point $a \in A_n$
is  ${\rm Pic}^0 (\m{C}_a)$; see Fact \ref{piczero} for a description of this group.  In particular, the structural morphism $g_n:J_n \to A_n$ is of pure relative dimension, call it $d_{g_n}$, the arithmetic genus of the spectral curves,
which coincides with the pure relative dimension   $d_{h_n}$   (\ref{rr}) of $h_n: M_n \to A_n$, i.e. we have
\beq\la{oim}
d_{g_n} = d_{h_n}.
\eeq

The group scheme $J_n/A_n$ acts on the Hitchin fibration $M_n/A_n;$ see Proposition \ref{kh}.

\subsection{Affine stabilizers  for the action of  the Jacobi group scheme}\la{affstb}$\;$

\begin{pr}\la{affstbz}
The action of $J_n/A$ on $M_n/A_n$ has  affine stabilizers.
\end{pr}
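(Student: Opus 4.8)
The plan is to analyze the stabilizer of the $J_n$-action directly through the modular description of the Hitchin fiber. Recall that a geometric point $m$ of $M_n$ over $a \in A_n$ corresponds to a stable torsion free sheaf $\m{E}$ on the spectral curve $\m{C}_a$ with ${\rm Rk}_{\m{C}_a}(\m{E})=1$, and that the action of $J_{n,a}={\rm Pic}^0(\m{C}_a)$ is by tensor product $\m{L}\cdot \m{E} := \m{E}\otimes \m{L}$. Hence the stabilizer ${\rm St}_m \subseteq J_{n,a}$ is precisely
\beq
{\rm St}_m = \left\{ \m{L} \in {\rm Pic}^0(\m{C}_a) \; \big| \; \m{E}\otimes \m{L} \cong \m{E} \right\}.
\eeq
The goal is to show this group scheme is affine. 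The key structural tool is the Chevalley devissage (\ref{cbr}): it suffices to show that ${\rm St}_m$ contains no nontrivial abelian variety, equivalently that ${\rm St}_m^{\rm ab}=0$, i.e. that the maximal abelian subvariety of ${\rm St}_m$ is trivial.

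First I would reduce to studying the image of ${\rm St}_m$ in the abelian quotient $J_{n,a}^{\rm ab}$. Via the norm/pull-back maps of diagram (\ref{gh}) and the description of ${\rm Pic}^0(\m{C}_a)$ in Fact \ref{piczero}, the abelian part of $J_{n,a}$ is governed by the Jacobians of the normalizations $\w{\Gamma_k}$ of the reduced irreducible components, through the pull-back $\xi^*$. So the strategy is: if $\m{L} \in {\rm St}_m$ lies in (an abelian subvariety mapping isomorphically into) $J_{n,a}^{\rm ab}$, then its pull-back $\xi^*\m{L}$ to $\coprod_k \w{\Gamma_k}$ is an isomorphism class fixing the pulled-back sheaf, and I would argue that on each smooth integral curve $\w{\Gamma_k}$ a line bundle of degree zero fixing a torsion free (hence, on a smooth curve, locally free) sheaf $\m{E}$ under tensor product must be trivial. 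Concretely, if $\m{E}\otimes\m{L}\cong\m{E}$ with $\m{E}$ locally free of rank $r$ on a smooth integral curve, comparing determinants gives $\m{L}^{\otimes r}\cong\m{O}$, so $\m{L}$ is torsion, hence lies in a finite group, hence in no positive-dimensional abelian variety.

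The heart of the argument is therefore the passage from the possibly non-reduced, reducible curve $\m{C}_a$ to its normalization: the abelian part $J_{n,a}^{\rm ab}$ is built from $\prod_k {\rm Pic}^0(\w{\Gamma_k})$, while the affine (unipotent plus torus) part is contributed by the thickenings $m_k$ and the singularities/gluing. I would make this precise by noting that the composite $T_{\oql}(J^{\rm aff}) \to T_{\oql}(J) \to T_{\oql}(J^{\rm ab})$ from (\ref{zrt}) corresponds geometrically to $\xi^*$ onto the product of Jacobians of the $\w{\Gamma_k}$, and that an abelian subvariety of ${\rm St}_m$ would inject into this product. Since the pulled-back sheaf $\xi^*\m{E}$ (more precisely its restriction to each $\w{\Gamma_k}$) is a nonzero torsion free, hence locally free, sheaf of positive rank on the smooth curve $\w{\Gamma_k}$, the determinant argument above forces any such abelian subvariety to be trivial.

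The main obstacle I anticipate is handling the non-reduced structure and the torsion-freeness carefully: on $\m{C}_a$ the sheaf $\m{E}$ need not be locally free, and ``fixing $\m{E}$ up to isomorphism'' is a statement about the full thickened curve, not just its normalization. I would need to verify that an abelian subvariety of the stabilizer necessarily has nontrivial image in $J_{n,a}^{\rm ab}$ (this uses that the kernel $J_{n,a}^{\rm aff}$ is affine, so an abelian variety inside ${\rm St}_m$ maps with finite kernel to the quotient), and then that tensoring by the corresponding pulled-back degree-zero line bundles on the smooth components cannot fix the locally free restrictions unless the bundles are torsion. This reduction — from the stabilizer on the singular curve to a determinant/torsion argument on the smooth normalizations — is the crux, and making the compatibility with the Chevalley devissage and Fact \ref{piczero} rigorous over the whole base $A_n$, including the most degenerate points, is where the real work lies.
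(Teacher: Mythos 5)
Your core mechanism is the same as the paper's: pull back along the normalization $\xi: \coprod_k \w{\Gamma_k} \to \m{C}_a$, take determinants on the smooth components to force a stabilizing line bundle to become torsion, and exploit the affineness of $\ke\, \xi^*$. But one step as written is false: the restriction of $\xi^*\m{E}$ to $\w{\Gamma_k}$ is \emph{not} in general torsion free (pulling back a torsion-free sheaf along a normalization typically creates torsion --- e.g.\ the maximal ideal of a node), so you cannot directly apply the determinant argument to it. The paper's fix, which you flag as ``where the real work lies'' but do not actually supply, is the canonical devissage $0 \to {\rm Tors}(\xi^*\m{E}) \to \xi^*\m{E} \to \ms{E} \to 0$: since the torsion subsheaf is functorial and tensoring by the line bundle $\xi^*\m{L}$ is exact, a stabilizing $\m{L}$ stabilizes the locally free quotient $\ms{E}$, which has rank $m_k$ on $\w{\Gamma_k}$; your determinant computation then applies to $\ms{E}$ and gives $\xi^*\m{L} \in \prod_k {\rm Pic}^0(\w{\Gamma_k})[m_k]$, a finite group.

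The second, more structural, gap is your opening reduction: ``affine if and only if ${\rm St}_m$ contains no nontrivial abelian subvariety'' is not a valid criterion. The Chevalley quotient is a quotient, not a subgroup, and there exist non-affine (anti-affine) commutative groups with trivial maximal abelian subvariety --- e.g.\ a nonsplit extension of an elliptic curve by ${\mathbb G}_m$ classified by a non-torsion point of the dual curve, or a universal vector extension; groups of exactly this shape occur inside generalized Jacobians, so your criterion does not a priori exclude them from ${\rm St}_m$. The correct criterion, and the one the paper implicitly uses, is that ${\rm St}_m$ is affine if and only if its image under $\xi^*$ (equivalently, in the Chevalley quotient, cf.\ (\ref{q0}), where $\ke\,\xi^* = J^{\rm aff}_{n,a}$ is affine and connected) is finite: then ${\rm St}_m$ is an extension of a finite group by a closed subgroup of an affine group, hence affine. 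Fortunately, your determinant argument, once repaired by the torsion devissage, proves this stronger finiteness statement for \emph{every} element of the stabilizer --- not merely for elements lying on abelian subvarieties --- so the faulty equivalence can simply be discarded and the rest of your plan then coincides with the paper's proof.
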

\begin{proof}
Let $a$ be a geometric point of $A_n$ and let $\m{E} \in M_{n,a}$.
Recall that $\rm Rk_{\m{C}_a} (\m{E})=1$ means that,  with the notation of \S\ref{spcv}, if $\m{C}_a = \sum_k m_k \Gamma_k$,
with $m_k \geq 1$ for every $k$, 
then the length of $\m{E}$ at the stalk of the generic point of $\Gamma_k$ is $m_k,$ for every $k.$ 
Let $\xi: \w{\m{C}_{a,{\rm red}}} = \coprod_k \w{\Gamma_k}  \to \m{C}_a$ be the morphism from the  normalization of $\m{C}_{a, {\rm red}}$ (cf. (\ref{gh})).
Let $0 \to {\rm Tors} (\xi^* \m{E}) \to \xi^* \m{E} \to \xi^* \m{E}/{\rm Tors} (\xi^* \m{E})=:\ms{E} \to 0$ be the canonical
devissage of the torsion  of $\xi^* \m{E}$ on the nonsingular   projective curve   $\coprod_k \w{\Gamma_k}$.
Let $\m{L} \in {\rm Pic}^0(\m{C}_a)$. Assume that $\m{L}$ stabilizes $\m{E}$. Then $\xi^* \m{L}$
stabilizes every term in the canonical torsion devissage of $\xi^*( \m{E}) \otimes \xi^*\m{L}$.
In particular, $\xi^*\m{L}$ stabilizes the vector bundle $\ms{E}$, which has rank $m_k$ on each $\w{\Gamma_k}$.
 By considerations of determinants, we see that $\xi^* \m{L}
\in \prod_k {\rm Pic}^0(\w{\Gamma_k}) [m_k]$, a finite group.
The natural morphism $\xi^*: {\rm Pic}^0(\m{C}_{a,{\rm red}}) \to {\rm Pic}^0(\w{\m{C}_a})=
\prod_k {\rm Pic}^0(\w{\Gamma_k})$ is surjective, with affine (connected) kernel (cf. \ci[\S9]{neron}).
It follows that the stabilizer of $\m{E}$ is an extension of a finite group by an affine subgroup, so that it is affine.
\end{proof}

\subsection{The Tate module of  the  Jacobi group scheme  is polarizable  }\la{poltm}$\;$

We refer to \S\ref{w2} for the terminology.
Let $g_n: J_n \to A_n$ be the structural morphism for Picard. The Tate module  is the  $\oql$-adic sheaf (\ref{zrt})
$T_{\oql}(J_n):= R^{2d_{h_n} -1} {g_n}_! \oql (d_{h_n})$. If $a$ is a geometric point of $A_n,$ then the Chevalley devissage
 yields  the natural short exact sequences
 \beq\la{zr}
0 \to T_{\oql}(J^{\rm aff}_{n,a})
 \to T_{\oql} (J_{n,a})  \to T_{\oql}{(J^{\rm ab}_{n,a})} \to 0.
 \eeq
 Note that:  (i) $\dim_{\oql} T_{\oql}(J^{\rm ab}_{n,a}) = 2 \dim J^{\rm ab}_{n,a}$; (ii) 
  $\dim_{\oql} T_{\oql}(J^{\rm aff}_{n,a}) \leq \dim J^{\rm aff}_{n,a}$, and that the strict inequality
 can occur:  this is due to the fact that
 the affine part  $J^{\rm aff}_{n,a}$ is an iterated extension  of   the additive and of the multiplicative group $\mathbb G_a$ and $\mathbb G_m$
 \ci[\S9]{neron}, and only the latter contribute
 to the Tate module.

The goal of this section is to prove the following polarizability result, which has been proved 
over the elliptic locus $A_n^{\rm ell}$ in \ci{ngofl}, and is stated implicitly over the whole base $A_n$ and then used in \ci[\S9]{ch-la}.

\begin{tm}\la{poltmd}
The Tate module $T_{\oql}(J_{n})$  on $A_n$ is polarizable.
\end{tm}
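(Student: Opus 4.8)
The plan is to exhibit the required polarization as a single, globally defined relative cup-product pairing on the universal spectral curve, and then to verify the kernel condition fibrewise by a weight/purity argument on the cohomology of the (possibly reducible and non-reduced) spectral curves. First I would reinterpret the Tate module cohomologically. Writing $p:\m{C}\to A_n$ for the universal spectral curve of \S\ref{spcv}, the Kummer sequence on the smooth total space $\m{C}$, together with the definition $T_{\oql}(J_n)=R^{2d_{h_n}-1}{g_n}_!\oql(d_{h_n})$ and proper base change, yields a canonical isomorphism of $\oql$-adic sheaves on $A_n$,
\[
T_{\oql}(J_n)\;\cong\; R^1p_*\oql(1).
\]
Fibrewise this is the standard identification $T_{\oql}({\rm Pic}^0(\m{C}_a))\cong \cjxf{1}{\m{C}_a}{\oql}(1)$ used over the elliptic locus in \ci[\S4.12]{ngofl}; note that étale cohomology only sees the reduced structure, in accordance with the fact that the unipotent $\mathbb{G}_a$-factors of $J^{\rm aff}_{n,a}$ contribute neither to $H^1$ nor to the Tate module, whereas the $\mathbb{G}_m$-factors do. Since $\m{C}$ is smooth and $p$ is proper of pure relative dimension one, there is a Gysin trace $\mathrm{tr}_p:R^2p_*\oql\to\oql(-1)$ which on each geometric fibre is the degree map sending the fundamental class of each reduced irreducible component to $1$. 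I would then define $\psi$ as the relative cup product followed by this trace,
\[
\psi:\; R^1p_*\oql(1)\otimes_{\oql}R^1p_*\oql(1)\;\longrightarrow\;R^2p_*\oql(2)\;\xrightarrow{\ \mathrm{tr}_p\ }\;\oql(1).
\]
Graded-commutativity of the cup product in odd degree, together with $\mathrm{char}=0$, makes $\psi$ alternating, and the target is correctly $\oql(1)$.

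It then remains to check, at each geometric point $\ov{a}$ with $\m{C}_a=\sum_k m_k\Gamma_k$, that the radical of $\psi_{\ov{a}}$ is exactly $T_{\oql}(J^{\rm aff}_{n,a})$. Here I would use the normalization $\nu:\coprod_k\w{\Gamma_k}\to\m{C}_{a,{\rm red}}$ of diagram (\ref{gh}). Since $\nu$ has degree one on each component, the projection formula gives $\psi_{\ov{a}}(\alpha,\beta)=\sum_k\int_{\w{\Gamma_k}}\nu^*\alpha\cup\nu^*\beta$, so $\psi_{\ov{a}}$ is the pull-back along $\nu^*$ of the orthogonal direct sum of the cup-product pairings on the $\cjxf{1}{\w{\Gamma_k}}{\oql}$. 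Each of the latter is perfect by Poincaré duality on the smooth projective curve $\w{\Gamma_k}$, and cross-terms vanish because the $\w{\Gamma_k}$ are disjoint; hence the descended pairing on $\bigoplus_k\cjxf{1}{\w{\Gamma_k}}{\oql}(1)$ is non-degenerate, and the radical of $\psi_{\ov{a}}$ is exactly $\ke(\nu^*)$. The normalization sequence $0\to\oql\to\nu_*\oql\to\mathcal{Q}\to0$ on $\m{C}_{a,{\rm red}}$, with $\mathcal{Q}$ a skyscraper on the singular locus, shows that $\nu^*$ is surjective with kernel the weight-zero part $W_0\cjxf{1}{\m{C}_{a,{\rm red}}}{\oql}$, consistent with the purity of $H^2$ forcing this weight-zero part into the radical.

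Finally I would match $\ke(\nu^*)$ with the affine Tate module. Exactly as in the proof of Proposition \ref{affstbz}, the composite ${\rm Pic}^0(\m{C}_a)\to{\rm Pic}^0(\m{C}_{a,{\rm red}})\to\prod_k{\rm Pic}^0(\w{\Gamma_k})$ is surjective with affine connected kernel (\ci[\S9]{neron}), so it realizes the Chevalley dévissage (\ref{cbr}): one gets $J^{\rm ab}_{n,a}=\prod_k{\rm Pic}^0(\w{\Gamma_k})$, the map on Tate modules being precisely $\nu^*(1)$, and therefore $T_{\oql}(J^{\rm aff}_{n,a})=\ke(\nu^*(1))=\mathrm{rad}(\psi_{\ov{a}})$. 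This is the polarizability condition of \S\ref{w2}. The main obstacle is precisely this fibrewise step: one must show that the one globally defined cup-product pairing has radical matching the Chevalley decomposition simultaneously at \emph{every} geometric point, and this rests on the weight/purity identification of $W_0H^1$ with $\ke(\nu^*)$ together with the structure theory of ${\rm Pic}^0$ for singular and non-reduced curves (\ci[\S9]{neron}); the automatic triviality of $\psi$ on the affine part is moreover guaranteed abstractly by \ci[VIII, Cor. 4.10]{sga7.1}, so the real content is the non-degeneracy of the descended pairing on the abelian quotient.
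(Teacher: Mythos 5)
Your proposal is correct in substance, but it takes a genuinely different route from the paper. The paper never passes through the identification $T_{\oql}(J_n)\cong R^1p_*\oql(1)$: it constructs the pairing directly on line bundles via the determinant of cohomology, following Ng\^o's Weil-pairing construction \ci[\S4.12]{ngofl}, i.e. $\langle L,M\rangle:=\Delta(L\otimes M)\otimes\Delta(\m{O})\otimes\Delta(L)^\vee\otimes\Delta(M)^\vee$, and the real content of its proof is the computation (\ref{wuf}), carried out with the short exact sequences (\ref{rfty}), (\ref{rft}), (\ref{rf}) and the $P$-triviality of punctual sheaves, showing that on a fibre $\m{C}_a=\sum_k m_k\Gamma_k$ the descended pairing is the weighted sum $TW^{\rm ab}=\sum_k m_k\,TW_{\w{\Gamma_k}}$ of the nondegenerate pairings on the normalized components; the kernel identification then comes from \ci[\S9.3, Cor. 11]{neron}, exactly as in your last paragraph, so that part of your argument coincides with the paper's. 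Your replacement of the determinant-of-cohomology computation by the relative cup product composed with the trace of the proper flat family, with the radical computed fibrewise through the normalization sequence and the weight filtration, is legitimate and arguably more conceptual: it trades the inductive sheaf-theoretic bookkeeping on the non-reduced curve for standard purity and Poincar\'e duality on the $\w{\Gamma_k}$.

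One point needs correction, although it does not invalidate your proof: the trace of the flat family does \emph{not} send the point class of each reduced irreducible component to $1$ on degenerate fibres. By compatibility of the trace with base change and with relative cycle classes (any multisection of the family meets the fibre $\m{C}_a$ along $\Gamma_k$ with intersection multiplicity divisible by $m_k$, since the fibre class is $\sum_k m_k[\Gamma_k]$ on the smooth total space), the fibrewise trace sends the point class on $\Gamma_k$ to $m_k$, so your pairing is in fact $\psi_{\ov{a}}(\alpha,\beta)=\sum_k m_k\int_{\w{\Gamma_k}}\nu^*\alpha\cup\nu^*\beta$ rather than the unweighted sum --- which is precisely the paper's formula (\ref{q10}). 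Since the $m_k$ are positive, non-degeneracy on the abelian quotient is insensitive to these weights and your argument closes as stated. The weights do matter downstream, however: the explicit $m_k$-weighted form (\ref{q10}) is exactly what the paper reuses in Lemma \ref{lzt}.(1), where the adjunction between $\w{p}^*$ and $N_p^{\rm ab}$ with respect to $TW^{\rm ab}$ only balances because of the factors $m_k$ in both (\ref{q10}) and (\ref{eq01z}); this is the concrete payoff of the paper's more computational approach, and with your normalization of the trace the subsequent $SL_n$ polarizability argument of Theorem \ref{slnpol} would not come out correctly.
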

\begin{proof}  
Let $p:\m{C} \to A_n$ be the family of spectral curves: it is proper, flat, with geometrically connected fibers,
with nonsingular total space, and with nonsingular general fiber. As in \ci[\S4.12]{ngofl},
the pairing is defined by constructing it over the strict henselianization of the local ring of  any Zariski point $a \in A_n$,
for the construction yields a canonical outcome. We denote these new shrunken  families  by $p:\m{C} \to A$, $g: J\to A$.
For a coherent sheaf $F$ on $\m{C}$, set
$\Delta (F):= \det (Rp_* F),$ where we are taking the determinant of cohomology \ci[especially, \S1.4]{deligne,soule} and the result is a graded line bundle on $A$. If $F$ is $\m{O}_A$-flat, then  the  degree of this graded line bundle  is the Euler characteristic of $F$ along the fibers $\m{C}_a.$
The Weil pairing construction associates with  $L, M \in {\rm Pic}^0(\m{C}/A)$
the graded line bundle on $A$ given by the formula
\beq\la{dfre}
\langle L, M\rangle_{\m{C}/A}:= P(L,M): =\Delta (L\otimes M) \otimes  \Delta (\m{O}_A) \otimes 
 \Delta (L)^\vee \otimes  \Delta ( M)^\vee.
\eeq
Note that both of the terms defined by (\ref{dfre}) make sense for  any pair of coherent sheaves on $\m{C}$, 
however, we shall use $\langle -,- \rangle$ when dealing with line bundles, whereas
we shall use $P(-,-)$ also for other coherent sheaves, hence the two distinct pieces of  notation.

\n
Let $L, M \in {\rm Pic}^0(\m{C}/A) [\ell^i]$ be $\ell^i$-torsion line bundles.
The formalism of the determinant of cohomology
yields two distinguished  isomorphisms $i_L, i_M:\langle L, M\rangle^{\otimes \ell^i}_{\m{C}/A} \lorw \m{O}_S$
whose difference $\e_{L,M}$ is an $\ell^i$-th root of unity in the ground field and which depends only on the isomorphism classes of $L$ and of $M$.
By taking inverse limits with respect to $i$, and then by  tensoring  with $\oql$, we obtain  a pairing, let us call it the  Tate-Weil pairing
\beq\la{tw}
TW: T_{\oql}(J) \otimes_{\oql} T_{\oql}(J) \lorw T_{\oql} ({\mathbb G}_m) = \oql (1),
\quad \{L_i, M_i\}_{i \in \mathbb N} \mapsto \{\e_{L_i,M_i}\}_{i \in \mathbb N} \in \zed_\ell (1).
\eeq
The Weil and the Tate-Weil pairing are compatible with base change.

\n
Let $a $ be a geometric point of $A$, consider the diagram (\ref{gh}) of maps of curves, and extract the 
following morphisms
\beq\la{4rf}
\xymatrix{
\xi = \coprod_k \xi_k:
\coprod_k \w{\Gamma_k} \ar[rr]^{\hskip 9mm \xi_{3}= \coprod_k \xi_{3,k}} &&  \coprod_k \Gamma_k  
\ar[rr]^{\hskip -3mm \xi_{4}= \coprod_k \xi_{4,k}} && \sum_k m_k \Gamma_k,
}
\eeq
\beq\la{pax}
\xi:\w{\m{C}_{a,{\rm red}}}  \stackrel{\nu}\lorw  {\m{C}}_{a,{\rm red}} \stackrel{\rho}\lorw \m{C}_a,
\qquad
\xi = \coprod_k \xi_k:  \coprod_k \w{\Gamma_k} \stackrel{\nu= \coprod_k \nu_k}\lorw  \sum_k \Gamma_k
 \stackrel{\rho}\lorw \sum_k m_k \Gamma_k.
\eeq
{\bf Claim.}
Let $L,M \in J_a={\rm Pic}(\sum_k m_k \Gamma_k).$ 
Then
\beq\la{wuf}
\langle L, M \rangle_{\sum_k m_k \Gamma_k} =
\bigotimes_k \langle \xi^*_{4,k} L, \xi^*_{4,k} M \rangle_{\Gamma_k}^{\otimes m_k} = 
\bigotimes_k \langle \xi_k^*L, \xi_k^* M \rangle_{\w{\Gamma_k}}^{\otimes m_k}.
\eeq
In order to prove this claim, we first  list the three short exact sequences below.

\n
The ideal sheaf of $\m{C}_a$ in $\m{O}_{V(D)\otimes k(a)}$ is locally generated by the product  $\prod_{k=1}^s \frak{s}_k^{m_k}$ (cf. \S\ref{spcv}) of powers  of sections of the line bundle $\pi^*D$ 
 on the surface $V(D)\otimes k(a)$. Fix   any index $1 \leq k_o \leq s$; fix any sequence $\{\mu_k\}_{k=1}^s,$ with 
 $0 \leq \mu_k \leq m_k$ for every $k$, with 
 $1\leq \mu_{k_o}$, and with $\sum_k \mu_k \geq 2$ (these conditions are simply to ensure that
 (\ref{rfty}) below is meaningful as written).  We have the following
system of short exact sequences  on the curve $\sum_k \mu_k \Gamma_k$
(see \ci[Lemma 3.10]{mreid},  for example)
\beq\la{rfty}
0 \lorw  \m{O}_{\Gamma_{k_o}} (-\Gamma_{k_o}) \lorw 
\m{O}_{\sum_k \mu_k \Gamma_k} \lorw  \m{O}_{(\mu_{k_o}-1)\Gamma_{k_o} + \sum_{k\neq k_o}\mu_k \Gamma_k}
\lorw 0.
\eeq
We have the short exact sequences (\ref{rv0}) on the curves $\Gamma_k$
\beq\la{rft}
0 \lorw  \m{O}_{\Gamma_{k}} (-\Gamma_{k}) \lorw  
\m{O}_{\Gamma_k} \lorw  \m{O}_{\zeta_k}
\lorw 0.
\eeq
We  have  a natural short exact sequence on $\coprod_k \Gamma_k$ arising from the normalization map $\xi_3$
\beq\la{rf}
0 \lorw  \m{O}_{\coprod_k \Gamma_{k}}  \lorw 
\m{O}_{\coprod_k\w{\Gamma_k}} \lorw  \Sigma
\lorw 0,
\eeq
where $\Sigma$ is supported at finitely many points on $\sum_k \Gamma_k.$

\n
Since $\Sigma$ is supported at finitely many points, it follows
from the definition that, for every pair of line bundles $L,M$ on the curve $\sum_k \Gamma_k$, we have that 
$P(\Sigma\otimes L, \Sigma\otimes M)$ is canonically isomorphic to the trivially trivialized,
trivial line bundle on the spectrum of the residue field of $a;$ see \ci[proof of Lemma 4.12.2]{ngofl}.
We call this circumstance, the $P$-triviality property
of $\Sigma$. 
The same holds true for $P(\m{O}_{\zeta_k} \otimes L, \m{O}_{\zeta_k} \otimes M)$, i.e. we have
the $P$-triviality property of  $\zeta_k$.

\n
By what above, and by using the multiplicativity property of the determinant of cohomology with respect to  short exact sequences, and hence of the
operation $P(-,-)$, 
 we see that the second equality of the claim (\ref{wuf}) follows from the short exact sequence
(\ref{rf}) on $\coprod_k \Gamma_k$, by using the $P$-triviality property of $\Sigma,$ and the fact that $\xi^*_k = \xi_{3,k}^*\circ \xi_{4,k}^*$; in fact,   we get,
the identity
\[
P(\xi_{3,k}^*  \xi_{4,k}^*, \xi_{3,k}^*  \xi_{4.k}^*M) = P( \xi_{4,k}^* L,  \xi_{4,k}^*M)  \otimes P(\Sigma \otimes 
\xi_{4,k}^*L, \Sigma \otimes \xi_{4,k}^*M) = P(\xi_{4,k}^*L,\xi_{4,k}^*M).
\]
(N.B. there is no need for the exponents $m_k$, 
for this second equality in (\ref{wuf}).)

\n
The first equality of the claim (\ref{wuf}), and here the exponents $m_k$ are essential, follows in the same way (by using 
the $P$-triviality property for $\zeta_k$) from (\ref{rfty}) and 
(\ref{rft}) by means of a simple  descending induction on the multiplicities $\mu_k \leq m_k$, based on the 
following equalities (where we denote line bundles and their restrictions in the same way, and we instead  add a subfix
to $P(-,-)$)
\[
P_{ \sum_{k} \mu_k \Gamma_k } (L,M) = P_{(\mu_{k_o}-1)\Gamma_{k_o}+ \sum{k\neq k_o} \mu_k \Gamma_k}
(L,M)
\otimes P_{\Gamma_{k_o}} (L-\Gamma_{k_o}, M- \Gamma_{k_o}),
\]
\[
P_{\Gamma_{k_o}} (L-\Gamma_{k_o}, M- \Gamma_{k_o}) = P_{\Gamma_{k_o}} (L, M)
\otimes P_{\zeta_{k_o}} ( L,  M)= P_{\Gamma_{k_o}} (L, M).
\]
We now use the just-proved claim (\ref{wuf}) to verify that the Tate-Weil pairing $TW$ (\ref{tw}) has,
at every geometric point $a$ of $A_n$,  kernel given {\em precisely}  by
the ``affine part"  $T_{\oql} (J_{n,a}^{\rm aff})$, so that it descends to a non-degenerate pairing
on $T_{\oql} (J_{n,a}^{\rm ab})$.

\n
By \ci[\S9.3, Corollary 11]{neron}, we have  the canonical  short exact sequence
\beq\la{q0}
0 \lorw \ke \,\xi^* \lorw   J_{n,a}= {\rm Pic}^0 \left(\m{C}_a=\sum_k m_k \Gamma_k\right) 
\lorw
{\rm Pic}^0(\w{\m{C}_{a,{\rm red}}}) =
\prod_k {\rm Pic}^0\left(\w{\Gamma_k}\right)  \lorw 0,
\eeq
with  quotient an abelian variety and with  affine and {\em connected}  $\ke\, \xi^*$, an iterated extension
of groups of type $\mathbb G_a$ and $\mathbb G_m$. It follows that the above short exact sequence {\em is}
the ``abelian-by-affine" Chevalley devissage  (\S\ref{w2}) of $J_{n,a}$.
By passing to Tate modules, we get the short exact sequence
\beq\la{qw}
0\lorw
T_{\oql} (\ke \,\xi^*) = T_{\oql} (J_{n,a}^{\rm aff}) \lorw T_{\oql}(J_{n,a}) \lorw T_{\oql} (J_{n,a}^{\rm ab}) = \bigoplus_k T_{\oql} ({\rm Pic}^0(\w{\Gamma_k}))  \lorw 0.
\eeq

\n
In view of (\ref{wuf}), and of the definition of the Tate-Weil   pairing via the Weil pairing, we see that
the kernel of the  Tate-Weil pairing contains $T_{\oql} (\ke \,\xi^*)$, so that the Tate-Weil  
pairing $TW:=TW_{\sum_k m_k \Gamma_k}$   descends
to  a paring $TW^{\rm ab} $ on   the abelian part $T_{\oql} ( J_{n,a}^{\rm ab})$ where, again in view of (\ref{wuf}),  it is the direct sum 
of the Tate-Weil pairing $TW_{\w{\Gamma_k}}$ on the individual nonsingular  projective curves $\w{\Gamma_k}$,
{\em multiplied} by the integer $m_k$ 
\beq\la{q10}
TW^{\rm ab}=
\sum_k m_k TW_{\w{\Gamma_k}}.
\eeq
Each $TW_{\w{\Gamma_k}}$ is non-degenerate: in fact, it is  the   Tate-Weil pairing on the 
Tate module of the Jacobian of a nonsingular
projective curve over an algebraically closed field, which, in turn,  can be identified with the cup product
on the first \'etale  $\oql$-adic cohomology group  of the curve; see \ci[Ch. V, Remark 2.4(f), and references therein]{miec}. It follows that their $m_k$-weighted direct sum $TW^{\rm ab}$ is non-degenerate as well.
\end{proof}

\begin{rmk}\la{tapt}   \ci[\S9.2, Thm. 11]{neron} gives a precise structure theorem for the  Jacobians of curves which immediately yields the following description of their abelian variety parts.
Let  $a$ be  a geometric point of  $A_n$ and let $\cu{C}{a} = \sum_k m_k \Gamma_k$ be corresponding spectral curve.
Then we have natural isomorphisms of abelian varieties
\beq\la{3030}
{\rm Pic}^0(\cu{C}{a})^{\rm ab} = {\rm Pic}^0(\cu{C}{a, {\rm red}})^{\rm ab}
=\prod_{k} {\rm Pic}^0(\Gamma_k)^{\rm ab} = \prod_{k} {\rm Pic}^0(\w{\Gamma_k}).
\eeq
\end{rmk}

\subsection{$\delta$-regularity of the action of the Jacobi group scheme over the elliptic locus}\la{00}$\;$

\begin{tm}\la{f4}
The triple $(M_n,A_n,J_n)$ is a weak abelian fibration and its restriction over $A_n^{\rm ell}$ is
a $\delta$-regular weak abelian fibration.
In particular,
\ben
\item
If $a \in {\socle}(R{h_n}_* \oql)$, then 
\beq\la{99}
d_{h_n} - d_{A_n} + d_a \geq d_a^{\rm ab}(J_n)  \quad \mbox{\rm ({\bf Ng\^o support inequality})}.
\eeq
\item
If $a \in A_n^{\rm ell}$, then 
\beq\la{98}
d_a^{\rm ab}(J_n) \geq d_{h_n} - d_{A_n} + d_a \quad \mbox{\rm ({\bf $GL_n$ $\delta$-regularity inequality})}.
\eeq
\een
\end{tm}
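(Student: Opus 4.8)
The plan is to verify the three defining properties of a weak abelian fibration (Definition \ref{waf}) by assembling results already proved, to establish $\delta$-regularity over the elliptic locus, and then to read off the two inequalities as formal consequences.

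First I would check that $(M_n,A_n,J_n)$ is a weak abelian fibration over all of $A_n$. The equality of pure relative dimensions $d_{g_n}=d_{h_n}$ is exactly (\ref{oim}); the polarizability of the Tate module $T_{\oql}(J_n)$ on the whole base is Theorem \ref{poltmd}; and the affine-stabilizer property for the action of $J_n/A_n$ on $M_n/A_n$ is Proposition \ref{affstbz}. These are precisely the three hypotheses of Definition \ref{waf}, so the weak abelian fibration property holds.

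Next I would treat $\delta$-regularity over $A_n^{\rm ell}$. Over the elliptic locus every spectral curve $\m{C}_a$ is geometrically integral, so $J_{n,a}={\rm Pic}^0(\m{C}_a)$; since all spectral curves lie in the fixed linear system $|nC|$, their common arithmetic genus equals $\dim J_{n,a}=d_{h_n}$ and is constant in the family. The Chevalley devissage (\ref{cbr}) then identifies $\delta_a=\dim J_{n,a}^{\rm aff}$ with the drop of geometric genus under normalization, that is, with the $\delta$-invariant of the integral curve $\m{C}_a$. The required estimate ${\rm codim}_{A_n}(S_\delta)\geq \delta$ is Ng\^o's $\delta$-regularity over the elliptic locus \ci{ngofl}; geometrically it reduces to the classical statement that, in a sufficiently ample linear system of curves on a smooth surface, the locus of curves whose $\delta$-invariant is at least $\delta$ has codimension at least $\delta$. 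The mechanism is versality: the positivity $d>2g-2$ makes the Hitchin family $\m{C}/A_n$ miniversal at $a$ for the planar singularities of $\m{C}_a$, while the equigeneric stratum has codimension equal to the local $\delta$-invariant in each miniversal deformation, so pulling back and summing over the singular points gives codimension at least $\delta$ in $A_n^{\rm ell}$. I expect this codimension count to be the genuine obstacle, as it is the one step that is not a formal consequence of the earlier constructions and it is exactly where the hypothesis on $D$ is used.

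Finally I would deduce the two displayed inequalities. Inequality (1), the support inequality (\ref{99}), is the direct application of Theorem \ref{nst} to $(M_n,A_n,J_n)$: the support inequality requires only the weak abelian fibration property together with the nonsingularity of $M_n$, $A_n$ and the projectivity of $h_n$, so it holds over the whole of $A_n$, with no appeal to $\delta$-regularity. Inequality (2), the $GL_n$ $\delta$-regularity inequality (\ref{98}), is the reformulation of $\delta$-regularity recorded in Lemma \ref{gdr}.(2): for $a\in A_n^{\rm ell}$ that lemma yields $d_a^{\rm ab}(J_n)\geq d_a(J_n)-d_{A_n}+d_a$, and substituting $d_a(J_n)=d_{g_n}=d_{h_n}$ from (\ref{oim}) gives exactly (\ref{98}).
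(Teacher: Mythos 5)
Your proposal is correct and takes essentially the same route as the paper: the weak abelian fibration property is assembled exactly as you do from (\ref{oim}), Proposition \ref{affstbz} and Theorem \ref{poltmd}; inequality (\ref{99}) is read off from Theorem \ref{nst} using only projectivity of $h_n$ and nonsingularity of $M_n$; and $\delta$-regularity over $A_n^{\rm ell}$ and (\ref{98}) are interchanged via Lemma \ref{gdr}, equation (\ref{r4x}). The one difference is cosmetic: where you sketch the versality/equigeneric-codimension mechanism (identifying $\delta_a$ with the $\delta$-invariant of the integral spectral curve, with ``versal'' rather than ``miniversal'' being the accurate word for the induced deformation), the paper simply cites this bound as Severi's inequality, referring to \ci[Thm.\ 7.3]{ch-la} and \ci{fa-go-va}.
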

\begin{proof} The two morphisms $h_n$ and $g_n$ have the same pure relative dimension
(\ref{oim}).
The stabilizers of the action are affine by Proposition \ref{affstbz}. The Tate module is polarizable
by Theorem \ref{poltmd}. It follows that the triple is indeed a weak abelian fibration.
Since $h_n$ is projective and $M_n$ is nonsingular,   (\ref{99})
follows from Ng\^o support inequality Theorem \ref{nst}.
The inequality (\ref{98}) is known as 
``Severi's inequality", see \ci[Thm. 7.3]{ch-la}, for references; see also 
\ci[the paragraph following Thm. 2 on p.3]{fa-go-va}.
The $\delta$-regularity assertion (\ref{98}) then follows from Lemma \ref{gdr}, equation (\ref{r4x})).
\end{proof}

  \section{The $SL_n$ weak abelian fibration}\la{s3}$\;$

\S\ref{s3} is devoted to proving Theorem \ref{95}, i.e.  the $SL_n$ counterpart to Theorem \ref{f4} for $GL_n$.
\S\ref{jnc} introduces the group scheme $\check{J}_n/\check{A}_n$ of identity components
of the   Prym group scheme, which, in turn, has fibers (\ref{rb}) that  become disconnected precisely over the endoscopic locus
(\ref{340}).
\S\ref{sw3}  establishes the precise relation between the abelian-variety-parts of the fibers 
of the Jacobi group scheme $J_n/A_n,$ and the ones  of the Prym-like group scheme $\check{J}_n/\check{A}_n$; this is a key step
in establishing the $\delta$-regularity of $\check{J}_n$ over the elliptic locus.
\S\ref{prd} establishes the expected  product  structure of $M_n$, with factors $M_n(0)$ (traceless Higgs bundles)
and $H^0(C,D)$ (space of possible traces); this is another key step towards the $\delta$-regularity above.  
These factorizations are further pursued in \S\ref{lm1r}, where one factors $J_n$ in the same way.
\S\ref{9a} establishes the  $\delta$-regularity of $\check{J}_n/\check{A}_n$ over the elliptic locus
$\check{A}_n^{\rm ell}.$
\S\ref{tnmq} studies in detail the norm morphism associated with arbitrary (not necessarily irreducible, nor reduced) spectral curves.
\S\ref{itpz} establishes the key polarizability of the Tate module of $\check{J}_n$ over the whole base 
$\check{A}_n$ by using: the  explicit form  (\ref{q10}) of the polarization of the Tate module of $J_n$;
the explicit form (\ref{eq01z}) of the norm map; a formal reduction of the  $SL_n$ polarizability result
to the classical fact that, at the level of Tate modules of Jacobians, the maps induced by the pull-back and by the norm are adjoint for the Tate-Weil pairing.
\S\ref{33}  is devoted to binding-up the results of this section by establishing
Theorem \ref{95}, i.e. the $SL_n$ counterpart to  Theorem \ref{f4} for $GL_n,$ to the effect that 
$(\check{M}_n, \check{A}_n, \check{J}_n)$ is a weak abelian fibration
which is $\delta$-regular over the elliptic locus; this yields the support inequality over the whole base $\check{A}_n$, and the $\delta$-regularity  inequality over the elliptic locus $\check{A}_n^{\rm ell}.$
\S\ref{zel} is devoted to spelling-out the supports for the $SL_n$ Hitchin fibration
over the elliptic locus $\check{A}_n^{\rm ell}$; the results over the elliptic locus in this \S\ref{zel}, and for every $G$, are due to B.C. Ng\^o \ci{ngofl}.

\subsection{The action of the Prym group scheme $\check{J}_n$}\la{jnc}$\;$

Let $p: \m{C} \to A_n$ be the family of spectral curves as in \S\ref{spcv}.
 The norm morphism (\ref{nme})  defines  a morphism of group schemes over $A_n$
 (cf.  \ci[Cor. 3.12]{ha-pa}, for example)
\beq\la{nm}
N_p: J_n \lorw  {\rm Pic}^0(C) \times A_n, \qquad L \mapsto \det (p_*L) \otimes [\det (p_*( \m{O}_{\m{C}}))]^{-1}.
\eeq

The $A_n$-morphism  $p: \m{C} \to A_n$ induces the morphism  $p^*: {\rm Pic}(C)\times A \to
J_n$ of group schemes over $A_n$. One verifies that $N_p (p^* (-) )= (-)^{\otimes n};$
see Fact \ref{norm}.(1). In particular, the morphism $N_p$ is surjective. The differential 
of the composition $N_p \circ p^*$ along the identity section 
is multiplication by $n$, so that the morphism $N_p$ is smooth.  
The kernel $\ke (N_p)$ of 
$N_p$  is a closed subgroup scheme that is smooth over $A_n.$ We call it the Prym group scheme.   Its fibers are precisely the Prym
varieties (\ref{rb}). 
Then,  by \ci[Exp VI-B, Thm. 3.10]{sga3.1}, there is the  open subgroup scheme over $A_n$ 
\beq\la{er4}
J'_n:= (\ke (N_p))^0
\eeq
 of the kernel, which (set-theoretically) is the union of the identity  connected components of the  fibers of this kernel group scheme over $A_n$. Since this whole construction is compatible with arbitrary base change,    the fiber  $J'_{n,a}$ over $a \in A_n$ is precisely the identity connected component of the kernel of the norm morphism associated with the spectral cover $\m{C}_a \to C_a =C\otimes k(a).$

We restrict this whole picture to the $SL_n$ Hitchin base $\check{A}_n = A_n(0) \subseteq A_n$
and set 
\beq\la{ert}\check{J}_n:= {J'_n}_{|\check{A}_n},\eeq
which we also call the Prym group scheme.

Then $\check{J}_n/\check{A}_n$ is a smooth connected group scheme with connected fibers  over $\check{A}_n$
that acts on $M_n(0)/A_n(0)$   (trace zero) preserving $\check{M}_n/\check{A}_n$ (trace zero and fixed determinant $\e$);
see Fact \ref{norm}.(3) and the proof of Proposition \ref{kh}.
It follows that $\check{J}_n/\check{A}_n$ acts on $\check{M}_n/\check{A}_n.$

According to Proposition \ref{kh},
on each fiber $\check{M}_{n,a}$, this action is
free  on the open  part given by those rank one torsion free sheaves   which are locally free.
The Hitchin fibers $\check{M}_{n,a}$ corresponding to nonsingular spectral curves
are $\check{J}_{n,a}$-torsors via this action.

\subsection{The   abelian variety  parts}\la{sw3}$\;$

Let $a$ be a geometric point
of $A_n$ ($\check{A}_n$, resp.).
Recalling the Chevalley devissage \S\ref{w2} for $J_{n,a}$ ($\check{J}_{n,a}$, resp.),  we set, by taking dimensions as varieties over the algebraically closed residue field of $a$
\[
d^{\rm ab}_a (J_n):= \dim (J_{n,a}^{\rm ab}), \qquad \check{d}^{\rm ab}_a (\check{J}_n):= \dim (\check{J}_{n,a}^{\rm ab});
\]
these dimensions depend only on the Zariski point underlying $a$.

 \begin{lm}\la{azz}
 For every point $a \in \check{A}_n$, we have that:
 \beq\la{eq0}
 d^{\rm ab}_a(\check{J}_n) \geq d_a^{\rm ab}(J_n)-g.
 \eeq
 \end{lm}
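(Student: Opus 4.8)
The plan is to compare the two Chevalley devissages of \S\ref{w2} fiber by fiber, and to show that passing from the Jacobian to the Prym lowers the total dimension by exactly $g$ while leaving the affine (linear) part untouched. Fix a geometric point $a$ over the given Zariski point of $\check{A}_n$, and write $J_{n,a} = {\rm Pic}^0(\m{C}_a)$ and $\check{J}_{n,a} = {\rm Prym}_a^0$, the identity connected component of $\ke (N_{p_a})$. Since $C_a = C\otimes k(a)$ is a smooth projective curve of genus $g$, the target ${\rm Pic}^0(C_a)$ of the norm homomorphism is an abelian variety of dimension $g$, and by Fact \ref{norm}.(1) the map $N_{p_a}: J_{n,a} \to {\rm Pic}^0(C_a)$ is surjective. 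Consequently $\dim \check{J}_{n,a} = \dim J_{n,a} - g$, since taking the identity component does not change dimensions.

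The key step I would isolate is that the two group schemes have the \emph{same} affine part at $a$, i.e. $\check{J}_{n,a}^{\rm aff} = J_{n,a}^{\rm aff}$. On the one hand, the maximal connected affine subgroup $J_{n,a}^{\rm aff}$ is a connected linear algebraic group, so its image under $N_{p_a}$ inside the abelian variety ${\rm Pic}^0(C_a)$ is trivial; hence $J_{n,a}^{\rm aff} \subseteq \ke (N_{p_a})$, and being connected it lands in the identity component $\check{J}_{n,a}$. On the other hand, the maximal connected affine subgroup $\check{J}_{n,a}^{\rm aff}$ of $\check{J}_{n,a} \subseteq J_{n,a}$ is a connected affine subgroup of $J_{n,a}$, hence is contained in $J_{n,a}^{\rm aff}$. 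The two inclusions, together with the maximality built into the Chevalley devissage, force the equality of the affine parts.

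Combining the two steps, I would conclude
\[
\check{d}_a^{\rm ab}(\check{J}_n) = \dim \check{J}_{n,a} - \dim \check{J}_{n,a}^{\rm aff} = \bigl(\dim J_{n,a} - g\bigr) - \dim J_{n,a}^{\rm aff} = d_a^{\rm ab}(J_n) - g,
\]
so that (\ref{eq0}) holds, in fact with equality. Equivalently, quotienting by the common affine part $J_{n,a}^{\rm aff}$, the abelian variety $\check{J}_{n,a}^{\rm ab}$ is precisely the identity component of the kernel of the surjection $J_{n,a}^{\rm ab} \to {\rm Pic}^0(C_a)$ induced by the norm (the norm kills $J_{n,a}^{\rm aff}$ and descends to the abelian parts), which has dimension $d_a^{\rm ab}(J_n) - g$.

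The argument is essentially formal once the structure of the norm map is in hand, and I do not expect a serious obstacle. The only points demanding care are that one must work throughout with identity connected components --- the full Prym ${\rm Prym}_a$ may be disconnected over the endoscopic locus (\ref{340}), but this affects neither the dimensions nor the affine parts --- and that one invokes correctly the two standard facts used above: that a homomorphism from a connected linear algebraic group to an abelian variety is trivial, and that the maximal connected affine subgroup behaves monotonically under inclusion of connected commutative group schemes. The substantive content is simply that the norm drops the dimension by exactly $g$ while preserving the linear part, which is what yields the sharp bound (\ref{eq0}).
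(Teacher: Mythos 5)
Your proof is correct. For the inequality actually asserted in the lemma, it is essentially the paper's own two-line argument: the inclusion $\check{J}^{\rm aff}_{n,a} \subseteq J^{\rm aff}_{n,a}$, forced by the maximality of $J^{\rm aff}_{n,a}$ in the Chevalley devissage, combined with $\dim (J_{n,a}) = \dim (\check{J}_{n,a}) + g$ (surjectivity, in fact smoothness, of the norm morphism, cf. \S\ref{jnc}), gives (\ref{eq0}). Where you go beyond the paper's proof of the lemma is in establishing the reverse inclusion $J^{\rm aff}_{n,a} \subseteq \check{J}^{\rm aff}_{n,a}$, hence the identity $\check{J}^{\rm aff}_{n,a} = J^{\rm aff}_{n,a}$ and equality in (\ref{eq0}); this is precisely the content of (\ref{lk}) and (\ref{eq000}) in the subsequent Proposition \ref{azz=}, which the paper proves by a different method, namely a snake-lemma chase on the commutative diagram (\ref{bdgr}) of Chevalley devissages. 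Your route --- $N_{p_a}$ is trivial on the connected affine group $J^{\rm aff}_{n,a}$ because any homomorphism from a connected linear algebraic group to an abelian variety is trivial, so $J^{\rm aff}_{n,a}$ lands in the identity component $\check{J}_{n,a}$ of $\ke (N_{p_a})$, and the two maximality inclusions then force equality of the affine parts --- is shorter and more conceptual as a proof of (\ref{lk}). What the paper's snake-lemma argument buys in exchange is the additional structural output: the isomorphism (\ref{jh}), $J_{n,a}/\check{J}_{n,a} \cong J^{\rm ab}_{n,a}/\check{J}^{\rm ab}_{n,a}$, and the isogeny (\ref{hg}) onto ${\rm Pic}^0(C_a)$, both of which are used later, in the diagram (\ref{fgr}) and in Lemma \ref{lzt}.(2); your closing observation, identifying $\check{J}^{\rm ab}_{n,a}$ with the identity component of the kernel of the descended norm on abelian parts, recovers essentially the same information with one further easy step. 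The points of care you flag --- working with identity components rather than the possibly disconnected full Prym, triviality of maps from connected linear groups to abelian varieties, and monotonicity of the maximal connected affine subgroup under inclusions --- are exactly the right ones, and you handle them correctly.
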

 \begin{proof}
 Since $J^{\rm aff}_{n,a}$ is the biggest affine normal connected group subscheme inside $J_{n,a}$,
 we must have $d^{\rm aff}_a (\check J_n)  \leq d^{\rm aff}_a (J_n).$
Since $\dim{(J_{n,a})} = \dim{(\check{J}_{n,a})} + g,$ the conclusion follows.
\end{proof}

In fact, as Proposition \ref{azz=} below  shows, the inequality of Lemma \ref{azz} is an equality. 

\begin{pr}\la{azz=}
 For every geometric point $a$ of  $\check{A}_n$, we have that:
 \beq\la{eq000}
 d^{\rm ab}_a(\check{J}_n) = d_a^{\rm ab}(J_n)-g. 
 \eeq
 More precisely, we have 
 \beq\la{lk}
 \check{J}_{n,a}^{\rm aff} = J_{n,a}^{\rm aff} \subseteq J_{n,a},
\eeq
\beq\la{jh} 
 J_{n,a}/\check{J}_{n,a} \cong J^{\rm ab}_{n,a}/\check{J}^{\rm ab}_{n,a},
 \eeq
 and a natural isogeny   
 \beq\la{hg}
  J^{\rm ab}_{n,a}/\check{J}^{\rm ab}_{n,a} \lorw {\rm Pic}^0(C_a).
  \eeq
 \end{pr}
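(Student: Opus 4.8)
The plan is to exploit the structure recalled in \S\ref{jnc}: on the fiber over $a$ the norm restricts to a smooth surjective homomorphism of commutative group schemes $N_{p_a}\colon J_{n,a}={\rm Pic}^0(\m{C}_a)\lorw {\rm Pic}^0(C_a)$ whose kernel ${\rm Prym}_a$ has identity component $\check{J}_{n,a}$, together with the two inputs that ${\rm Pic}^0(C_a)$ is an abelian variety of dimension $g$ and that a connected affine algebraic group admits no nonconstant homomorphism to an abelian variety. Granting these, the entire proposition becomes formal devissage (\ref{cbr}) and repeated use of the isomorphism theorems; commutativity of $J_{n,a}$ makes every subgroup in sight normal.

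First I would prove (\ref{lk}). Restricting $N_{p_a}$ to the connected affine subgroup $J_{n,a}^{\rm aff}$ gives a homomorphism into ${\rm Pic}^0(C_a)$; its image is a closed, hence proper, subgroup of an abelian variety, and being also affine it is finite, and being connected it is trivial. Thus $J_{n,a}^{\rm aff}\subseteq \ke(N_{p_a})$, and since $J_{n,a}^{\rm aff}$ is connected it lands in the identity component $\check{J}_{n,a}$; being affine and connected inside $\check{J}_{n,a}$ it lies in $\check{J}_{n,a}^{\rm aff}$. Conversely, $\check{J}_{n,a}^{\rm aff}$ is a connected affine subgroup of $J_{n,a}$, hence contained in the maximal such subgroup $J_{n,a}^{\rm aff}$. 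The double inclusion yields $J_{n,a}^{\rm aff}=\check{J}_{n,a}^{\rm aff}$, which is (\ref{lk}).

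Writing $A:=J_{n,a}^{\rm aff}=\check{J}_{n,a}^{\rm aff}$, passing to the quotient by the common normal subgroup $A$ realizes $\check{J}_{n,a}^{\rm ab}=\check{J}_{n,a}/A$ as a subgroup of $J_{n,a}^{\rm ab}=J_{n,a}/A$, and the third isomorphism theorem gives (\ref{jh}):
\[J_{n,a}/\check{J}_{n,a}\cong (J_{n,a}/A)/(\check{J}_{n,a}/A)=J^{\rm ab}_{n,a}/\check{J}^{\rm ab}_{n,a}.\]
Since $A=\ke(J_{n,a}\to J_{n,a}^{\rm ab})$ is contained in $\check{J}_{n,a}=\ke(N_{p_a})^0$, the norm descends to $J_{n,a}/\check{J}_{n,a}$ and, through (\ref{jh}), produces a homomorphism $J^{\rm ab}_{n,a}/\check{J}^{\rm ab}_{n,a}\lorw {\rm Pic}^0(C_a)$. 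It is surjective because $N_{p_a}$ is (Fact \ref{norm}.(1)), and its kernel is $\ke(N_{p_a})/\check{J}_{n,a}=\pi_0({\rm Prym}_a)$, which is finite; hence it is the isogeny (\ref{hg}). Taking dimensions in this isogeny gives $d^{\rm ab}_a(J_n)-d^{\rm ab}_a(\check{J}_n)=g$, i.e.\ (\ref{eq000}).

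The only genuinely non-formal step is the identification of the affine parts (\ref{lk}); everything afterwards is bookkeeping. The point to watch is that ${\rm Prym}_a$ may be disconnected, precisely over the endoscopic locus (\ref{340}), so that (\ref{hg}) is only an \emph{isogeny} with finite kernel $\pi_0({\rm Prym}_a)$, not an isomorphism; the equality (\ref{eq000}) is insensitive to this and sharpens Lemma \ref{azz}. I would also take care to confirm that $J^{\rm ab}_{n,a}/\check{J}^{\rm ab}_{n,a}$ is again an abelian variety so that ``isogeny'' is meaningful: $\check{J}^{\rm ab}_{n,a}$ is the image of $\check{J}_{n,a}$ under $J_{n,a}\to J_{n,a}^{\rm ab}$ (whose kernel $A$ lies in $\check{J}_{n,a}$), hence an abelian subvariety of $J^{\rm ab}_{n,a}$, and the quotient of an abelian variety by an abelian subvariety is an abelian variety.
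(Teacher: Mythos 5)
Your proof is correct, and it reaches the same conclusions as the paper by a genuinely different organization of the argument. You establish the equality of affine parts (\ref{lk}) \emph{first} and directly: the restriction of $N_{p_a}$ to $J^{\rm aff}_{n,a}$ has connected, affine, complete image in the abelian variety ${\rm Pic}^0(C_a)$, hence is trivial, so $J^{\rm aff}_{n,a}\subseteq (\ke (N_{p_a}))^0=\check{J}_{n,a}$, and maximality in the two Chevalley devissages gives the double inclusion; after that, (\ref{jh}), (\ref{hg}) and (\ref{eq000}) are pure bookkeeping (third isomorphism theorem, descent of the norm modulo $\check{J}_{n,a}$, finiteness of $\pi_0({\rm Prym}_a)$). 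The paper argues in the opposite order: it first observes that the surjective norm, whose kernel has identity component $\check{J}_{n,a}$, makes $J_{n,a}/\check{J}_{n,a}$ an abelian variety of dimension $g$ (via the isogeny onto ${\rm Pic}^0(C_a)$), then assembles the two devissages into the commutative diagram (\ref{bdgr}) and runs the snake lemma on the maps $u,v,w$ there, killing $\coke{\,u}/\ke{\,w}$ as a connected affine subgroup of the abelian variety $J_{n,a}/\check{J}_{n,a}$; the equality (\ref{lk}) falls out at the end rather than being the starting point. Both proofs rest on the same two inputs --- surjectivity of the norm (Fact \ref{norm}.(1)) and the principle that a connected affine group admits no nonconstant homomorphism to an abelian variety --- but yours applies that principle once, directly to $N_{p_a}|_{J^{\rm aff}_{n,a}}$, avoiding the snake lemma entirely, and it makes the finite kernel of (\ref{hg}) explicit as $\pi_0({\rm Prym}_a)$, which connects nicely to the endoscopic locus (\ref{340}). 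In exchange, the paper's diagram chase produces the injectivity of $\check{J}^{\rm ab}_{n,a}\to J^{\rm ab}_{n,a}$ as part of the chase ($\ke{\,w}$ trivial), whereas in your route this needs the separate verification you rightly supply in your final paragraph ($\check{J}_{n,a}\cap J^{\rm aff}_{n,a}=J^{\rm aff}_{n,a}$, so $\check{J}^{\rm ab}_{n,a}$ embeds as an abelian subvariety and the quotient is again an abelian variety, making ``isogeny'' meaningful).
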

 \begin{proof}
 Recall that 
we have the surjective norm morphism $N_p: J_{n,a} = {\rm Pic}^0(\m{C}_a) \to {\rm Pic}^0(C_a)$ and that
 $\check{J}_{n,a}:= (\ke{(N_p)})^0.$ We thus obtain the natural isogeny $J_{n,a}/\check{J}_{n,a} \to {\rm Pic}^0(C_a)$.
 In particular, $J_{n,a}/\check{J}_{n,a}$ is an abelian variety of dimension $g.$
  
 \n
 In view of the Chevalley devissage construction, we
 have the commutative diagram of short exact sequences of morphisms
 \beq\la{bdgr}
 \xymatrix{
 0 \ar[r]  & \check{J}^{\rm aff}_{n,a} \ar[r] \ar@{.>}[d]^u &  \check {J}_{n,a}  \ar[r] \ar[d]^v &    \check{J}^{\rm ab}_{n,a} \ar[r] \ar@{.>}[d]^w & 0 \\
  0 \ar[r]  & J^{\rm aff}_{n,a} \ar[r]  &  J_{n,a}  \ar[r]  &    J^{\rm ab}_{n,a} \ar[r]  & 0,
  }
 \eeq
 where: $v$ is the natural inclusion; $u$, also an inclusion, arises from the fact
 that in the Chevalley devissage, $J^{\rm aff}_{n,a}$ is the biggest connected affine subgroup of $J_{n,a}$,
 so that it contains all other connected affine subgroups of $J_{n,a}$, so that it contains $\check{J}^{\rm aff}_{n,a}$;
$w$ is the natural map induced 
 by the commutativity of the l.h.s. square.
 
 \n
 The snake lemma yields a natural exact sequence:
 \[
 0 \to \ke {\,u} \to \ke{\,v} \to \ke{\,w} \to \coke{\,u} \to \coke{\,v} \to \coke{\,w} \to 0,
 \]
 which, in view of the fact that $u,v$ are injective,  reduces to 
 \[
 0\to  \coke{\,u}/\ke{\, w} \to J_{n,a}/\check{J}_{n,a} \to  J^{\rm ab}_{n,a}/ (\check{J}^{\rm ab}_{n,a}/\ke{\, w}) \to 0.
 \]
 Since $\ke{\,w}$ sits inside the abelian variety $\check{J}^{\rm ab}$ and inside the affine $\coke{\,u}$, 
 it is a finite group.
 
 \n
 Since $\coke{\,u}/\ke{\, w}$ is affine, connected,  and  sits inside the abelian variety $J_{n,a}/\check{J}_{n,a}$,
 it is trivial. It follows that $\coke{\,u}=\ke{\, w}$, and since $\coke{\,u}$ is connected, so is the finite  $\ke{\,w}$ which is thus trivial. In particular, $\coke {\,u}$ is  also trivial and $\check{J}^{\rm aff}_{n,a} = J^{\rm aff}_{n,a}.$
 
 \n
 It follows that $J_{n,a}/\check{J}_{n,a} = J^{\rm ab}_{n,a}/\check{J}^{\rm ab}_{n,a}$,
and  we are done.
\end{proof}

\subsection{Product structures}\la{prd}$\;$

\begin{lm}\la{lm1}
There is the cartesian diagram  with  $q, q'$  isomorphisms
\beq\la{codgsq}
\xymatrix{
H^0(C,D)  \times  M_n(0)  \ar[d]_{{\rm Id} \times  h_n(0)} \ar[r]^{\hskip 12mm q'}_{\hskip 11mm \sim} & M_n \ar[d]^{h_n} \\
H^0(C,D) \times  A_n(0)   \ar[r]^{\hskip 11mm q}_{\hskip 11mm \sim} & A_n.
}
\eeq

\end{lm}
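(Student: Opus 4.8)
The plan is to realize both horizontal arrows as the algebraic shear that adds back a scalar multiple of the trace, and then to verify that it intertwines the two Hitchin morphisms. Given $s\in H^0(C,D)$ and a traceless pair $(E,\psi)\in M_n(0)$ (so ${\rm tr}(\psi)=0$), I would set
\[
q'\bigl(s,(E,\psi)\bigr):=\Bigl(E,\ \psi+\tfrac{1}{n}\,s\cdot {\rm Id}_E\Bigr),
\]
where $\tfrac1n s\cdot {\rm Id}_E\colon E\to E(D)$ is multiplication by the section $\tfrac1n s$ of $D$; this is legitimate because the ground field has characteristic zero. The candidate inverse sends $(E,\phi)\in M_n$ to $\bigl({\rm tr}(\phi),\,(E,\phi-\tfrac1n {\rm tr}(\phi)\cdot{\rm Id}_E)\bigr)$, using that ${\rm tr}(\phi)\in H^0(C,D)$. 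On the base, $q$ is the corresponding translation of characteristic polynomials: writing $Q_b(t)=t^n+\sum_{i=2}^n b(i)\,t^{n-i}$ for the traceless polynomial attached to $b=(b(2),\dots,b(n))\in A_n(0)$, I would declare $q(s,b)$ to be the coefficient tuple of $Q_b\bigl(t-\tfrac1n s\bigr)$.

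First I would record the one genuinely algebraic point: the shear preserves stability. A subbundle $F\subseteq E$ is $\phi$-invariant if and only if it is $(\phi-c\cdot{\rm Id}_E)$-invariant for any section $c$ of $D$, since $c\cdot{\rm Id}_E$ preserves every subbundle; moreover the slopes $\mu(F),\mu(E)$ are unchanged. Hence $q'$ and its inverse carry stable pairs to stable pairs, and they preserve both the degree $e$ and the determinant of $E$. Next I would upgrade these pointwise recipes to morphisms of varieties: since $(n,e)=1$, both $M_n$ and $M_n(0)$ are fine moduli spaces, and the shear is defined on an arbitrary family $(E_T,\phi_T)$ over a base $T$ (just add $\tfrac1n s\cdot{\rm Id}_{E_T}$), so the universal property produces the morphism $q'$ and, symmetrically, its inverse; thus $q'$ is an isomorphism. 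The map $q$ is polynomial and triangular with unipotent ``diagonal'' --- one computes $a(1)=-s$ and $a(i)=b(i)+(\text{polynomial in }s,b(2),\dots,b(i-1))$ --- hence an isomorphism of affine spaces, the inverse being the opposite translation together with $s=-a(1)$.

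Then I would check that the square commutes by a one-line eigenvalue computation: the $D$-valued eigenvalues of $\psi+\tfrac1n s\cdot{\rm Id}_E$ are those of $\psi$ shifted by $\tfrac1n s$, so the characteristic polynomial of $q'(s,(E,\psi))$ is precisely $Q_b(t-\tfrac1n s)$ with $b=h_n(0)(E,\psi)$; that is, $h_n\circ q'=q\circ({\rm Id}\times h_n(0))$. Finally, a commutative square whose two horizontal arrows are isomorphisms is automatically cartesian, which gives the assertion. I expect the only step needing care --- the ``main obstacle'' --- to be the passage from a pointwise bijection to an isomorphism of schemes, and this is dispatched cleanly by the fine-moduli description rather than by any delicate geometry; the dimension bookkeeping $\dim M_n=n^2d+1=h^0(D)+\bigl(n^2d+1-h^0(D)\bigr)=\dim\bigl(H^0(C,D)\times M_n(0)\bigr)$ provides a reassuring consistency check.
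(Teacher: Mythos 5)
Your proof is correct and follows essentially the same route as the paper's: the paper likewise defines the shear $q'(\sigma,(E,\phi))=(E,\phi+\sigma\,{\rm Id}_E)$ (your version differs only by the harmless reparametrization $\sigma\mapsto \tfrac1n s$ of the $H^0(C,D)$ factor, legitimate in characteristic zero), checks that adding a scalar twist preserves invariant subsheaves and slopes hence stability, observes that the induced base map is triangular with unipotent structure so that a simple recursion inverts it, and deduces cartesianness from the horizontal arrows being isomorphisms. Your explicit appeal to the fine-moduli property (via $(n,e)=1$) to upgrade the pointwise shear to a morphism of varieties is left implicit in the paper but is consistent with its standing assumptions.
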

\begin{proof}
The map $q'$ is defined by the assignment 
$(\s, (E, \phi)) \mapsto (E, \phi + \s {\rm Id}_E)$.
Since  $\phi$ preserves a subsheaf of $E$ if and only if $\phi  + \s {\rm Id}_E$ does the same, we have    that
 $q'$ preserve stability.
The inverse assignment  to $q'$ is  $(E, \phi) \mapsto \left(\frac{{\rm tr}(\phi)}{n}, (E, \phi - \frac{{\rm tr}(\phi) }{n}{\rm Id}_E
)\right)$.

\n
Let $p(M)(t) = \det (t{\rm Id}-M)=\sum_{i=0}^n (-1)^i m_i t^{n-i}$ be the characteristic polynomial of an  $n\times n$ matrix $M$. Let $s$ be a scalar. Then a simple calculation shows that

\beq\la{newton}
p(M+s {\rm Id})(t) = \sum_{i=0}^n
(-1)^i \left[ m_i + \sum_{j=1}^{i-1} {n-i+j \choose j} m_{i-j} s^j  + {n\choose i} s^i \right] t^{n-i},
\eeq
where we have broken up the summation in square bracket to emphasize that the coefficients of $t^{n-i}$
is linear in $m_i,$ and to  identify the coefficient of  $s^i$. 

\n
The shape of $q$ is dictated by the desire  to have (\ref{codgsq})  commutative and by  the relation 
(\ref{newton}) between the characteristic polynomial
of $\phi$  and the one of  $ \phi+ \s {\rm Id}_E$.
We thus define $q$ by the   assignment  (N.B.: there is no $u_1$, so $j\neq i-1$, hence the upper bound $j=i-2$
in the summation below)
\beq\la{sub}
(\s, u_2, \ldots u_n)
\longmapsto \left(n\s, \left\{u_i + \sum_{j=1}^{i-2}  {n-i+j\choose j} \s^j u_{i-j} +{n\choose i} \s^i\right\}_{i=2}^n \right).
\eeq
E.g.: $q: (\s, u_2,u_3) \mapsto (3\s, u_2+3\s^2, u_3+u_2\s + \s^3).$
A simple recursion,  based on the fact that $u_i$ appears linearly in the component labelled by $i$, shows that the assignment above  can be inverted and that $q$ is an isomorphism.

\n
It is immediate to verify that the square diagram is commutative. 
Since  the morphisms  $q$ and $q'$ are isomorphisms,
the diagram is cartesian.
\end{proof}

\begin{lm}\la{lm2}
There is a natural commutative diagram  of proper morphisms with cartesian square
\beq\la{codgga}
\xymatrix{
{\rm Pic}^0(C)  \times \check{M}_n  \ar[dd]_{{\rm pr}_1} \ar[rr]^r  \ar[rd]_{\check{h}_n \circ {\rm pr}_2} && M_n(0)\ar[dd]^{\det}
\ar[ld]^{h_n(0)} \\
&  \check{A}_n=A(0) & \\
{\rm Pic}^0(C) \ar[rr]^{r'} & & {\rm Pic}^e(C)
}
\eeq
with $r$ and $r'$ proper Galois \'etale covers with Galois group the finite subgroup ${\rm Pic}^0[n] \subseteq
{\rm Pic}^0(C)$ of line bundles of order $n$. 
\end{lm}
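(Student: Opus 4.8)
The plan is to write down the two horizontal maps by hand and then deduce every assertion from the cartesian property together with the elementary fact that multiplication by $n$ on the abelian variety ${\rm Pic}^0(C)$ is a finite \'etale isogeny. I define $r$ to be the orbit map of the ${\rm Pic}^0(C)$-action $L\cdot(F,\psi)=(F\otimes L,\psi\otimes{\rm Id}_L)$ of Lemma \ref{rt6}, that is $r(L,(F,\psi)):=(F\otimes L,\psi\otimes{\rm Id}_L)$, and I define $r'(L):=\e\otimes L^{\otimes n}$. The map $r$ is well defined with values in $M_n(0)$: tensoring by $L$ preserves tracelessness and, exactly as in Lemma \ref{rt6}, preserves stability, while $\det(F\otimes L)=\det(F)\otimes L^{\otimes n}=\e\otimes L^{\otimes n}$; similarly $r'$ has values in ${\rm Pic}^e(C)$ because $\deg(L^{\otimes n})=0$.

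Commutativity of the whole diagram is then immediate. The square commutes by the determinant formula just used, $\det\circ r=r'\circ{\rm pr}_1$. The upper triangle commutes because the characteristic polynomial of a twisted endomorphism is unchanged under tensoring by a line bundle (the canonical identification ${\rm End}(F\otimes L)\cong{\rm End}(F)$ carries $\psi\otimes{\rm Id}_L$ to $\psi$), so that $h_n(0)\circ r=\check{h}_n\circ{\rm pr}_2$, using $\check{h}_n={h_n(0)}_{|\check{M}_n}$. As for $r'$, I factor it as multiplication by $n$ on ${\rm Pic}^0(C)$ followed by the translation isomorphism $M\mapsto\e\otimes M$ onto ${\rm Pic}^e(C)$. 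In characteristic zero, multiplication by $n$ on ${\rm Pic}^0(C)$ is a finite \'etale isogeny of degree $n^{2g}$, Galois with deck group its kernel ${\rm Pic}^0(C)[n]$ acting by $L\mapsto L\otimes\tau$ (indeed $r'(L\otimes\tau)=r'(L)$ since $\tau^{\otimes n}=\m{O}_C$). Hence $r'$ is a finite, a fortiori proper, Galois \'etale cover with group ${\rm Pic}^0(C)[n]$.

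The substantive step is the cartesianness of the square, which I establish by producing an explicit inverse. The natural map $(L,(F,\psi))\mapsto(L,r(L,(F,\psi)))$ into the fiber product ${\rm Pic}^0(C)\times_{{\rm Pic}^e(C)}M_n(0)$ admits the two-sided inverse $(L,(E,\phi))\mapsto(L,(E\otimes L^{-1},\phi\otimes{\rm Id}_{L^{-1}}))$; this is well defined because the fiber-product relation $\det(E)=\e\otimes L^{\otimes n}$ forces $\det(E\otimes L^{-1})=\e$, so the image lies in $\{L\}\times\check{M}_n$. Granting the square is cartesian, $r$ is the base change of $r'$ along $\det$, and since finite \'etale Galois covers, together with their Galois groups, are stable under arbitrary base change, $r$ is a proper Galois \'etale cover with the same group ${\rm Pic}^0(C)[n]$; transporting the translation action through the explicit inverse shows that $\tau\in{\rm Pic}^0(C)[n]$ acts as the deck transformation $\sigma_\tau:(L,(F,\psi))\mapsto(L\otimes\tau,(F\otimes\tau^{-1},\psi\otimes{\rm Id}))$, which one checks directly satisfies $r\circ\sigma_\tau=r$ and preserves $\check{M}_n$. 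The only point that genuinely requires attention is the well-definedness of $r$ (stability preservation), inherited from Lemma \ref{rt6}; once the explicit inverse above is in hand, the cartesian property and the Galois structure of $r$ are purely formal.
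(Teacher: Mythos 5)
Your construction coincides with the paper's at every step where nothing can go wrong: the same formulas for $r$ and $r'$, the same commutativity checks, the same factorization of $r'$ through multiplication by $n$ on ${\rm Pic}^0(C)$, and the same pointwise inverse $(L,(E,\phi))\mapsto (L,(E\otimes L^{-1},\phi\otimes{\rm Id}_{L^{-1}}))$. The one place where you diverge is the only place where something nontrivial happens, and there your argument has a gap: you declare this formula to be a ``two-sided inverse'' and conclude that cartesianness, and hence everything else, is ``purely formal.'' As written, the formula only inverts on closed points, i.e.\ it shows that the canonical morphism $u:{\rm Pic}^0(C)\times\check{M}_n\to F:={\rm Pic}^0(C)\times_{{\rm Pic}^e(C)}M_n(0)$ (your forward map is exactly $u$, furnished by the universal property of the fiber product) is bijective. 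A bijective morphism of varieties need not be an isomorphism even in characteristic zero --- the normalization of a cuspidal cubic is bijective --- and it is not evident that your pointwise recipe, which tensors by $L^{-1}$ as $L$ varies, is a morphism out of the scheme $F$: to realize it algebraically one would pull back the universal Higgs family (available since $(n,e)=1$), twist by the inverse of a Poincar\'e bundle on ${\rm Pic}^0(C)\times C$, check via seesaw that the fiberwise determinant is $\e$ (the Poincar\'e bundle's ambiguity by a twist from the base is harmless for the classifying map), and only then invoke the classifying morphism to $\check{M}_n$. You supply none of this, so the crucial step is asserted rather than proved.

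The paper closes exactly this gap by a different, cheaper route: it observes that $F$ is nonsingular (because $r'$ is \'etale and $M_n(0)$ is nonsingular) and integral (because $\det$ is smooth with integral fibers, by Lemma \ref{rt6}), so that the bijective morphism $u$ between nonsingular varieties factors as $u=f\circ j$ with $j$ an open immersion and $f$ finite and birational onto a normal variety; hence $f$, then $j$, then $u$ are isomorphisms. Either repair --- the smoothness/Zariski-main-theorem argument, or the Poincar\'e-bundle construction of the inverse as an honest morphism --- must be supplied; the statement is not formal from a set-theoretic inverse. Once cartesianness is in hand, your deduction that $r$ is a proper Galois \'etale cover with group ${\rm Pic}^0(C)[n]$ by base change from $r'$, with the explicit deck action $\sigma_\tau$, is correct, and in fact makes explicit a point the paper leaves implicit.
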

\begin{proof}
The map $r$ is defined by the assignment $(L, (E, \phi))
\mapsto (E \otimes L, \phi \otimes {{\rm Id}_L}).$  Since $\check{M}_n$ is the closure of the loci of stable Higgs pairs  with stable underlying vector bundle, it is clear that, as indicated in (\ref{codgga}), $r$ maps into the closure $M_n(0)$ of the loci of stable Higgs pairs  with stable underlying vector bundle. 

\n
The map $r'$ is defined by the assignment $(L \mapsto \e  \otimes L^{\otimes n})$ (rem: $\e \in {\rm Pic}^e(C)$ is the fixed line bundle used to define $\check{M}_n$). The map $r'$
is finite, \'etale and Galois, with Galois group the subgroup ${\rm Pic}^0(C)[n] \subseteq {\rm Pic}(C)$ of
$n$-torsion points.

\n
By construction,  (\ref{codgga})   is  commutative. We need to show that the square is cartesian.
 
 \n 
Let $F$ be the fiber product of $r'$ and $\det.$  
Since $r'$ is \'etale and $M_n(0)$ is nonsingular, $F$ is nonsingular.
Since, by virtue of Lemma \ref{rt6},  $\det$ is smooth with integral fibers, 
then so  is  the natural projection $F \to {\rm Pic}^0(C),$ and $F$  is integral.
By the universal property of fibre products, we have a natural map $u:{\rm Pic}^0(C)  \times \check{M}_n \to F$ making the evident diagram commutative.
This map is bijective on closed points, where the inverse is given by
$(L, (E, \phi)) \mapsto (L, (E \otimes L^{-1}, \phi \otimes {\rm Id}_{L^{-1}}))$.
Since the domain and range of $u$ are nonsingular and $u$ is bijective, we conclude that
$u$ is an isomorphism: factor $u= f\circ j$, with $j$ an open immersion and $f$ finite and birational,
so that $f$ is necessarily an isomorphism, and $j$ is bijective, hence an isomorphism as well.
\end{proof}

\subsection{Product structures, re-mixed}\la{lm1r}$\;$

In analogy with  Lemma \ref{lm1}, and keeping in mind the construction of spectral curves \S\ref{spcv}
as the  universal divisor inside of $V(D)\times A_n$,
we have the cartesian square diagram with $q,q''$ isomorphisms
\beq\la{c1}
\xymatrix{
H^0(C,D) \times A_n(0) \times  V(D) \ar[d]_{{\rm pr}_1 \times  {\rm pr}_2} \ar[r]^{\hskip 11mm q''}_{\hskip 11mm \sim} & A_n \times V(D) \ar[d]^{{\rm pr}_1} & (\s,u_\bullet, (x,v)) \mapsto (q(\s,u_\bullet), (x, v+\s))  \\
H^0(C,D) \times  A_n(0)   \ar[r]^{\hskip 11mm q}_{\hskip 11mm \sim} & A_n, &
}
\eeq
where $(\s, u_\bullet = (u_2, \ldots, u_n)) \in H^0(C,D)\times A(0)$ and $(x,v) \in V(D)_x$ is  the line fiber
of $V(D)$ over a point $x\in C$. For every  fixed $(\s, u_\bullet)$, the resulting morphism $q'': V(D)\stackrel{\sim}\to V(D)$
is, fiber-by-fiber, the translation  in the line direction by the amount $\s$ (linear change of coordinates
$t \mapsto t+\s)$ (cf. \ci[Rmk. 2.5]{ha-pa}).  

Consider the spectral curve family $\m{C} \subseteq A_n \times V(D)$ and the pre-image $\m{C}(0)$
of $A_n(0)$. 
Then, by restricting  $q''$ to $\m{C},$ we obtain  a cartesian square diagram
\beq\la{c2}
\xymatrix{
H^0(C,D)  \times  \m{C}(0)  \ar[d]_{{\rm Id} \times  p(0) } \ar[r]^{\hskip 11mm q'''}_{\hskip 11mm \sim} & \m{C} \ar[d]^p \\
H^0(C,D) \times  A(0)   \ar[r]^{\hskip 11mm q}_{\hskip 11mm \sim} & A,
}
\eeq
with  $q, q'''$  isomorphisms. For every fixed $(\s, u_\bullet) \in H^0(C,D) \times A_n(0)$, we have the spectral curve
$({\rm Id}\times p(0))^{-1} (\s, u_\bullet) = p(0)^{-1} (u_\bullet)=\m{C}_{0,u_\bullet}$
The morphism $q''$ maps $\m{C}_{0,u_\bullet}$ isomorphically onto $\m{C}_{q (\s, u_\bullet)}$,
via the fiber-by-fiber translation by the amount $\s$.

By recalling that $J_n(0)={J_n}_{|A_n(0)},$ and by setting $q^{iv}:= ((q''')^{-1})^*$, we obtain a cartesian square diagram with $q,q^{iv}$ isomorphisms
\beq\la{c3}
\xymatrix{
H^0(C,D)  \times  J_n(0)  \ar[d]_{{\rm Id} \times g_n(0) } \ar[r]^{\hskip 14mm q^{iv}}_{\hskip 13mm \sim} & J_n \ar[d]^{g_n} \\
H^0(C,D) \times  A_n(0)   \ar[r]^{\hskip 13mm q}_{\hskip 13mm \sim} & A_n.
}
\eeq

\subsection{$\delta$-regularity of  Prym over the elliptic locus}\la{9a}$\;$

Recall that  the elliptic locus $A_n^{\rm ell} \subseteq A_n$ is the  locus of characteristics $a \in A_n$
yielding geometrically  integral spectral curves $\m{C}_a.$  We denote by $A_n^{\rm ell} (0)$ and by $\check{A}^{\rm ell}_n$
the restriction of the elliptic locus to $A_n(0)=\check{A}_n.$
Recall   Definition \ref{delrg} ($\delta$-regularity).

\begin{pr}\la{1q2}
The group scheme $J_n(0)/A_n(0)$ is $\delta$-regular over $A_n^{\rm ell}(0)$.
The group scheme $\check{J}_n/\check{A}_n$ is $\delta$-regular over $\check{A}_n^{\rm ell}$, i.e.  if
 $a \in \check{A}_n^{\rm ell}$, then 
\beq\la{98bis}
d_a^{\rm ab}(\check{J}_n) \geq d_{\check{h}_n} - d_{\check{A}_n} + d_a \quad \mbox{\rm ($SL_n$ {\bf $\delta$-regularity inequality})}.
\eeq
\end{pr}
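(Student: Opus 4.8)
The plan is to deduce both assertions from the $GL_n$ $\delta$-regularity inequality (\ref{98}) of Theorem \ref{f4}, transferring it first to the traceless group scheme $J_n(0)/A_n(0)$ via the product structure of \S\ref{lm1r}, and then to the Prym group scheme $\check{J}_n/\check{A}_n$ via the identification (\ref{lk}) of affine parts from Proposition \ref{azz=}. Once this is in place, the displayed inequality (\ref{98bis}) will be nothing more than the reformulation of $\delta$-regularity furnished by Lemma \ref{gdr}, equation (\ref{r4x}), applied to $\check{J}_n/\check{A}_n$, whose fibers have dimension $d_{\check{h}_n}$ (Proposition \ref{kh}).

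First I would treat $J_n(0)/A_n(0)$. Using the cartesian diagram (\ref{c3}), the isomorphism $q^{iv}$ identifies, for each $(\s,u_\bullet)\in H^0(C,D)\times A_n(0)$, the fiber $J_n(0)_{u_\bullet}$ with the fiber $J_{n,q(\s,u_\bullet)}$; being an isomorphism of commutative algebraic groups over the residue field, it respects the Chevalley devissage (\ref{cbr}), so the function $\delta$ of $J_n/A_n$ is constant in the $H^0(C,D)$-direction and under $q$ the $\delta$-locus $S_\delta(J_n/A_n)$ corresponds to $H^0(C,D)\times S_\delta(J_n(0)/A_n(0))$. I would then invoke the translation isomorphism $q'''$ of (\ref{c2}), which carries each spectral curve $\m{C}_{0,u_\bullet}$ isomorphically onto $\m{C}_{q(\s,u_\bullet)}$ and hence preserves geometric integrality, to conclude that $q$ identifies $A_n^{\rm ell}$ with $H^0(C,D)\times A_n^{\rm ell}(0)$. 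Since ${\rm codim}_{H^0(C,D)\times X}(H^0(C,D)\times Z)={\rm codim}_X(Z)$ for every closed $Z\subseteq X$, the codimensions of the $\delta$-loci of $J_n(0)/A_n(0)$ inside $A_n^{\rm ell}(0)$ match those of $J_n/A_n$ inside $A_n^{\rm ell}$; as the latter fibration is $\delta$-regular over $A_n^{\rm ell}$ by Theorem \ref{f4}, so is $J_n(0)/A_n(0)$ over $A_n^{\rm ell}(0)$.

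Next I would pass to $\check{J}_n/\check{A}_n$, recalling that $A_n^{\rm ell}(0)=\check{A}_n^{\rm ell}$ inside $A_n(0)=\check{A}_n$. By the identification (\ref{lk}) of Proposition \ref{azz=}, one has $\check{J}_{n,a}^{\rm aff}=J_{n,a}^{\rm aff}$ for every geometric point $a$ of $\check{A}_n$, whence $\delta_a(\check{J}_n)=d_a^{\rm aff}(\check{J}_n)=d_a^{\rm aff}(J_n(0))=\delta_a(J_n(0))$. Thus $\check{J}_n/\check{A}_n$ and $J_n(0)/A_n(0)$ have literally the same $\delta$-loci $S_\delta$ inside $\check{A}_n$, and since $\delta$-regularity (Definition \ref{delrg}) is a statement about the codimensions of these loci alone, the $\delta$-regularity of $J_n(0)/A_n(0)$ over $A_n^{\rm ell}(0)$ established above yields at once the $\delta$-regularity of $\check{J}_n/\check{A}_n$ over $\check{A}_n^{\rm ell}$; the inequality (\ref{98bis}) then follows from Lemma \ref{gdr}, equation (\ref{r4x}).

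The main obstacle is not any computation but the two structural inputs that make the transfer possible: that restricting from $J_n$ to the traceless $J_n(0)$ alters the $\delta$-loci only by the harmless product factor $H^0(C,D)$, and that passing from $J_n(0)$ to the Prym scheme $\check{J}_n$ leaves the affine parts of the fibers, hence the $\delta$-loci, completely unchanged. These rest respectively on the product structures (\ref{c2})--(\ref{c3}) and on the equality of affine parts (\ref{lk}) from Proposition \ref{azz=}, both already established; granting them, the propagation of $\delta$-regularity is purely formal.
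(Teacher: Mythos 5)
Your proposal is correct and follows essentially the same route as the paper's own proof: both rest on the identification $\check{S}_\delta = S_\delta \cap A_n(0) = S_\delta(0)$ furnished by the equality of affine parts (\ref{lk}) of Proposition \ref{azz=}, and on the product structure (\ref{c3}) giving $q^{-1}(S_\delta) = H^0(C,D)\times S_\delta(0)$, so that the $GL_n$ $\delta$-regularity of Theorem \ref{f4} propagates without loss of codimension. Your explicit remarks that the isomorphism $q^{iv}$ respects the Chevalley devissage and that the translation $q'''$ of (\ref{c2}) preserves the elliptic locus merely make transparent two points the paper leaves implicit.
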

\begin{proof} 
Consider  the locally closed  ``strata" with invariant $d_a^{\rm aff}(-)= \delta$: 
\[
S_\delta:=S_\delta (J_n/A_n) \subseteq 
A_n, \quad 
S_\delta (0):=S_\delta (J_n(0)/A_n(0)) \subseteq 
A_n(0)  \quad
\check{S}_\delta:=S_\delta (\check{J}_n/\check{A}_n) \subseteq 
\check{A}_n.
\]
 By Proposition \ref{azz=}.(\ref{lk}), we have that $\check{S}_\delta =  S_\delta \cap A_n(0)= S_\delta (0)$.
 It follows that the two conclusions of the proposition are equivalent to each other, and  that it is enough to 
 prove the codimension assertion for $S_\delta(0)$.
 
 \n
By  Lemma \ref{gdr}, 
since we already know that ${\rm codim}_{A_n} (S_\delta) \geq \delta,$ for every $\delta \geq 0$ (see 
Lemma \ref{gdr}.(\ref{r4x}) and the Severi inequality in the
 proof of Theorem \ref{f4}),
 we need to make sure that intersecting with $A_n(0)$ does not spoil codimensions. This follows from (\ref{c3}),
 for it implies that
 \beq\la{efb}
 q^{-1} (S_\delta) = H^0(C,D) \times S_\delta (0),
 \eeq
 so that the codimensions of $S_\delta$ in $A_n=H^0(C,D) \times A_n(0)$, and  of
 $S_\delta (0)$ in $A_n(0),$ coincide.
\end{proof}

\subsection{The norm morphism $N_p^{\rm ab}$}\la{tnmq} $\;$

Fix a geometric point $a$ of $\check{A}_n.$
Recall the diagram (\ref{gh})  of finite morphisms of curves and let us focus on $\xi, p, \w{p}$.
We have the surjection  (\ref{q0}) $\xi^*:J_{n,a}= {\rm Pic}^0(\m{C}_a=:\sum_k m_k \Gamma_k) \to {\rm Pic}^0(\w{\m{C}_{a,red}}= \coprod_k \w{\Gamma_k})$. Keeping in mind the Chevalley devissage, we have the following commutative diagram of short exact sequences
completing the r.h.s. square in  (\ref{bdgr}) (recall that $C_a:= C\otimes k(a)$)
\beq\la{fgr}
\xymatrix{
0 \ar[r] &  \check{J}_{n,a}  \ar[r] \ar@{->>}[dd] & J_{n,a} \ar[r] \ar@{->>}[dd]^{\xi^*}  
\ar@/^4pc/[drrr]_{N_p} & J_{n,a}/\check{J}_{n,a} \ar[r] \ar@{->>}[dd]^= \ar[rrd] & 0 & \\
&&&&&  {\rm Pic}^0(C_a)  \ar@/^/[lllu]_{p^*} \ar@/_/[llld]^{\w{p}^*} \\
0 \ar[r] &  \check{J}^{\rm ab}_{n,a}  \ar[r]  & J^{\rm ab}_{n,a} \ar[r]    \ar@/_4pc/[urrr]^{N_p^{\rm ab}} & J^{\rm ab}_{n,a} /\check{J}^{\rm ab}_{n,a} \ar[r]
\ar[rru]  & 0, &
}
\eeq
where $N_p^{\rm ab}$ is the arrow induced by $N_p$, in view of the fact that,  since  $N_p$ 
has  target an abelian variety, it must be  trivial when restricted to  the connected and affine $\ke {\, \xi^*}=J_{n,a}^{\rm aff} \subseteq  J_{n,a}$.

The arrow $N_p^{\rm ab}$ is {\em not} the norm $N_{\w{p}}$ associated with the morphism $\w{p}$. In fact, we have the following lemma.

\begin{lm}\la{ha-pa} For every $L \in J_{n,a}$, we have 
\beq\la{eq01z}
N_p^{\rm ab} (\xi^* L) = \bigotimes_k N_{\w{p}_k} ( \xi_k^*  L)^{\otimes m_k}, 
\eeq
\beq\la{eq02z}
N_{\w{p}} (\xi^*L) =  \bigotimes_k N_{\w{p}_k} ( \xi_k^* L).
\eeq
\end{lm}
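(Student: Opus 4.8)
The plan is to dispatch the elementary identity (\ref{eq02z}) directly, and then to obtain (\ref{eq01z}) by the same component-by-component devissage of the non-reduced spectral curve that underlies the proof of Theorem \ref{poltmd}, now applied to the determinant of cohomology rather than to the Weil pairing. For (\ref{eq02z}) the point is simply that the source $\w{\m{C}_{a,{\rm red}}}=\coprod_k \w{\Gamma_k}$ is a disjoint union, so $\w{p}_*=\bigoplus_k (\w{p}_k)_*$ and hence $\det(\w{p}_* \mathcal{F})=\bigotimes_k \det((\w{p}_k)_*\mathcal{F}_{|\w{\Gamma_k}})$ for every coherent $\mathcal{F}$. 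Applying this to $\mathcal{F}=\xi^* L$, whose restriction to $\w{\Gamma_k}$ is $\xi_k^* L$, and to $\mathcal{F}=\m{O}$, and forming the ratio that defines the norm, yields (\ref{eq02z}) at once.

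For (\ref{eq01z}) I would argue in three steps. First, by the construction of $N_p^{\rm ab}$ in (\ref{fgr}) the norm $N_p$ factors through the surjection $\xi^*\colon J_{n,a}\to J_{n,a}^{\rm ab}$, so $N_p^{\rm ab}(\xi^* L)=N_p(L)$ and it suffices to prove $N_p(L)=\bigotimes_k N_{\w{p}_k}(\xi_k^* L)^{\otimes m_k}$ in ${\rm Pic}(C_a)$. Second, writing $N_p(L)=\det(p_* L)\otimes\det(p_*\m{O}_{\m{C}_a})^{-1}$, I peel off the reduced components one at a time by iterating the short exact sequences (\ref{rfty}) down to a single reduced component, exactly as in the proof of Theorem \ref{poltmd}, but now tensored by $L$. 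At each step the sub-object is the line bundle $\xi_{4,k}^* L\otimes\m{O}_{\Gamma_k}(-\Gamma_k)$ (or $\xi_{4,k}^* L$ at the base step) on a single reduced $\Gamma_k$; by the multiplicativity of the determinant of cohomology and the projection formula Fact \ref{norm}.(2) applied to the finite flat spectral cover $p''_k\colon \Gamma_k\to C_a$, the contribution of this layer to the norm ratio is precisely $N_{p''_k}(\xi_{4,k}^* L)$, the twisting bundle $\m{O}_{\Gamma_k}(-\Gamma_k)$ cancelling between the $L$-twisted and the untwisted filtration. Since the number of layers supported generically on $\Gamma_k$ equals the length $m_k$ of $\m{O}_{\m{C}_a}$ at the generic point $\eta_k$ of $\Gamma_k$, summing the layers gives $N_p(L)=\bigotimes_k N_{p''_k}(\xi_{4,k}^* L)^{\otimes m_k}$.

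Third, I pass from $\Gamma_k$ to its normalization $\w{\Gamma_k}$. Tensoring the normalization sequence (\ref{rf}), whose cokernel $\Sigma$ is supported at finitely many points, by the line bundle $\xi_{4,k}^* L$ and pushing forward to $C_a$, the point-supported term contributes to $\det((\w{p}_k)_*\xi_k^* L)$ a factor that is independent of the twisting line bundle: this is the exact analogue of the $P$-triviality of $\Sigma$ used for Theorem \ref{poltmd}. Hence it cancels in the norm ratio and gives $N_{p''_k}(\xi_{4,k}^* L)=N_{\w{p}_k}(\xi_k^* L)$; substituting into the formula of the previous step yields (\ref{eq01z}).

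The hard part will be the bookkeeping in the second step: one must verify that every graded piece of the $L$-twisted filtration is genuinely a line bundle on the reduced $\Gamma_k$ (which holds because each $\Gamma_k$ is a Cartier divisor on the nonsingular surface $V(D)\otimes k(a)$) and that Fact \ref{norm}.(2) yields $N_{p''_k}(\xi_{4,k}^* L)$ with no residual correction term, so that exactly the multiplicities $m_k$, and nothing else, survive in the final product. Everything else is a transcription of the determinant-of-cohomology formalism already in place for Theorem \ref{poltmd}.
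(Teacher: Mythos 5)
Your proposal is correct, and it reaches the lemma by a genuinely more self-contained route than the paper. Both arguments begin identically: $N_p^{\rm ab}(\xi^*L)=N_p(L)$ by the construction of $N_p^{\rm ab}$ in (\ref{fgr}), and both then descend along the tower of curves in diagram (\ref{gh}). But where the paper disposes of the three descent steps by a chain of citations to \ci[Lemmas 3.5, 3.6 and 3.4]{ha-pa} --- separation of the components via $\xi_1$, reduction of the multiplicities via $\xi_2$ (the source of the exponents $m_k$), and invariance under normalization via $\xi_3$ --- you re-prove these facts from scratch out of the determinant definition (\ref{nm}) of the norm, by running the devissage sequences (\ref{rfty}), (\ref{rft}) and (\ref{rf}) from the proof of Theorem \ref{poltmd}, now tensored with $L$. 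Your layer count ($m_k$ line-bundle layers supported on each reduced $\Gamma_k$, each contributing exactly $N_{p''_k}(\xi_{4,k}^*L)$ by Fact \ref{norm}.(2), since any line-bundle twist on $\Gamma_k$ --- whatever its precise form --- cancels between the $L$-twisted and the untwisted filtrations) is precisely the content of \ci[Lemma 3.6]{ha-pa}, and your $\Sigma$-cancellation, using $\Sigma\otimes(\mbox{line bundle})\cong\Sigma$ for the punctual $\Sigma$, re-derives \ci[Lemma 3.4]{ha-pa}; your first step for (\ref{eq01z}) in effect merges the paper's second and third identities into one devissage. What your approach buys is a proof independent of \ci{ha-pa} and visibly parallel to the polarizability argument; what the paper's citation chain buys is brevity. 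One terminological caveat: the norm (\ref{nm}) is defined via the determinant of the pushforward along the finite morphism to $C_a$ --- a line bundle on the curve $C_a$ --- not via the determinant of cohomology $\det (Rp_*(-))$ over the base used in Theorem \ref{poltmd}; your argument is unaffected, because the covers are finite, so pushforward is exact and $\det$ of the pushforward is multiplicative on short exact sequences of sheaves, which is all you use, but the phrase ``determinant of cohomology'' should be adjusted. Your direct disjoint-union argument for (\ref{eq02z}) is also fine; the paper instead runs the same citation chain minus \ci[Lemma 3.6]{ha-pa}.
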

\begin{proof}
Again, recall diagram (\ref{gh}).
We have the following chain of identities:
\[
N_p^{\rm ab} (\xi^* L) = N_p (L)  = \bigotimes_k N_{p'_k} (\xi^*_{1,k} L) 
= \bigotimes_k N_{p''_k} ( \xi_{2,k}^* \xi_{1,k}^* L)^{\otimes m_k}
= \bigotimes_k N_{\w{p}_k} ( \xi^*  L)^{\otimes m_k},
\]
where:  the first  identity is by the definition of $N_p^{\rm ab},$ for   $N_p$ has descended
via the surjective  $\xi^*: J_{n,a} \to J^{\rm ab}_{n,a}$, which has $\ke {\, \xi^*}= J^{\rm aff}_{n,a};$
the  second identity  follows from \ci[Lemma 3.5]{ha-pa}, applied to the morphisms $\xi_{1,k}$, by keeping in mind
that the norm from a disjoint union is the tensor product of the norms from the individual connected components;
the  third  identity  follows from \ci[Lemma 3.6]{ha-pa},  applied to the morphisms $\xi_{2,k}$;
the  fourth identity  follows from \ci[Lemma 3.4]{ha-pa},  applied to the morphisms $\xi_{3,k}$.
This proves (\ref{eq01z}). 

\n
The identity (\ref{eq02z}) can be proved in the same way (without recourse
to [loc.cit., Lemma 3.6].
\end{proof}
   
\subsection{The Tate module of Prym  is polarizable}\la{itpz}$\;$
   
\begin{lm}\la{lzt}$\;$
Let $a$ be any geometric point of  $\check{A}_n$.   Let $\w{p}_a$, etc. be the corresponding morphisms
in {\rm  (\ref{gh})}.
   \ben
   \item
    $T_{\oql}(\w{p}_a^*)$ and $T_{\oql}(N_{p_a}^{\rm ab})$ are adjoint w.r.t. the bilinear forms $TW^{\rm ab}_a$ and $TW_{C_a}$.

    \item
    $\ke  \,(T_{\oql}(N_{p_a}^{\rm ab}) ) = T_{\oql} (\check{J}_a^{\rm ab})$.
        
    \item
    $N_{p_a}^{\rm ab} \circ \w{p}_a^*=  n \, {\rm Id}_{{\rm Pic}^0(C_a)}.$
    
   \een 
    
    \end{lm}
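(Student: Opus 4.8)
The plan is to handle the three assertions in order of increasing subtlety. Throughout I write $u = (u_k)_k$ for an element of $T_{\oql}(J^{\rm ab}_{n,a})$ under the identification $T_{\oql}(J^{\rm ab}_{n,a}) = \bigoplus_k T_{\oql}({\rm Pic}^0(\w{\Gamma_k}))$ coming from $J^{\rm ab}_{n,a} = \prod_k {\rm Pic}^0(\w{\Gamma_k})$, and $w$ for an element of $T_{\oql}({\rm Pic}^0(C_a))$. Assertion (3) is purely formal: since $\w{p}_a = p_a \circ \xi_a$ we have $\w{p}_a^* = \xi_a^* \circ p_a^*$, and by the very definition of $N_{p_a}^{\rm ab}$ as the descent of $N_{p_a}$ along the surjection $\xi_a^*$ (see diagram (\ref{fgr})) we have $N_{p_a}^{\rm ab} \circ \xi_a^* = N_{p_a}$. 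Hence $N_{p_a}^{\rm ab} \circ \w{p}_a^* = N_{p_a} \circ p_a^*$, which by Fact \ref{norm}.(1) is multiplication by $n$ on ${\rm Pic}^0(C_a)$; no passage to Tate modules is needed.

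For (1), the single nontrivial input I would invoke is the classical adjointness, for a finite morphism $f\colon X \to Y$ of nonsingular projective curves, between the pull-back $f^*$ and the norm $N_f$ on Tate modules of Jacobians: under the identification of the Tate-Weil pairing with the cup product on first $\oql$-adic cohomology (\ci[Ch. V, Remark 2.4(f)]{miec}), this adjointness is just the projection formula $f_*(f^*\alpha \cup \beta) = \alpha \cup f_* \beta$ read on $H^1$. Applying this to each $\w{p}_k\colon \w{\Gamma_k} \to C_a$ gives $TW_{C_a}(T_{\oql}(N_{\w{p}_k}) u_k,\, w) = TW_{\w{\Gamma_k}}(u_k,\, T_{\oql}(\w{p}_k^*) w)$. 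I would then assemble these: by the explicit form (\ref{q10}) of the Weil pairing, $TW^{\rm ab}_a(u, T_{\oql}(\w{p}_a^*) w) = \sum_k m_k\, TW_{\w{\Gamma_k}}(u_k,\, T_{\oql}(\w{p}_k^*) w)$, since the $k$-th component of $\w{p}_a^*$ is $\w{p}_k^*$; and by the Tate-module form of (\ref{eq01z}), $T_{\oql}(N_{p_a}^{\rm ab}) u = \sum_k m_k\, T_{\oql}(N_{\w{p}_k}) u_k$. Substituting the per-component adjointness and using bilinearity of $TW_{C_a}$ turns the first expression into $TW_{C_a}(T_{\oql}(N_{p_a}^{\rm ab}) u,\, w)$, which is exactly the desired adjointness.

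For (2), I would read the factorization off diagram (\ref{fgr}): the map $N_{p_a}^{\rm ab}$ is the composite of the quotient $J^{\rm ab}_{n,a} \to J^{\rm ab}_{n,a}/\check{J}^{\rm ab}_{n,a}$ with the isogeny (\ref{hg}) onto ${\rm Pic}^0(C_a)$ provided by Proposition \ref{azz=}. Since the $\oql$-adic Tate module functor turns a short exact sequence of abelian varieties into a short exact sequence, and turns an isogeny into an isomorphism, applying $T_{\oql}(-)$ to the bottom row of (\ref{fgr}) yields an exact sequence $0 \to T_{\oql}(\check{J}^{\rm ab}_{n,a}) \to T_{\oql}(J^{\rm ab}_{n,a}) \to T_{\oql}({\rm Pic}^0(C_a)) \to 0$ in which the last arrow is identified with $T_{\oql}(N_{p_a}^{\rm ab})$. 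Hence $\ke(T_{\oql}(N_{p_a}^{\rm ab})) = T_{\oql}(\check{J}^{\rm ab}_{n,a})$. Alternatively, one could combine (1) with the non-degeneracy of $TW_{C_a}$ to identify this kernel as the $TW^{\rm ab}_a$-orthogonal of the image of $T_{\oql}(\w{p}_a^*)$; but the factorization route is shorter and self-contained.

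I expect the main obstacle to lie in the bookkeeping of (1), specifically in keeping track of where the multiplicities $m_k$ enter. The point is that the norm $N_{p_a}^{\rm ab}$ carries the weights $m_k$ (as in (\ref{eq01z})), whereas the naive norm $N_{\w{p}}$ does not (as in (\ref{eq02z})), so it is genuinely $N_{p_a}^{\rm ab}$---and not $N_{\w{p}}$---that is adjoint to $\w{p}_a^*$ with respect to the weighted pairing $TW^{\rm ab}_a = \sum_k m_k\, TW_{\w{\Gamma_k}}$ of (\ref{q10}); the weight $m_k$ in the pairing and the weight $m_k$ in the norm are exactly what cancel to make the adjointness hold on the nose. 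The only genuinely analytic ingredient, the single-curve adjointness, I would cite rather than reprove.
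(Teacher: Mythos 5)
Your proposal is correct and follows essentially the same route as the paper: part (1) via the classical norm/pull-back adjointness on each $\w{\Gamma_k}$ combined with the weighted pairing (\ref{q10}) and the Tate-module form of (\ref{eq01z}), and part (2) via the bottom row of (\ref{fgr}) and the isogeny (\ref{hg}). The only deviation is in (3), where you shortcut the paper's component-wise computation through Lemma \ref{ha-pa} by using the descent relation $N_{p_a}^{\rm ab}\circ \xi_a^* = N_{p_a}$ together with $N_{p_a}\circ p_a^* = n\,{\rm Id}$ from Fact \ref{norm}.(1); this is a legitimate streamlining resting on the same underlying facts.
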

    
    \begin{proof}
    Recall that we have the spectral cover $\m{C}_a =\sum_k m_k \Gamma_k \to C_a=C\otimes k(a).$
    We start with (1).
    For every $\w{\gamma} = \sum_k \w{\gamma}_k \in T_{\oql} (J^{\rm ab}_{n,a})=
    \oplus_k T_{\oql} ( {\rm Pic}^0(\w{\Gamma}_k))$, and for every $c \in T_{\oql} ({\rm Pic}^0(C_a))$, we have  that
    \[
    TW^{\rm ab} \left(\w{\gamma}, T_{\oql}(\w{p}^*) (c)\right)=
    TW^{\rm ab} \left(\sum_k \w{\gamma}_k, \sum_k  T_{\oql}(\w{p}_k^*) (c) \right)
    = \sum_k m_k\,TW_{\w{\Gamma}_k} \left(  \w{\gamma}_k, T_{\oql}(\w{p}^*_k) (c)  \right)=
    \]
    \[
    = \sum_k  m_k\, TW_C \left(  T_{\oql} (N_{\w{p}_k}) (\w{\gamma_k}), c  \right) 
    = TW_C \left(   \sum_k  m_k\, T_{\oql} (N_{\w{p}_k}) (\w{\gamma_k}), c  \right) =
    \]
    \[
    = TW_C \left(T_{\oql} (N_p^{\rm ab}) (\w{\gamma}), c\right),
    \]
    where: the first identity is simply by consideration of components; the second identity follows form the fact that
    $TW^{\rm ab}$ is obtained from $TW,$ which is the direct sum of the individual $TW_{\w{p}_k}$, weighted by $m_k$ (see the end of the proof
    of Proposition \ref{poltmd}); the third identity is the classical  adjunction relation  
    (cf. \ci[p.186, equation I]{Mu} and \ci[Cor. 11.4.2, especially p.331, equation (2)]{Bi-La}) between norm and pull-back
    for the morphism $\w{p}_k: \w{\Gamma}_k \to C_a;$ the last equality is obtained by applying
    the functor $T_{\oql}$ to the identity (\ref{eq01z}), and (1) is proved.
    
    \n
  We prove (2).  The lower line in (\ref{fgr}) yields, in view of the isogeny (\ref{hg}), the short exact sequence
    \[
    \xymatrix{
    0 \ar[r]  &T_{\oql} (\check{J}^{\rm ab}_{n,a})  \ar[r] & T_{\oql} (J^{\rm ab}_{n,a})
    \ar[r] & T_{\oql} (J^{\rm ab}_{n,a}/(\check{J}^{\rm ab}_{n,a})
    \cong T_{\oql}({\rm Pic}^0(C_a)) \ar[r] & 0,
    }
    \]
     so that the resulting arrow  $T_{\oql} (J^{\rm ab}_{n,a}) \to T_{\oql} (J^{\rm ab}_{n,a}/\check{J}^{\rm ab}_{n,a})$ 
     gets identified with 
     \[
   T_{\oql} 
     (N_p^{\rm ab})  :  
     T_{\oql} ( J^{\rm ab}_{n,a}) \lorw T_{\oql}({\rm Pic}^0(C_a) ).
     \]
     We prove (3).
    Recall the standard identity $N_{\w{p}_k}\circ \w{p}_k^*= n_k {\rm Id},$ 
    and that $n = \sum_k n_k m_k.$
    Then (3) follows from Lemma \ref{ha-pa}:  for every $L \in   {\rm Pic}^0(C_a)$, we have
    \[
    N_p^{\rm ab} ( \w{p}^* L)= N_p^{\rm ab} ( \xi^* p^* L)
    = \bigotimes_k N_{\w{p}_k} (\xi_k^* p_k^* L)^{\otimes m_k}= 
    \]
    \[
    =  \bigotimes_k N_{\w{p}_k} (\w{p}_k^* L)^{\otimes m_k}=
    \bigotimes_k L^{\otimes n_k m_k} = L^{\otimes n}.
    \]
\end{proof}

\begin{tm}\la{slnpol} {\rm ({\bf Polarizability of the  Tate module of Prym})}
The restriction \[\check{TW}: T_{\oql}(\check{J}_n) \otimes T_{\oql}(\check{J}_n) \to \oql (1)\] of the Tate-Weil pairing $TW: T_{\oql}(J_n) \otimes T_{\oql}(J_n) \to \oql (1)$
is a polarization of the Tate module $T_{\oql}(\check{J}_n)$ on $\check{A}_n.$
\end{tm}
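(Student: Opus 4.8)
The plan is to verify, at every geometric point $a$ of $\check{A}_n$, the defining property of a polarization from \S\ref{w2}: that the kernel of $\check{TW}_a$ on $T_{\oql}(\check{J}_{n,a})$ is exactly $T_{\oql}(\check{J}^{\rm aff}_{n,a})$. Since $\check{TW}$ is the restriction of the globally defined alternating pairing $TW$ into $\oql(1)$, it is again a global, alternating, $\oql(1)$-valued morphism, and by the general principle recalled in \S\ref{w2} it is automatically trivial on the affine part and descends to the abelian part; so the genuine content is non-degeneracy after descent. First I would record the easy inclusion: by Proposition \ref{azz=}, equation (\ref{lk}), we have $\check{J}^{\rm aff}_{n,a} = J^{\rm aff}_{n,a}$, hence $T_{\oql}(\check{J}^{\rm aff}_{n,a}) = T_{\oql}(J^{\rm aff}_{n,a})$; since the latter lies in the kernel of the full $TW_a$ by Theorem \ref{poltmd}, it certainly lies in the kernel of the restriction $\check{TW}_a$. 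Thus $\check{TW}_a$ descends to a pairing $\check{TW}^{\rm ab}_a$ on $T_{\oql}(\check{J}^{\rm ab}_{n,a})$, and it remains to prove this descended pairing is non-degenerate.

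For the non-degeneracy I would work inside the non-degenerate symplectic space $(V, TW^{\rm ab}_a)$, where $V := T_{\oql}(J^{\rm ab}_{n,a})$ is non-degenerate by Theorem \ref{poltmd}. By Lemma \ref{lzt}(2), the subspace $W := T_{\oql}(\check{J}^{\rm ab}_{n,a})$ equals $\ke(T_{\oql}(N_p^{\rm ab}))$. Set $U := T_{\oql}({\rm Pic}^0(C_a))$ and $P := \im(T_{\oql}(\w{p}^*_a)) \subseteq V$. The algebraic engine is Lemma \ref{lzt}(3): applying $T_{\oql}(-)$ to $N_p^{\rm ab} \circ \w{p}^*_a = n\,{\rm Id}$ yields $T_{\oql}(N_p^{\rm ab}) \circ T_{\oql}(\w{p}^*_a) = n\,{\rm Id}_U$, and since $n$ is invertible in $\oql$ this exhibits $\tfrac{1}{n}\,T_{\oql}(\w{p}^*_a)$ as a section of $T_{\oql}(N_p^{\rm ab})$. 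Hence $T_{\oql}(\w{p}^*_a)$ is injective, $T_{\oql}(N_p^{\rm ab})$ is surjective, and we obtain a direct sum decomposition $V = P \oplus W$.

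Next I would invoke the adjunction Lemma \ref{lzt}(1) to show that $P$ and $W$ are orthogonal for $TW^{\rm ab}_a$: for $w \in W = \ke(T_{\oql}(N_p^{\rm ab}))$ and any $c \in U$,
\[
TW^{\rm ab}_a\bigl(w, T_{\oql}(\w{p}^*_a)(c)\bigr) = TW_{C_a}\bigl(T_{\oql}(N_p^{\rm ab})(w), c\bigr) = 0,
\]
so that $W \perp P$, i.e. $P \subseteq W^{\perp}$. Combining this with the splitting $V = P \oplus W$ and the non-degeneracy of $TW^{\rm ab}_a$ on $V$ (which forces $\dim W^{\perp} = \dim V - \dim W = \dim P$) gives $P = W^{\perp}$, whence $V = W \oplus W^{\perp}$; equivalently $W \cap W^{\perp} = 0$, which is precisely the assertion that the restriction $\check{TW}^{\rm ab}_a = TW^{\rm ab}_a|_W$ is non-degenerate. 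This yields the reverse inclusion $\ke(\check{TW}_a) \subseteq T_{\oql}(\check{J}^{\rm aff}_{n,a})$ and completes the verification that $\check{TW}$ is a polarization of $T_{\oql}(\check{J}_n)$.

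As for difficulty, all the delicate input has been front-loaded into Lemma \ref{lzt}, above all the adjointness of pull-back and norm at the level of Tate modules in part (1), which itself rests on the explicit weighted form (\ref{q10}) of $TW^{\rm ab}$ and on the explicit norm formula (\ref{eq01z}). Granting those, the present theorem is a formal exercise in symplectic linear algebra over $\oql$, and the only step demanding a little care is identifying $W^{\perp}$ (computed inside $V$) with $P$, which is exactly where the invertibility of $n$ and the resulting direct sum decomposition are indispensable.
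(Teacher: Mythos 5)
Your proof is correct and follows essentially the same route as the paper's: both verify triviality on the affine part via Proposition \ref{azz=} and the proof of Theorem \ref{poltmd}, then establish non-degeneracy on the abelian part using Lemma \ref{lzt}(3) for the splitting $T_{\oql}(J^{\rm ab}_{n,a}) = \ke(T_{\oql}(N_p^{\rm ab})) \oplus \im(T_{\oql}(\w{p}^*))$, Lemma \ref{lzt}(1) for its $TW^{\rm ab}$-orthogonality, and Lemma \ref{lzt}(2) to identify the kernel with $T_{\oql}(\check{J}^{\rm ab}_{n,a})$. Your only variations --- making explicit the section $\tfrac{1}{n}T_{\oql}(\w{p}^*)$ behind the splitting and the identification of $\im(T_{\oql}(\w{p}^*))$ with the orthogonal complement --- merely spell out linear algebra the paper leaves implicit.
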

\begin{proof}
We fix an arbitrary geometric point $a$ of  $ \check{A}_n.$ By Proposition \ref{azz=}, we have that $\check{J}^{\rm aff}_{n,a} = 
J^{\rm aff}_{n,a}.$ We have already verified that $TW$ is trivial on the ``affine part"
$J_{n,a}^{\rm aff}= \check{J}_{n,a}^{\rm aff}$  (see the proof of  Proposition
\ref{poltmd} and (\ref{lk})).
It follows that $TW$ is trivial on $T_{\oql} (\check{J}_{n,a}^{\rm aff})$. We need to show that
the descended non-degenerate $TW^{\rm ab}$  (cf. Theorem \ref{poltmd}) on $T_{\oql} (J_{n,a}^{\rm ab})$ stays 
non-degenerate on $T_{\oql} (\check{J}_{n,a}^{\rm ab})$.

\n
By Lemma \ref{lzt}.(3), we have that 
\[
T_{\oql} (J^{\rm ab}_{n,a}) =  \ke{\,( T_{\oql}(N_p^{\rm ab}))} \oplus \im{\, (\w{p}^*)}.
\]
By Lemma \ref{lzt}.(1), the direct sum decomposition is orthogonal w.r.t. $TW^{\rm ab}$.

\n
By Lemma \ref{lzt}.(2), we may re-write the  orthogonal direct sum decomposition above as follows 
\[
T_{\oql} (J^{\rm ab}_{n,a}) =  T_{\oql} (\check{J}^{\rm ab}_{n,a}) \oplus^{\perp_{TW^{\rm ab}}}
\im{\, (\w{p}^*)},\]
so that the non-degenerate form  $TW^{\rm ab}$ restricts to a non-degenerate form on $T_{\oql} (\check{J}^{\rm ab}_{n,a})$.
\end{proof}

\subsection{Recap for the   $SL_n$ weak abelian fibration}\la{33}$\;$

Theorem \ref{f4} tells us that in the $D$-twisted, $GL_n$ case, the triple $(M_n, A_n,J_n)$
is a weak abelian fibration that is $\delta$-regular over the elliptic locus.   

Proposition \ref{1q2},
implies that the analogous conclusion holds  for $(M_n(0), A_n(0), J_n(0)$. In  fact, the polarizability of the Tate module
is automatic when restricting from $A_n$ to $A_n(0)$: for $J_n(0) = {J_n}_{|A_n(0)}$, and the Tate module is the restriction of the Tate module. Similarly, the stabilizers are the same and they are thus affine. Even though the Chevalley devissages
are un-effected when  passing from $A_n$ to $A_n(0)$, it is not a priori evident that
the $\delta$-regularity should be preserved (intersecting may spoil codimensions), and this is precisely  what
Proposition
\ref{1q2} ensures. 

The $D$-twisted  $SL_n$ case, i.e. $(\check{M}_n, \check{A}_n, \check{J}_n)$,   is slightly trickier because, in addition
to the discussion in the previous paragraph,
 the polarizability Theorem \ref{slnpol}
for  the Tate module $T_{\oql}(\check{J}_n)$  did not follow immediately from the $GL_n$ analogous
Theorem \ref{poltmd}.

We record for future use the  following result.

\begin{tm}\la{95}
The triple  $(\check{M}_n, \check{A}_n,\check{J}_n)$ is a weak abelian fibration  which is  $\delta$-regular over 
$\check{A}_n^{\rm ell}$. In particular,
\ben
\item
If $a\in {\socle}({R{\check{h}_n}}_* {\oql})$, then 
\beq\la{99biz}
d_{\check{h}_n} - d_{\check{A}_n} + d_{\check{a}} \geq d_{\check{a}}^{\rm ab}(\check{J}_n)  \qquad \mbox{\rm ({\bf Ng\^o support inequality})}.
\eeq
\item
If $a\in \check{A}_n^{\rm ell}$, then 
\beq\la{98biz}
d_{\check a}^{\rm ab}(\check{J}_n) \geq d_{\check{h}_n} - d_{\check{A}_n} + d_{\check{a}} \qquad \mbox{\rm ({\bf $\delta$-regularity inequality})}.
\eeq
\een
\end{tm}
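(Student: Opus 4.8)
The plan is to read off this statement from the ingredients assembled in the preceding subsections: each of the three defining properties of a weak abelian fibration (Definition \ref{waf}) has, in effect, already been checked for the Prym data $(\check{M}_n, \check{A}_n, \check{J}_n)$, and the $\delta$-regularity over $\check{A}_n^{\rm ell}$ is exactly the content of Proposition \ref{1q2}. Recall from \S\ref{jnc} that $\check{J}_n \to \check{A}_n$ is a smooth commutative group scheme with geometrically connected fibers over the irreducible base $\check{A}_n$, acting on $\check{M}_n/\check{A}_n$, and that $\check{h}_n$ is projective with nonsingular source and target (Proposition \ref{kh}); so the structural hypotheses of \S\ref{w2} are in place.

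First I would match the relative dimensions. The morphism $\check{h}_n$ has pure relative dimension $d_{\check{h}_n} = d_{h_n} - g$ by (\ref{wdv}); meanwhile, as already noted in the proof of Lemma \ref{azz}, the surjectivity of the norm map $N_{p_a}: J_{n,a} \to {\rm Pic}^0(C_a)$ onto a $g$-dimensional abelian variety gives $\dim \check{J}_{n,a} = \dim J_{n,a} - g$, which equals $d_{h_n} - g$ by (\ref{oim}); hence the structural morphism of $\check{J}_n$ shares the relative dimension of $\check{h}_n$. Next, the affine-stabilizer condition: the stabilizer of any geometric point $m$ of $\check{M}_n$ under the $\check{J}_n$-action is $\check{J}_{n,a} \cap {\rm St}_m$, a closed subgroup scheme of the stabilizer ${\rm St}_m$ for the ambient $J_n$-action, which is affine by Proposition \ref{affstbz}; a closed subgroup scheme of an affine group scheme is affine, so the condition holds. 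The polarizability of $T_{\oql}(\check{J}_n)$ is exactly Theorem \ref{slnpol}. These three checks make $(\check{M}_n, \check{A}_n, \check{J}_n)$ a weak abelian fibration, and Proposition \ref{1q2} promotes it to a $\delta$-regular one over $\check{A}_n^{\rm ell}$.

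It then remains to extract the two displayed inequalities. For (\ref{99biz}) I would apply Ng\^o's support inequality (Theorem \ref{nst}) to the weak abelian fibration just established, using the nonsingularity of $\check{M}_n, \check{A}_n$ and the projectivity of $\check{h}_n$, and rewriting the conclusion in the $d^{\rm ab}$-form as in (\ref{nin}). Inequality (\ref{98biz}) is simply the $SL_n$ $\delta$-regularity inequality (\ref{98bis}) already furnished by Proposition \ref{1q2}, restated here; equivalently, it is the translation through Lemma \ref{gdr}.(\ref{r4x}) of the codimension bound proved there. The point to stress is that no new obstacle arises at this stage: the genuine difficulty was front-loaded into the polarizability Theorem \ref{slnpol} and into the codimension-preservation argument of Proposition \ref{1q2}. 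The only step demanding a moment's care is the equality of relative dimensions, where one must use the \emph{surjectivity} of the norm map --- so that the drop in dimension is exactly $g$ --- rather than the mere inclusion $\check{J}_{n,a} \subseteq J_{n,a}$.
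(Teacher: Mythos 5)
Your proposal is correct and follows essentially the same route as the paper's proof: matching of relative dimensions via the construction of $\check{J}_n$ as (identity components of) the kernel of the smooth surjective norm map, affine stabilizers as closed subgroups of the $GL_n$ stabilizers from Proposition \ref{affstbz}, polarizability from Theorem \ref{slnpol}, then Theorem \ref{nst} for (\ref{99biz}) and Proposition \ref{1q2} for (\ref{98biz}). Your closing observation --- that the exact drop by $g$ in relative dimension needs the surjectivity (indeed smoothness, established in \S\ref{jnc}) of the norm, not just the inclusion $\check{J}_{n,a}\subseteq J_{n,a}$ --- is exactly the point the paper compresses into the phrase ``by the very construction \S\ref{jnc} of $\check{J}_n$.''
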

\begin{proof}
The projective morphism $\check{h}_n: \check{M}_n \to \check{A}_n$ is of pure relative dimension 
$d_{\check{h}_n} = d_{h_n} -g$ (Prop. \ref{kh}).  By (\ref{oim}), the pure relative dimension 
$d_{g_n} =d_{h_n}$. By the very  construction \S\ref{jnc} of $\check{J}_n$,  the pure relative dimension 
of $\check{g}_n: \check{J}_n \to \check{A}_n$ is $d_{\check{g}_n}= d_{g_n}-g.$ It follows that $d_{\check{h}_n}=
d_{\check{g}_n}.$  The stabilizers of the $\check{J}_n$-action are affine because they are  closed subgroups
of the stabilizers of the $J_n$-action, which are affine by virtue of  Proposition \ref{affstbz}.
The Tate module $T_{\oql} (\check{J}_n)$ is polarizable by virtue of Theorem \ref{slnpol}.
We have thus verified that the triple is a weak abelian fibration. In particular, Ng\^o support inequality 
\ref{nst} implies (\ref{99biz}). The $\delta$-regularity assertion is contained in Proposition \ref{1q2}.
\end{proof}


\subsection{Endoscopy and the $SL_n$ socle  over the elliptic locus}\la{zel}$\;$

We employ the notation and results  in \S\ref{vbn}, especially Fact \ref{num}.

According to \ci[Prop. 6.5.1]{ngofl}, we have 
\beq\la{rb6}
\left({R^{2\check{h}_{n}}{{\check h}_n}}_* \oql \right)_{| \check{A}_n^{\rm ell}} \cong
{ \oql}_{\check{A}_n^{\rm ell}} \bigoplus \bigoplus_\Gamma 
{ \oql}_{\check{A}_{n,\Gamma}^{\rm ell}}^{\oplus o_\Gamma -1} \qquad
(\Gamma, \, o_\Gamma  \mbox{ as in  (\ref{e45})}).
\eeq

In view of Theorem \ref{95}, the triple
 $(\check{M}_n, \check{A}_n, \check{J}_n)$ is a weak abelian fibration 
that is $\delta$-regular over $\check{A}_n^{\rm ell}$, so that we may use 
Ng\^o support theorem \ci[Thm. 7.2.1]{ngofl}, to the effect that the supports over the elliptic
locus must also be the supports appearing in (\ref{rb6}), and conclude that
\beq\la{mopz}
{{\socle}} ({R{{\check h}_n}}_* \oql) \cap \check{A}_n^{\rm ell} \, = \, \left\{\eta_{\check{A}_n}\right\} \coprod  \coprod_\Gamma \{ \eta_{\check{A}_{n,\Gamma}}\}.
\eeq

\section{Multi-variable weak abelian fibrations}\la{45}$\;$

While the $SL_n$ support inequality is used in the proof of our main Theorem \ref{maintm} on the $SL_n$ socle, the
$SL_n$ $\delta$-regularity inequality is of no use in that respect.  \S\ref{45} is devoted to 
establish the $\delta$-regularity-type inequality that we need instead, i.e. (\ref{speremb}).
To this end,  \S\ref{qwoz} introduces the   multi-variable $GL_n$ weak abelian fibration 
$(M_{n_\bullet},  A_{n_\bullet}, J_{n_\bullet})$.  \S\ref{qw2}  introduces 
its $m_\bullet$-weighted-traceless counterpart $(M_{n_\bullet m_\bullet}(0),  A_{n_\bullet m_\bullet}(0), 
J_{n_\bullet}(0))$, and   establishes a series  of 
product-decomposition-formulae of the form $H^0(C,D) \times (-)_{n_\bullet m_\bullet} (0)\cong 
(-)_{n_\bullet}$. This construction yields the group scheme
$J_{n_\bullet m_\bullet} (0)/A_{n_\bullet m_\bullet} (0)$ with the useful $\delta$-regularity-type inequality that we need.
Extracting it, as it is done in \S\ref{rt5}, is not a priori completely evident:  one has trivially a $\delta$-regularity-type
inequality
for the multi-variable Jacobi groups scheme $J_{n_\bullet m_\bullet}/A_{n_\bullet m_\bullet}$,  which takes the form of an inequality
for codimensions of $\delta$-loci  in $A_{n_\bullet m_\bullet}$; however,  one needs instead  to control  the codimensions 
of the $\delta$-loci {\em  after} restriction to  the linear subspace $A_{n_\bullet m_\bullet}(0),$ which is not meeting
the $\delta$-loci transversally.

\subsection{The weak abelian fibration $(M_{n_\bullet}, A_{n_\bullet}, J_{n_\bullet})$}\la{qwoz}
$\;$

Let $n_{\bullet}=(n_1, \ldots,  n_s)$ be a finite sequence of positive integers.
Define 
\beq\la{weg}
(M_{n_\bullet}, A_{n_\bullet}, J_{n_\bullet}) := \left(\prod_k M_{n_k}, \prod_k A_{n_k}, \prod_k J_{n_k}\right)
\eeq
\beq\la{weg1}
A_{n_\bullet}^{\rm ell}:= \prod_k A_{n_k}^{\rm ell}.
\eeq
A geometric point of $A_{n_\bullet}^{\rm ell}$ correspond to an ordered $s$-tuple of geometrically integral
spectral curves $(\Gamma_1, \ldots, \Gamma_s)$ of  respective spectral degrees $(n_1, \ldots, n_s)$.

The requirements of Definition \ref{waf} (same pure relative dimensions, affine stabilizers,
polarizability of Tate modules, $\delta$-regularity on the elliptic locus) are met on each factor
separately by virtue of Theorem \ref{f4}. (In verifying $\delta$-regularity, one needs a simple application
of Lemma \ref{gdr}.(2) to each factor:
let $a\in A_{n_\bullet}^{\rm ell}$; let $x_\bullet$ be a closed general point in $\ov{\{a\}}$; let $a_k$ be the projection of $a$ to the $k$-th factor; then: $a_k \in A_{n_k}^{\rm ell}$, $x_k$ is a closed general point of $\ov{\{a_k\}}$, and $\sum_k d_{a_k} \geq d_a$ (because
$\ov{\{a\}} \subseteq \prod_k \ov{\{a_k\}}$); we have $d_a^{\rm ab}(J_{n_\bullet})=d_{x_\bullet}^{\rm ab}(J_{n_\bullet})=
\sum_k d_{x_k}^{\rm ab}(J_{n_k})=\sum_k d_{a_k}^{\rm ab}(J_{n_k}) \geq \sum_k (d_{a_k}(J_{n_k}) - d_{A_{n_k}}+ d_{a_k})
= \sum_k d_{a_k}(J_{n_k})  - d_{A_{n_\bullet}}+ \sum_k d_{a_k} \geq   \sum_k d_{a_k}(J_{n_k})  - d_{A_{n_\bullet}}+ d_{a}=
\sum_k d_{x_k}(J_{n_k})  - d_{A_{n_\bullet}}+ d_{a}= d_{a}(J_{n_\bullet})  - d_{A_{n_\bullet}}+ d_{a}$.)
 It follows immediately that they are met on the product, so that
 (\ref{weg}) is a
weak abelian fibration which is $\delta$-regular over $A_{n_\bullet}^{\rm ell}.$

\subsection{Stratification by type of the $GL_n$ Hitchin base $A_n$}\la{stry}$\;$

Let $n \in \zed^{\geq 1}$ and let $s \in \zed^{\geq 1}$ with $ 1 \leq s \leq n.$ We consider the set  $NM (s)$ of pairs
$(n_\bullet, m_\bullet)$ subject to the following requirements:
1) $n_1 \geq \ldots \geq n_s$; 2)  $m_k \geq m_{k+1}$ whenever $n_k=n_{k+1}$; 3) $\sum_{k=1}^s m_k n_k =n$.
There is the  partition  of the integral variety
\beq\la{part}
A_n = \coprod_{1\leq s \leq n} \,\coprod_{(n_\bullet, m_\bullet) \in MN(s)} S_{n_\bullet m_\bullet}
\eeq
into the locally closed integral subvarieties 
\beq\la{str}
S_{n_\bullet m_\bullet}:= \left\{a \in A_n \, |\, \cu{C}{\ov{a}} = \sum_{k=1}^s m_k \cu{C}{k,\ov{a}}\right\}
\subseteq A_n,
\eeq 
where
$\ov{a} \to a$ is given by an algebraic closure $k(a) \subseteq \ov{k(a)}$, and 
each spectral curve $\cu{C}{k,\ov{a}}$  is irreducible of spectral curve degree $n_k$. The closure 
 $\ov{S_{n_\bullet, m_\bullet}} \subseteq A_n$ is the image of the   finite morphism
 (\ci[\S9]{ch-la}) 
\beq\la{lam}
\lambda_{m_\bullet n_{\bullet}}: A_{n_\bullet} \to A_n,   \qquad \im{\, ( \lambda_{n_\bullet m_\bullet})} = 
\ov{S_{n_\bullet m_\bullet}} \subseteq A_n,
\eeq
  which on closed points is defined as follows:
 $(a_1, \ldots , a_s) \mapsto a$, where we view $a_k$ as a characteristic polynomial $P_{a_k}$ of degree $n_k$, we consider
 the degree $n$ polynomial $\prod_{k=1}^s P^{m_k}_{a_k}$,  and we  take $a$ to be the corresponding closed point on $A_n.$
 The stratum $S_{n_\bullet m_\bullet}$ is the image of a suitable Zariski dense open subvariety inside the Zariski dense
 open subvariety $\prod_{k=1}^s A_{n_k}^{\rm ell} \subseteq  A_{n_\bullet}$. Given a point $a \in A_n$, we have $a \in S_{n_\bullet m_\bullet}$
 for a unique triple $(s, (n_\bullet, m_\bullet))$, with  $1\leq s\leq n$ the number of irreducible components of $\cu{C}{\ov{a}}$, 
and with  $(n_\bullet, m_\bullet)  \in  NM(s) $, which we call the type of $a \in A_n$.   Since the spectral curve $\cu{C}{a}$ may have a strictly smaller number of components than $\cu{C}{\ov a}$, the type  of $a$ is observed on $\cu{C}{\overline{a}}$.  

Geometrically, we may think of the morphism $\lambda_{n_\bullet m_\bullet}$ as sending an ordered $s$-tuple
of integral curves $(\Gamma_1, \ldots, \Gamma_s)$,  to the spectral curve denoted (\S\ref{spcv}) by $\sum_k m_k \Gamma_k$. As it is already clear in the case $s=2$,
with 
$(n_1,n_2; m_1,m_2) = (1,1;1,1),$ in general, the finite morphisms $\lambda_{n_\bullet m_\bullet}$ are not birational.

The morphisms (\ref{lam}) are introduced in  \ci[\S9]{ch-la} in order to exploit the  $GL_n$ $\delta$-regularity
inequalities for each $J_{n_k}^{\rm ell}/A_{n_k}^{\rm ell}$, $k=1,\ldots, s$ (however, see  Remark \ref{nt}).

The resulting inequalities are of no use to us for the $SL_n$ case: they are too weak. One may be tempted to replace them by taking the multi-variable counterpart to 
the $SL_n$ $\delta$-regularity inequality  (\ref{98biz}). As it turns out, these  $SL_n$ inequalities are also of no use to us towards the proof of Theorem \ref{maintm} on the  $SL_n$ socle:
they not relevant in the proof  given in \S\ref{pmaintm} of Theorem \ref{maintm}
(the $SL_n$ support inequality (\ref{99biz}) plays a crucial role, though).  

The  multi-variable $\delta$-regularity inequalities that we need
for the proof of Theorem \ref{maintm}  on the $SL_n$ socle are given by Corollary \ref{lm5b}.(\ref{speremb}), and are
to be extracted from the constructions of the next  \S\ref{qw2}.

\subsection{The weak abelian fibration  $(M_{n_\bullet m_\bullet } (0), A_{n_\bullet m_\bullet}(0) , J_{n_\bullet m_\bullet}(0)$}\la{qw2}$\;$

Define  what we  may call  the subspace of  multi-weighted-traceless
characteristics by setting (recall that $a(1)$ is the trace-component of a characteristic)
\beq\la{dr}
A_{n_\bullet m_\bullet} (0):= \{ (a_1, \ldots, a_s)\,|\; \sum_k m_k a_k(1)=0\}
\subseteq A_{n_\bullet}.
\eeq
This is a vector subspace of codimension $h^0(C,D)=d-(g-1)$. Define 
$M_{n_\bullet m_\bullet} (0):= h_{n_\bullet}^{-1}(A_{n_\bullet m_\bullet} (0)) \subseteq M_{n_\bullet}$
(given its reduced structure; we are about to verify the statement associated with (\ref{c4}), so that,  a posteriori,
this pre-image is indeed  automatically reduced).

What follows is in direct analogy with the constructions in the proof of  Lemma \ref{lm1}, and in its re-mixed
version in  \S\ref{lm1r}.
 We have the cartesian square  diagram 
 \beq\la{c4}
 \xymatrix{
H^0(C,D)  \times  M_{n_\bullet m_\bullet} (0)  \ar[d]_{{\rm Id} \times h_{\n_\bullet}(0) } \ar[r]^{\hskip 14mm \frak q'}_{\hskip 14mm \sim} & M_{n_\bullet} \ar[d]^{h_{\bullet}} \\
H^0(C,D) \times  A_{n_\bullet m_\bullet} (0)   \ar[r]^{\hskip 14mm \frak q}_{\hskip 14mm \sim} & A_{n_\bullet},
}
 \eeq
with ${\frak q},{\frak q'}$ isomorphisms,
where:

\ben
\item
in analogy with (\ref{sub}), and by keeping in mind that here the entries $u_{k1}$ are not necessarily zero,
the map ${\frak q}$ is given by the assignment  sending
\[
\left(
\s, \left( u_{11}, \ldots, u_{1n_1}   \right),
\ldots
\left(  u_{s1}, \ldots, u_{sn_s}   \right)
\right),
\qquad
\mbox{subject to
$\sum_k m_k u_{k1}=0$,}
\]
to (having set $u_{k0}:=1$, for convenience)
\[
\left(
\left\{\sum_{j=0}^i {n_1-i+j\choose j} \s^j u_{1,i-j}\right\}_{i=1}^{n_1}, \ldots , 
\left\{\sum_{j=0}^i {n_s-i+j\choose j} \s^j u_{s,i-j} \right\}_{i=1}^{n_s}
\right);
\]
\item
$\frak q'$ is defined by the assignment
\[
\left( \s, \left( E_1,\phi_1 \right), \ldots, \left(E_s, \phi_s \right) \right)
\longmapsto
\left( \left( E_1,\phi_1+\s {\rm Id} \right), \ldots, \left(E_s, \phi_s  +\s {\rm Id} \right) \right).
\]
\een
As in the proof Lemma \ref{lm1}, a simple recursion yields the map inverse to $\frak q$, whereas the one inverse to 
$\frak q'$ is given by the assignment (rem: $n = \sum_{k} m_k n_k$)
 \[
\left\{ (E_k, \psi_k)  \right\}_{k=1}^s      
\longmapsto 
\left(  \frac{\sum_j  m_j {\rm tr}(\psi_j)}{n},  
\left\{ \left(E_k, \psi_k -\sum_j \frac{m_j}{n}{{\rm tr}(\psi_j)}   {\rm Id}\right) 
\right\}_{k=1}^s
\right)
.
\]

Finally, by setting $J_{n_\bullet m_\bullet} (0):= {J_{n_\bullet}}_{| A_{n_\bullet m_\bullet}(0)}$, we have the cartesian
square diagram with $\frak{q}''$ and $\frak{q}$ isomorphisms
\beq\la{e3}
\xymatrix{
H^0(C,D) \times J_{n_\bullet m_\bullet} (0) \ar[rr]^{\hskip 16mm \frak q''} \ar[d]_{{\rm Id} \times g_{n_\bullet m_\bullet}(0)} && J_{n_\bullet} \ar[d]^{g_{n_\bullet m_\bullet}} \\
H^0(C,D) \times A_{n_\bullet m_\bullet}(0) \ar[rr]^{\hskip 16mm \frak q}  && A_{n_\bullet},
}
\eeq
obtained in the same way as (\ref{c3}).

\subsection{Multi-variable $\delta$-regularity over the elliptic loci}\la{rt5}$\;$

Recalling  the definition of $S_\delta(J/A)$ in (\ref{bt}), we have the following identification of $\delta$-loci
 
\begin{pr}\la{l5}$\;$
${\frak q}^{-1} (S_\delta (J_{n_\bullet}/A_{n_\bullet}) ) = H^0(C,D) \times 
S_\delta (J_{n_\bullet m_\bullet}(0)/A_{n_\bullet m_\bullet}(0)).$
\end{pr}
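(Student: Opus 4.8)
The plan is to reduce the claim to the intrinsic nature of the $\delta$-invariant together with the cartesian square (\ref{e3}), in exact analogy with the way (\ref{efb}) was extracted from (\ref{c3}) in the proof of Proposition \ref{1q2}. Recall from (\ref{rf0}) and (\ref{ds}) that $S_\delta(J/A) = \{a \in A \mid \delta_a = \delta\}$ with $\delta_a := d_a^{\rm aff}(J)$, and that this invariant depends only on the fiber $J_a$, viewed as a commutative algebraic group over the residue field of $a$: it is the dimension of the affine part in the Chevalley devissage (\ref{cbr}) of that fiber. In particular, isomorphic fibers carry the same value of $\delta$.

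First I would observe that the horizontal isomorphism $\frak{q}''$ in (\ref{e3}) is an isomorphism of group schemes over $\frak{q}$. Indeed, by its construction (obtained as in (\ref{c3}), itself built from the line-direction translation of spectral-curve families in (\ref{c2})), $\frak{q}''$ is the pullback isomorphism induced on relative Picard by an isomorphism of the two spectral-curve families over $\frak{q}$; since pullback of line bundles is a homomorphism, $\frak{q}''$ respects the group law. Thus, for every $(\s, b) \in H^0(C,D) \times A_{n_\bullet m_\bullet}(0)$, the cartesian square (\ref{e3}) yields an isomorphism of commutative algebraic groups between the fiber of the left-hand vertical morphism over $(\s, b)$ and the fiber of the right-hand vertical morphism over $\frak{q}(\s, b)$. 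The former is $J_{n_\bullet m_\bullet}(0)_b$ (the $H^0(C,D)$-factor being inert), while the latter is $J_{n_\bullet, \frak{q}(\s, b)}$; hence $J_{n_\bullet m_\bullet}(0)_b \cong J_{n_\bullet, \frak{q}(\s, b)}$, and therefore
\[
\delta_b(J_{n_\bullet m_\bullet}(0)) = \delta_{\frak{q}(\s, b)}(J_{n_\bullet}) \qquad \mbox{for all } \s.
\]

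The set equality then follows formally: a point $(\s, b)$ lies in $\frak{q}^{-1}(S_\delta (J_{n_\bullet}/A_{n_\bullet}))$ iff $\delta_{\frak{q}(\s, b)}(J_{n_\bullet}) = \delta$, which by the displayed identity holds iff $\delta_b(J_{n_\bullet m_\bullet}(0)) = \delta$, i.e. iff $(\s, b) \in H^0(C,D) \times S_\delta (J_{n_\bullet m_\bullet}(0)/A_{n_\bullet m_\bullet}(0))$; this is precisely the assertion. The only point demanding care — and hence the main, if modest, obstacle — is that $\delta$ is a group-theoretic invariant, so one must know that (\ref{e3}) identifies the fibers as group schemes and not merely as varieties; this is exactly what the remark on the Picard-pullback origin of $\frak{q}''$ secures, and with it the argument is the verbatim multi-variable counterpart of the single-variable passage to (\ref{efb}).
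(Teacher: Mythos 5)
Your proof is correct and takes essentially the same route as the paper's: the paper's proof is a one-liner stating that, keeping in mind the stratification of $S_\delta(J_{n_\bullet}/A_{n_\bullet})$ by products of the $S_{\delta_k}(J_{n_k}/A_{n_k})$ with $\sum_k \delta_k = \delta$, the argument runs parallel to that of Proposition \ref{1q2} with (\ref{e3}) playing the role of (\ref{c3}), and this is exactly what you have spelled out. Your explicit observation that $\frak{q}''$ is an isomorphism of group schemes over $\frak{q}$ (being a Picard pullback along an isomorphism of spectral-curve families), so that fibers are identified as algebraic groups and the invariant $\delta$ is preserved, even renders the product-stratification remark dispensable.
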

\begin{proof}
Keeping in mind that $S_\delta (J_{n_\bullet}/A_{n_\bullet})$ is naturally stratified by
products of individual $S_{\delta_k}(J_{n_k}/A_{n_k})$ with $\sum_k \delta_k = \delta$,
the proof runs parallel to the one of Proposition \ref{1q2}, with (\ref{e3}) playing the role of (\ref{c3}).
\end{proof}

\begin{tm}\la{lm5}$\;$
The   weak abelian fibrations
\[
(M_n, A_n, J_n), \quad (M_n(0), A_n(0), J_n(0)), \quad (\check{M}_n, \check{A}_n, \check{J}_n),
\]
\[ 
(M_{n_\bullet}, A_{n_\bullet}, J_{n_\bullet}), \quad 
(M_{n_\bullet m_\bullet} (0), A_{n_\bullet m_\bullet}(0), J_{n_\bullet m_\bullet} (0))
\]
are $\delta$-regular  when restricted to their  respective elliptic loci
\[
A_n^{\rm ell}, \quad A_n^{\rm ell}(0):= A_n (0) \cap A_n^{\rm ell}, 
\quad  \check{A}_n^{\rm ell}:= \check{A}_n \cap A_n^{\rm ell}, 
\]
\[ 
A_{n_\bullet}^{\rm ell} :=   \prod_k A_{n_k}^{\rm ell}, \quad 
A_{n_\bullet m_\bullet}^{\rm ell}(0) :=  A_{n_\bullet m_\bullet}(0) \cap \prod_k A_{n_k}^{\rm ell}.
\] 
\end{tm}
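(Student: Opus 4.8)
The plan is to observe that Theorem \ref{lm5} is largely a recapitulation: the first four of the five listed fibrations have already been shown to be $\delta$-regular over their elliptic loci. Specifically, I would cite Theorem \ref{f4} for $(M_n,A_n,J_n)$, Proposition \ref{1q2} for $(M_n(0),A_n(0),J_n(0))$, Theorem \ref{95} for $(\check{M}_n,\check{A}_n,\check{J}_n)$, and the verification carried out in \S\ref{qwoz} for $(M_{n_\bullet},A_{n_\bullet},J_{n_\bullet})$. Thus the only genuinely new content is the $\delta$-regularity of the weighted-traceless fibration $(M_{n_\bullet m_\bullet}(0), A_{n_\bullet m_\bullet}(0), J_{n_\bullet m_\bullet}(0))$ over $A_{n_\bullet m_\bullet}^{\rm ell}(0)$, and I would concentrate the argument there.

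First I would check that this last triple is indeed a weak abelian fibration. Since $J_{n_\bullet m_\bullet}(0)$ is by definition the restriction of $J_{n_\bullet}$ to the linear subspace $A_{n_\bullet m_\bullet}(0) \subseteq A_{n_\bullet}$, the Tate module is simply the restriction of $T_{\oql}(J_{n_\bullet})$, hence polarizable; the stabilizers of the action coincide with those for $J_{n_\bullet}$, hence are affine; and the pure relative dimensions of the group scheme and of the Hitchin-type morphism drop by the same amount under restriction, so they continue to agree. This is precisely the reasoning already used in \S\ref{33} to pass from $J_n$ to $J_n(0)$ and to $\check{J}_n$.

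The heart of the matter is the $\delta$-regularity, and here the plan is to transport the codimension bound from the already-known fibration $(M_{n_\bullet},A_{n_\bullet},J_{n_\bullet})$ by means of the product structure. Proposition \ref{l5} identifies, via the isomorphism $\frak{q}$, the $\delta$-locus $S_\delta(J_{n_\bullet}/A_{n_\bullet})$ with $H^0(C,D) \times S_\delta(J_{n_\bullet m_\bullet}(0)/A_{n_\bullet m_\bullet}(0))$. Since $\frak{q}$ is an isomorphism and $A_{n_\bullet} \cong H^0(C,D) \times A_{n_\bullet m_\bullet}(0)$, the codimension of the $\delta$-locus inside $A_{n_\bullet m_\bullet}(0)$ equals its codimension inside $A_{n_\bullet}$; the latter is $\geq \delta$ by the $\delta$-regularity of $(M_{n_\bullet},A_{n_\bullet},J_{n_\bullet})$ established in \S\ref{qwoz}. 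I would also record that $\frak{q}$ carries $H^0(C,D) \times A_{n_\bullet m_\bullet}^{\rm ell}(0)$ onto $A_{n_\bullet}^{\rm ell}$, because translating each Higgs field by a scalar multiple of the identity merely translates the corresponding spectral curve in the fibre direction of $V(D)$ and so preserves geometric integrality; hence the codimension comparison restricts correctly to the elliptic loci. This is the exact analogue of the single-variable argument in Proposition \ref{1q2}, with (\ref{e3}) and Proposition \ref{l5} playing the roles of (\ref{c3}) and (\ref{efb}).

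The step I expect to require the most care --- though it becomes routine once the product structure is available --- is precisely this codimension-preservation under restriction to the linear slice $A_{n_\bullet m_\bullet}(0)$. A priori, intersecting the $\delta$-loci with a linear subspace could lower their codimension, since the slice need not meet them transversally; this is the subtlety flagged in \S\ref{stry}. What makes the argument go through is that the slicing is compatible with the product decomposition $\frak{q}$, so that the relevant $\delta$-loci are literally products of $H^0(C,D)$ with the sliced loci, whence their codimensions are unaffected by the slicing. Granting Proposition \ref{l5}, no transversality input is needed.
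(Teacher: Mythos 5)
Your proposal is correct and follows essentially the same route as the paper: the first four fibrations are quoted from Theorem \ref{f4}, Proposition \ref{1q2}, Theorem \ref{95} and \S\ref{qwoz}, while the fifth is handled via the product structures (\ref{c4}) and (\ref{e3}) together with the identification of $\delta$-loci in Proposition \ref{l5}, exactly as in the proof of Proposition \ref{1q2}. Your extra remark that $\frak{q}$ matches the elliptic loci (fiberwise translation of $V(D)$ preserves geometric integrality of spectral curves) is a detail the paper leaves implicit, and you justify it correctly.
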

  \begin{proof} We have already proved all the conclusions in the  single-variable case: we have displayed them for
  emphasis only. We have already observed in \S\ref{qwoz} that the single-variable case implies
  that  $(M_{n_\bullet}, A_{n_\bullet}, J_{n_\bullet})$
  is a weak abelian fibration  which is $\delta$-regular over its elliptic locus $A_{n_\bullet}^{\rm ell}$.
  
  \n
  By virtue of (\ref{c4}) and of (\ref{e3}), we see that 
  $(M_{n_\bullet m_\bullet} (0), A_{n_\bullet m_\bullet}(0), J_{n_\bullet m_\bullet} (0))$ is a weak abelian fibration
  as well, which, by virtue of Proposition \ref{l5}, is $\delta$-regular over its elliptic locus
  $A_{n_\bullet m_\bullet}^{\rm ell}(0)$ (cf. the proof of Proposition \ref{1q2}).
  \end{proof}

 \begin{rmk}\la{nt}
 The following claim \underline{does not} hold:
given  a point $a \in S_{n_\bullet m_\bullet}  \subseteq A_n$,  we can write   $a= \lambda_{n_\bullet m_\bullet} (a_1, \ldots a_s)$
for a suitable $s$-tuple $a_k \in A_{n_k}.$ This is true if $a$ is a closed point, but it fails in general.
This claim has been used in \ci[\S9, proof of main theorem]{ch-la}.
\end{rmk}

Corollary \ref{lm5b}, equation (\ref{sperema}) below  remedies    the minor inaccuracy in the proof of 
\ci[\S9, proof of main theorem]{ch-la} pointed out in Remark \ref{nt}. It also establishes
 the  $SL_n$-variant (\ref{speremb}) that we need in the course of the proof of Theorem \ref{maintm} in \S\ref{pmaintm}.

   \begin{cor}\la{lm5b} {\rm ({\bf Multi-variable $\delta$-inequalities})}
   Let $a \in A_n$ and let $(n_\bullet, m_\bullet) \in NM(s)$ be its type {\rm (\S\ref{qw2})}. 
   Then we have the following multi-variable $GL_n$ $\delta$-inequality
   \beq\la{sperema}
   d_a^{\rm ab} (J_n)
 \geq \sum_k \left(d_{h_{n_k}} - d_{A_{n_k}} \right)  
+ d_a.
\eeq
If, in addition, $a \in A_n(0)=\check{A}_n$,  then we have the following multi-variable $SL_n$ $\delta$-inequality
   \beq\la{speremb}
   d_a^{\rm ab} (J_n)
 \geq \sum_k \left(d_{h_{n_k}} - d_{A_{n_k}} \right)
+ [d-(g-1)] + d_a.
\eeq
   \end{cor}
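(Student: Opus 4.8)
The plan is to derive both inequalities from the multi-variable $\delta$-regularity of Theorem \ref{lm5}, the description (\ref{3030}) of the abelian-variety part of the Jacobian of a spectral curve, and the finiteness of the morphisms $\lambda_{n_\bullet m_\bullet}$. The first thing to settle is the difficulty recorded in Remark \ref{nt}: one cannot in general lift the (possibly non-closed) Zariski point $a$ to a Zariski point of $A_{n_\bullet}$. I would bypass this entirely by working with geometric points. Fix a geometric point $\ov{a}$ over $a$. The factorization $P_{\ov{a}} = \prod_k P_{a_k}^{m_k}$ of \S\ref{spcv} yields a geometric point $\ov{b} = (a_1, \ldots, a_s)$ of $A_{n_\bullet}$, where each $a_k$ is a geometric point of $A_{n_k}^{\rm ell}$ because the component $\Gamma_k = \m{C}_{k,\ov{a}}$ is integral. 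Let $b \in A_{n_\bullet}$ be the Zariski point underlying $\ov{b}$; by construction $\lambda_{n_\bullet m_\bullet}(b) = a$.

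First I would record two elementary identities. By (\ref{3030}) the abelian part of $J_{n,a} = {\rm Pic}^0(\m{C}_a)$ is $\prod_k {\rm Pic}^0(\w{\Gamma_k})$, while the same formula applied to each integral $\Gamma_k$ gives $d_{a_k}^{\rm ab}(J_{n_k}) = \dim {\rm Pic}^0(\w{\Gamma_k})$; the multiplicities $m_k$ play no role, since the abelian part only sees the normalizations. As $J_{n_\bullet,\ov{b}} = \prod_k J_{n_k,a_k}$, this gives
\[
d_a^{\rm ab}(J_n) \; = \; \sum_k d_{a_k}^{\rm ab}(J_{n_k}) \; = \; d_b^{\rm ab}(J_{n_\bullet}).
\]
Moreover, since $\lambda_{n_\bullet m_\bullet}$ is finite (\ref{lam}), its restriction $\ov{\{b\}} \to \ov{\{a\}}$ is a finite surjective morphism of integral varieties, so $d_b = d_a$.

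For the $GL_n$ inequality (\ref{sperema}) I would apply Lemma \ref{gdr}.(2) to the $\delta$-regular weak abelian fibration $(M_{n_\bullet}, A_{n_\bullet}, J_{n_\bullet})$ over $A_{n_\bullet}^{\rm ell}$ (Theorem \ref{lm5}) at the point $b$. Using $d_b(J_{n_\bullet}) = \sum_k d_{h_{n_k}}$ and $d_{A_{n_\bullet}} = \sum_k d_{A_{n_k}}$, the bound $d_b^{\rm ab}(J_{n_\bullet}) \geq d_b(J_{n_\bullet}) - d_{A_{n_\bullet}} + d_b$ becomes (\ref{sperema}) after inserting the two identities above.

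For the $SL_n$ inequality (\ref{speremb}), the extra summand $[d-(g-1)] = h^0(C,D)$ comes from passing to the weighted-traceless fibration. Since the trace of $\prod_k P_{a_k}^{m_k}$ equals $\sum_k m_k a_k(1)$, the hypothesis $a \in \check{A}_n$ forces this to vanish, so $\ov{b}$ lies in $A_{n_\bullet m_\bullet}(0)$, indeed in $A_{n_\bullet m_\bullet}^{\rm ell}(0)$. I would then apply Lemma \ref{gdr}.(2) to the $\delta$-regular fibration $(M_{n_\bullet m_\bullet}(0), A_{n_\bullet m_\bullet}(0), J_{n_\bullet m_\bullet}(0))$ of Theorem \ref{lm5} at $b$. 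Here $J_{n_\bullet m_\bullet}(0)$ is the restriction of $J_{n_\bullet}$, so $d_b(J_{n_\bullet m_\bullet}(0)) = \sum_k d_{h_{n_k}}$ and $d_b^{\rm ab}$ is unchanged; moreover $A_{n_\bullet m_\bullet}(0)$ has codimension $h^0(C,D)$ in $A_{n_\bullet}$ (\ref{dr}), so $d_{A_{n_\bullet m_\bullet}(0)} = \sum_k d_{A_{n_k}} - [d-(g-1)]$. The restriction of $\lambda_{n_\bullet m_\bullet}$ to the closed subvariety $A_{n_\bullet m_\bullet}(0)$ is again finite and still sends $b$ to $a$, so $d_b = d_a$ as before; assembling these gives (\ref{speremb}). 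The only genuinely delicate point, which this argument is designed to handle, is the replacement of $a$ by the geometric point $\ov{b}$ together with the fact that finiteness of $\lambda_{n_\bullet m_\bullet}$ preserves $\dim \ov{\{-\}}$; the remainder is bookkeeping with the relative dimensions already computed in \S\ref{qwoz} and \S\ref{qw2}.
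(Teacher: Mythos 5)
Your proposal is correct and takes essentially the same route as the paper: the paper likewise lifts $a$ to a Zariski point $\alpha$ in the fiber $\lambda_{n_\bullet m_\bullet}^{-1}(a)$ (your $b$; your geometric-factorization construction in fact justifies the paper's assertion that this fiber is non-empty), uses finiteness of $\lambda_{n_\bullet m_\bullet}$ to get $d_\alpha = d_a$, identifies $d_a^{\rm ab}(J_n) = d_\alpha^{\rm ab}(J_{n_\bullet})$ via (\ref{3030}), and then applies the $\delta$-regularity of Theorem \ref{lm5} over $A_{n_\bullet}^{\rm ell}$ and $A_{n_\bullet m_\bullet}^{\rm ell}(0)$ with the same dimension bookkeeping. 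Your device of passing to a geometric point $\ov{a}$, factoring $P_{\ov{a}} = \prod_k P_{a_k}^{m_k}$, and taking the underlying Zariski point is precisely the intended fix of the inaccuracy recorded in Remark \ref{nt}.
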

\begin{proof}
Let $a \in A_n$ and let $V(a):=\ov{\{a\}} \subseteq A_n$ be the associated integral closed subvariety.
Let $(n_\bullet, m_\bullet) \in NM(s)$ be the type of $a$. 
 Let $\alpha$ be any point in the non empty  fiber  $\lambda^{-1}_{n_\bullet m_\bullet} (a)$.
Then $d_a:=\dim (\ov{\{a\}}) = \dim (\ov{\{\alpha\}})=
d_\alpha$.

\n We choose an algebraic closure of $k(a)$ that contains the 
finite field extension $k(a) \subseteq k(\alpha)$.  
We can identify the curves $\cu{C}{\ov{\alpha}}= \cu{C}{\ov{a}, {\rm red}}$, so that the two curves have the same number $s$ of geometrically irreducible components. It follows  that $\alpha \in 
A_{n_\bullet}^{\rm ell}$. By virtue of (\ref{3030}), it also follows that $d_a^{\rm ab} (J_n) = d_{\alpha}^{\rm ab}(J_{n_\bullet})$.

\n
The $\delta$-regularity inequality for $J_{n_\bullet}$ over $A_{n_\bullet}^{\rm ell}$ implies that
$d_{\alpha}^{\rm ab}(J_{n_\bullet}) \geq d_{h_{n_\bullet}} - d_{A_{n_{\bullet}}} + d_\alpha$, and (\ref{sperema}) follows.

\n
Since $a$ has type $(n_\bullet, m_\bullet)$, we have that
$\alpha$ satisfies the weighted trace constraint (\ref{dr})  that defines  $A_{n_\bullet m_\bullet}^{\rm ell}$, so that
$\alpha \in A_{n_\bullet m_\bullet}^{\rm ell} (0)$.
Then  (\ref{speremb}) is proved in the same way as (\ref{sperema}) 
by using the $\delta$-regularity of $J_{n_\bullet m_\bullet} (0)$   over $A_{n_\bullet m_\bullet}^{\rm ell} (0)$,
and the facts that $d_{h_{n_\bullet}} = \sum_k d_{h_{n_k}}$,   and (cf. (\ref{dr})) $d_{A_{n_\bullet m_\bullet}}(0)  =
 \dim (A_{n_\bullet}) - h^0(C,D)= \sum_k d_{A_{n_k}} - [d- (g-1)]$.
\end{proof}

\section{Proof of the main Theorem \ref{maintm} on the $SL_n$ socle}\la{pfqa}$\;$

This \S\ref{pfqa} is devoted to the proof of our main Theorem \ref{maintm} on the $SL_n$ socle. 
\S\ref{si} collects some formulae.
\S\ref{pmaintm} contains the  proof of Theorem \ref{maintm}.

\subsection{A list of dimension formulae}\la{si}$\;$

We first list some dimensional formulae in the $GL_n$ case.
We set  $d_{M_n}:= \dim{M_n}$, $d_{A_n}:= \dim{A_n}$,  and $d_{h_n}:=
d_{M_n}- d_{A_n}.$ The dimension of $M_n$ is given by \ci[Prop. 7.1]{nitsure}; the dimension of 
$A_n=\oplus_{i=1}^n h^0(C, iD)$
is computed via Riemann-Roch;  the relative dimension $d_{h_n}$ is given by (\ref{rr}). We thus have
 \[
 d_{M_n} = n^2 d +1, \quad d_{A_n} = \frac{n(n+1)}{2} d - n(g-1),  
 \]
 \beq\la{ecz}
 \qquad d_{h_n} = \frac{n(n-1)}{2} d + n(g-1) +1, \qquad
d_{h_n} - d_{A_n} = -nd +2n(g-1) +1.
 \eeq
 
The corresponding formula for $SL_n$ follow easily, for example,  from the above, remembering that, in view of lemmata
\ref{lm1} and
\ref{lm2}, we have that
$\dim (M_n) = \dim (\check M_n) + h^0(D) + g$.
 \[
 d_{\check{M}_n} = n^2d-d, \quad d_{\check{A}_n} = \frac{n(n+1)}{2} d  -d - (n-1)(g-1), 
 \]
 \beq\la{ecce}
 d_{\check{h}_n} = \frac{n(n-1)}{2} d + (n-1)(g-1), \qquad 
 d_{\check{h}_n} - d_{\check{A}_n} = -(n-1)d +2(n-1) (g-1).
 \eeq
Recall that,
given $a \in A_n$,  we have been denoting the dimensions of $J_{n,a}, J_{n,a}^{\rm aff}$ and $J_{n,a}^{\rm ab}$
 by $d_a(J_n), 
d_a^{\rm aff}(J_n)$
and $d_a^{\rm ab}(J_n)$, respectively, and have been doing the same for    $\check{a}\in \check{A}_n \subseteq A_n$, $\check{J}_{n,a}$ etc.  (Recall that the Chevalley devissage is defined at geometric points, but the indicated dimensions
depend only on the underlying Zariski point; as it is about to become clear, it is better to keep track of Zariski points).

\subsection{Proof of  Theorem \ref{maintm}}\la{pmaintm}$\;$

Having done all the necessary preparation, the proof of Theorem \ref{maintm}  for the $SL_n$ socle, can now  proceed
parallel to the proof of Theorem \ref{ch-la-mt} for the $GL_n$ socle in \ci[\S9]{ch-la}.

 \n
 Let $a\in \check{A}_n$ belong to  ${{\socle}}({R  \check{h}_n }_*\oql)$. Apply the support inequality
 (\ref{99biz})  for the Zariski points in the socle:
 \beq\la{eq01}
  d_{\check{h}_n} - d_{\check{A}_n} + d_{a}  \geq d^{ab}_{a} (\check{J}_n).
 \eeq
 By  Lemma (\ref{azz}), we have $d^{ab}_{a} (\check{J}_n)=d^{ab}_{a} (J_n) -g$, so that
 \beq\la{eq02}
  d_{\check{h}_n} - d_{\check{A}_n} + d_{a}  \geq d^{ab}_{a} (J_n) -g.
  \eeq
  By combining (\ref{eq02}) with   (\ref{speremb}), we get
\beq\la{0112}
 d_{\check{h}_n} - d_{\check{A}_n} \geq \sum_{k=1}^s \left(d_{h_{n_k}} - d_{A_{n_k}} \right)
+ [d-(g-1)] -g.
\eeq

  \n
By using (\ref{ecce}) for  the l.h.s., and (\ref{ecz}) for each $n_k$ for the r.h.s., we  re-write (\ref{0112}) as follows
  \beq\la{t0}
  - (n-1)d +2(n-1)(g-1) \geq \sum_k \left[-n_k d + 2n_k(g-1)+1\right] + [d-(g-1)] -g.
  \eeq
  i.e. 
    \beq\la{t1}
  0 \geq \left(n-\sum_k n_k\right) \left[ d-2(g-1)\right] +(s-1).
  \eeq
  Since $d> 2(g-1)$ (by assumption) and $s\geq 1$ (by construction), we must have 
  $\sum_k m_k n_k =n=\sum_k n_k$ and $s=1.$ 
  
  \n
  The first condition forces  all $m_k=1$, so that the corresponding  geometric spectral curve $\m{C}_{\ov{a}}$  is reduced.

  \n
The second condition $s=1$ means that in addition to being reduced, the  geometric  spectral curve
  $\m{C}_{\ov a}$ must be  integral, i.e. $ a \in  \check{A}_n^{\rm ell}.$
\blacksquare

  We conclude  with two  remarks.

\begin{rmk}\la{poz} {\rm ({\bf Positive characteristic})}
Pierre-Henri Chaudouard has informed us that the main Theorem {\rm \ref{ch-la-mt}} for the $GL_n$ socle in \ci{ch-la}, should also hold over an algebraically
closed  field of positive characteristic bigger than $n$.  This should be the case in view of the fact that
one major obstacle in proving such theorem in positive characteristic had been the lack of the positive-characteristic analogue of the Severi inequality {\rm (\ref{98})}. At least as far as the corresponding inequality at the level of
the semiuniversal (miniversal) deformation for integral (even reduced) locally planar curves, this obstacle has been
removed in \ci[Thm 3.3]{me-ra-vi}. The restriction on the characteristic seems natural in view of the fact
that  the 
spectral covers have order $n$, and also because of formulae such as {\rm (\ref{sub})}. We did not verify whether
all of our arguments could be easily modified to yield the positive characteristic $(>n)$ cases of Theorems
{\rm \ref{ch-la-mt}} and  {\rm \ref{maintm}}, on the $GL_n$ and $SL_n$ socles.
\end{rmk}

\begin{rmk}\la{p} {\rm ({\bf $D=K_C$})}
The methods  of proofs of \ci{ch-la} for $GL_n$,  and of this paper for $SL_n$, do not work  in the very interesting case when $D=K_C$.
There is even more geometry at play in that symplectic/integrable case.  See \ci[\S11]{ch-la} for a short discussion
of the $D=K_C$ case.
\end{rmk}

\end{document}